\newcommand{\D}{\mathbb{D}}
\newcommand{\C}{\mathbb{C}}
\newcommand{\R}{\mathbb{R}}
\newcommand{\Z}{\mathbb{Z}}
\newtheorem{thm}{Theorem}[section]
\newtheorem{defn}[thm]{Definition}
\newtheorem{lemma}[thm]{Lemma}
\newtheorem{rem}[thm]{Remark}
\newtheorem{prop}[thm]{Proposition}
\newtheorem{conj}[thm]{Conjecture}
\title[Embedding Unicritical Connectedness Loci]{Embedding Unicritical Connectedness Loci}%As you see, in square brackets goes a shorter title intended to appear at top of odd pages
\author[Malavika Mukundan]{Malavika Mukundan}%In square brackets goes a shorter name intended to appear at top of even pages
\address{University of Michigan, Ann Arbor}
\email{malavim@umich.edu} 
\urladdr{http://www-personal.umich.edu/$\sim$ malavim/index.html}
\begin{document}
\selectlanguage{english} 
\begin{abstract}    In this article, for  degree $d\geq 1$, we construct an embedding $\Phi_d $ of the connectedness locus $\mathcal{M}_{d+1}$ of the polynomials $z^{d+1}+c$ into the connectedness locus of degree $2d+1$ bicritical odd polynomials.
\end{abstract}
\maketitle

\section{Introduction}
Relationships between different families of rational maps have been studied in various contexts in complex dynamics. In rational dynamics, quadratic polynomials of the form $z^2+c$ are the fundamental objects of study, and much of the field involves the study of the Mandelbrot set pioneered by Douady and Hubbard, in  \cite{Hubbarditer}, \cite{MR728980},\cite{MR812271} and \cite{MR1215974}, and developed by Milnor (\cite{MR1755445}), Lyubich and Dudko (\cite{https://doi.org/10.48550/arxiv.1808.10425}) and several others. In general, polynomials with a single critical point, normalized as $z^d+c$, and their connectedness loci $\mathcal{M}_d$ - that is, the set of parameters $c$ for which the filled Julia set is connected, commonly referred to as the Multibrot sets, have also been studied in \cite{Schleicher98onfibers}, \cite{MR3444240}, etc., and the properties of these sets have been used to conjecture and prove several results in both rational and transcendental dynamics.

    For example, the theory of matings and the work of Tan Lei, Rees, Shishikura and others (see \cite{Rees1990}, \cite{Shishikura2000OnAT}, \cite{lei_1992}) created a link between polynomials and rational maps, by combining two polynomials to create a rational map. This made it easier to study certain hyperbolic components in rational parameter spaces, as well as the structure of Julia sets of rational maps that arise as matings.
    
    \begin{figure}
    \centering
     \begin{subfigure}[b]{0.4\textwidth}
         \includegraphics[width=\textwidth]{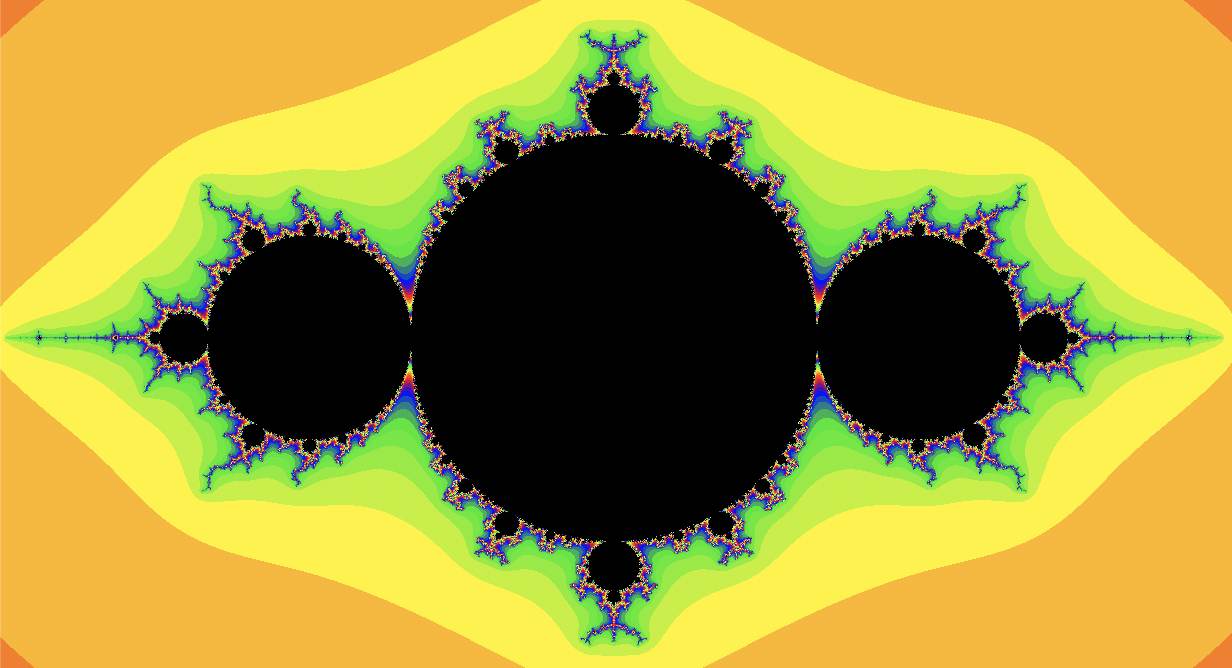}
         \caption{$\mathcal{CBO}_1$}
         \label{fig:odd1}
     \end{subfigure}
     \hfill
     \begin{subfigure}[b]{0.4\textwidth}
         \includegraphics[width=\textwidth]{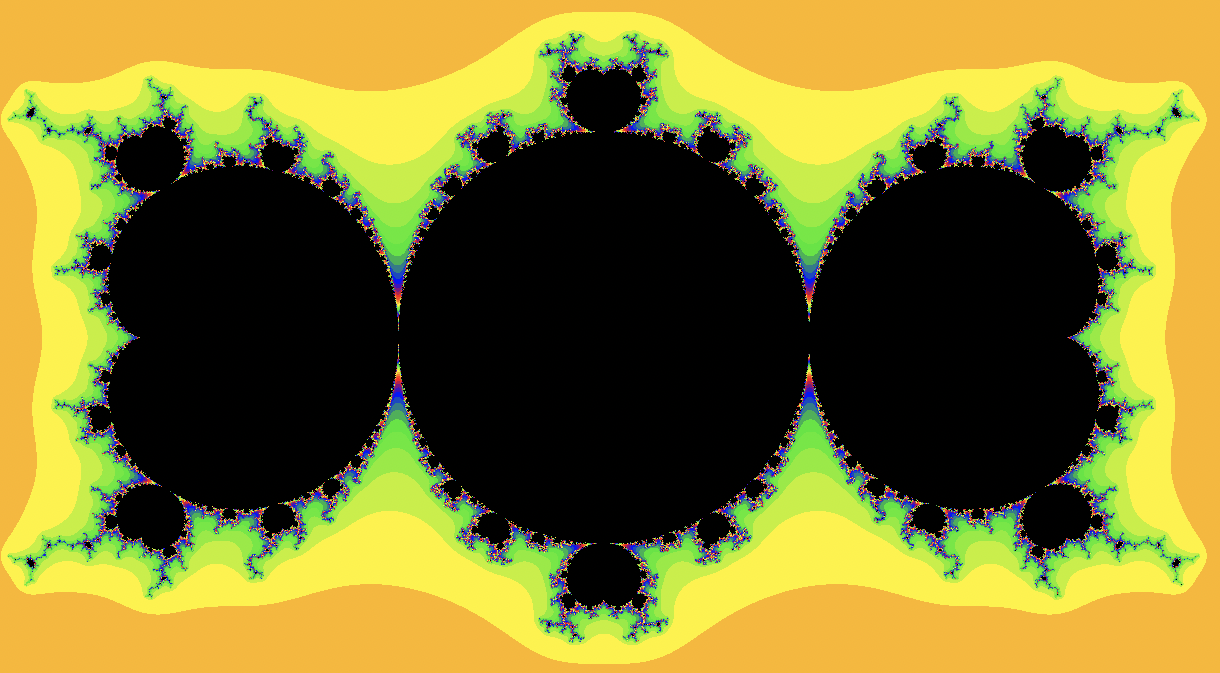}
         \caption{$\mathcal{CBO}_2$}
         \label{fig:odd2_1}
     \end{subfigure}\\     
     \begin{subfigure}[b]{0.4\textwidth}
         \includegraphics[width=\textwidth]{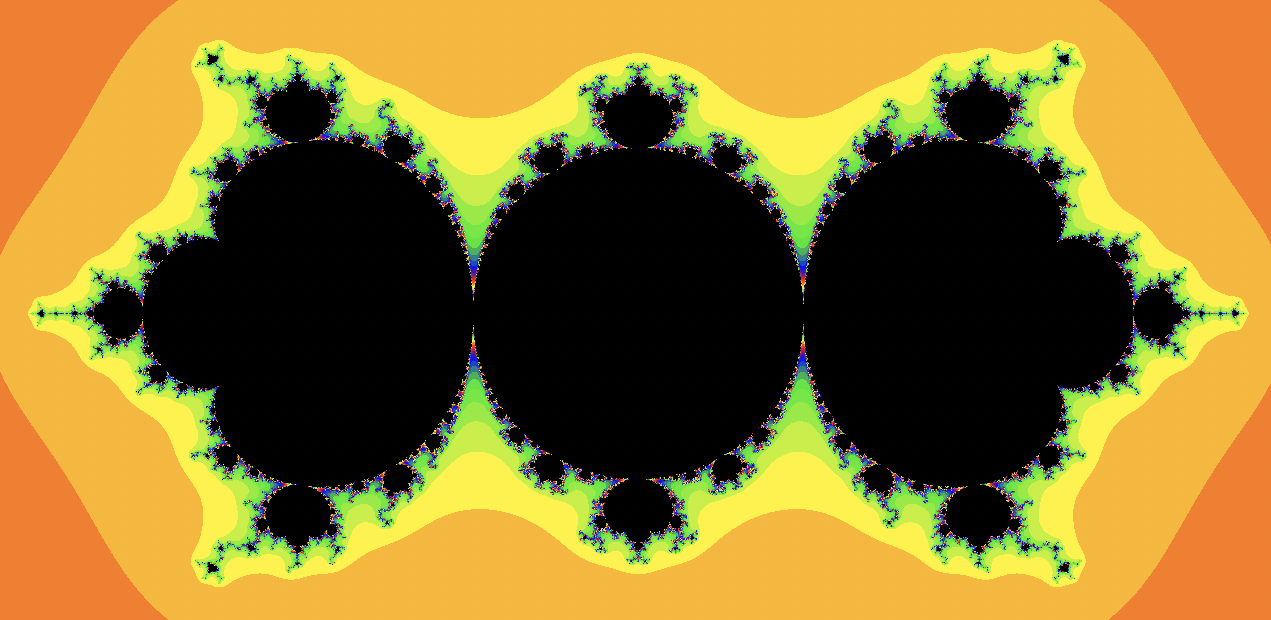}
         \caption{$\mathcal{CBO}_3$}
         \label{fig:odd3}
     \end{subfigure}\hfill
\begin{subfigure}[b]{0.4\textwidth}
         \includegraphics[width=\textwidth]{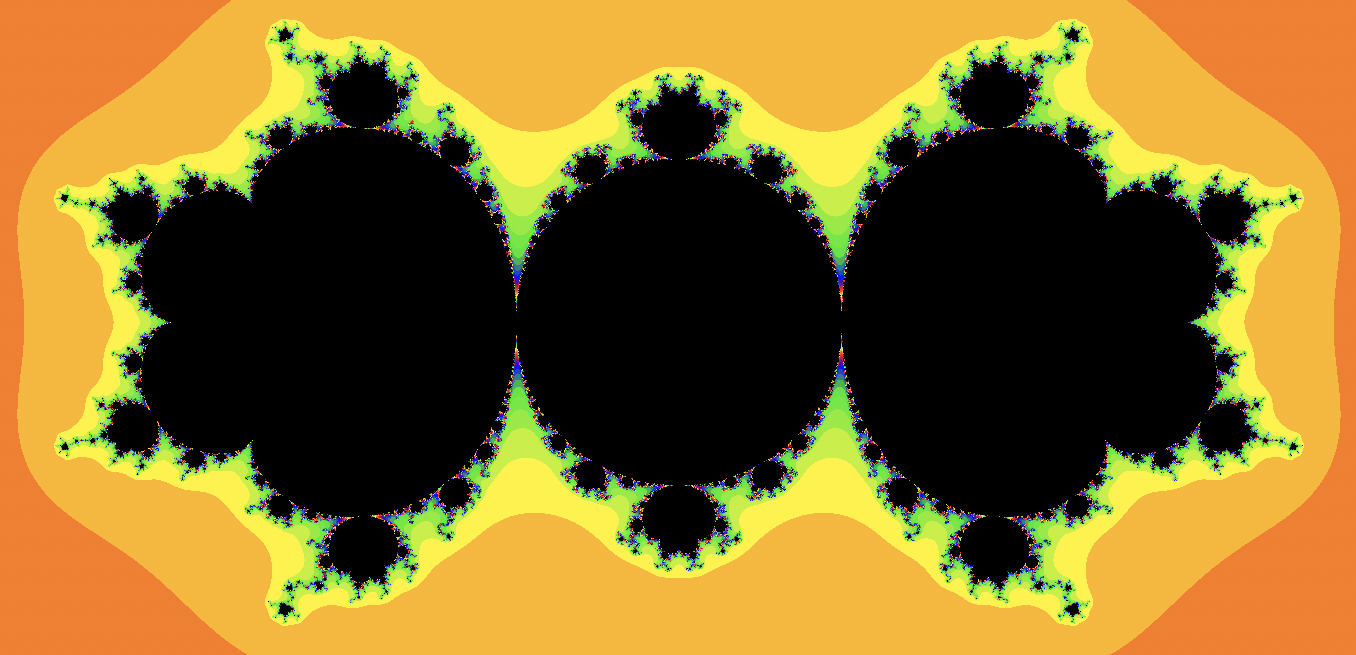}
         \caption{$\mathcal{CBO}_4$}
         \label{fig:odd4}
     \end{subfigure}\\     \begin{subfigure}[b]{0.4\textwidth}
         \includegraphics[width=\textwidth]{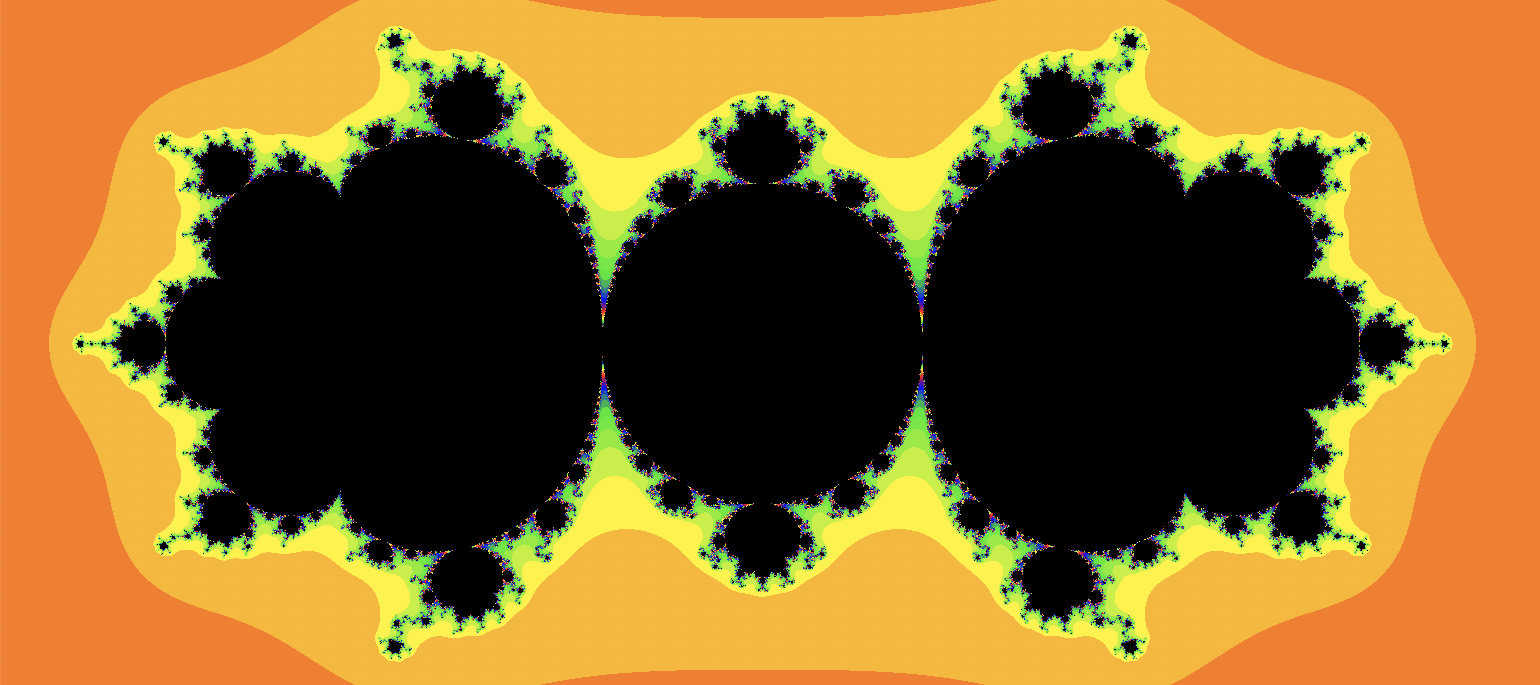}
         \caption{$\mathcal{CBO}_5$}
         \label{fig:odd5}
     \end{subfigure} 
\caption{The families $\mathcal{CBO}_d$ for $d=1,2,3,4,5$}
     \label{fig:odds}
\end{figure}

    Renormalization is a powerful tool in dynamics. It restricts certain holomorphic functions to smaller domains in which they ``look'' like some $z^d+c$ to make them easier to study. In his study of renormalizable maps in \cite{McMullen19972TM}, McMullen proved that unicriticals are universal in the sense that there are small copies of the Multibrot sets found in any holomorphic family of maps. 
    
    Branner and Douady constructed a continuous map from the basilica limb of the Mandelbrot set $\mathcal{M}_2$ to the rabbit limb (see \cite{10.1007/BFb0081395}). This was later extended by Branner and Fagella in \cite{MR1757453} into homeomorphisms between various limbs of the Mandelbrot set, and in a different spirit, by Dudko and Schleicher in \cite{MR2888182}. In \cite{riedl}, Riedl and Schleicher also construct a homeomorphism from a subset of any $\frac{p}{nq}$-limb of the Mandelbrot set to the $\frac{p}{q}$-limb. 
    
    We establish another such relationship between two holomorphic families -  unicritical polynomials and the family of symmetric polynomials which we introduce below and describe in detail in Section~\ref{section:prelim}. We take symmetric polynomials to mean polynomials that commute with some affine map $M$ satisfying $M^{\circ 2} = \text{Id}$. Symmetric cubic polynomials are encountered, for example, in the study of core entropy (see  \cite{gao2019core}). We focus on the more specific family of symmetric polynomials with exactly two critical points. As we will show in Section~\ref{section:prelim}, such a polynomial is affine conjugate to  
    $$p_{a,d}(z) = a\int_0^z\Big(1-\frac{w^2}{d}\Big)^ddw$$
for some $a \in \C^*$ and $d\geq 2$. For each $a$, 
    $p_{a,d}$ is an odd polynomial- that is, it commutes with $z \mapsto -z$. For a fixed $d$, we let $p_a = p_{a,d}$. We let $\mathcal{CBO}_d$ denote the set of $a \in \C^*$ such that $p_a$ has connected filled Julia set. This is a closed, compact connected subset of $\C$. In this article we shall prove the following.
    \begin{thm}\label{thm:maintheorem}
    For $d\geq 1$, there exists a continuous map $\Phi_d: \mathcal{M}_{d+1} \longrightarrow \mathcal{CBO}_d$ that is a homeomorphism onto its image.
    \end{thm}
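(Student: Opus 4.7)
The plan is to prove Theorem~\ref{thm:maintheorem} by a quasiconformal surgery that, for each $c \in \mathcal{M}_{d+1}$, constructs an odd-symmetric bicritical polynomial of degree $2d+1$, from which $\Phi_d(c) = a \in \C^*$ is read off after a normalization. The key structural observation is that each critical point of $p_a$ has local degree $d+1$, so that the natural local models of $p_a$ near its critical points are exactly the degree-$(d+1)$ unicritical maps $q_c(z) = z^{d+1} + c$. Since $\mathcal{M}_{d+1}$ is compact and $\mathcal{CBO}_d$ is Hausdorff, any continuous injection is automatically a homeomorphism onto its image, so it is enough to construct $\Phi_d$ and verify continuity and injectivity.

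For the construction, given $c \in \mathcal{M}_{d+1}$ I would build a quasi-regular map $F_c : \C \to \C$ of degree $2d+1$ by placing two symmetric polynomial-like copies of $q_c$ and gluing them to a standard model near $\infty$. Concretely, I would fix an odd-symmetric reference quasi-regular model of degree $2d+1$ with two critical points $\pm z_0$ of local degree $d+1$ and a superattracting fixed point at $\infty$, then replace the dynamics in a neighborhood of $+z_0$ by a polynomial-like copy of $q_c$, extend by odd symmetry to a copy at $-z_0$, and interpolate in the symmetric annular transition region using B\"ottcher coordinates of $q_c$. I then define a Beltrami coefficient $\mu_c$ by setting $\mu_c = 0$ on the basin of $\infty$ and pulling back the standard structure by every iterate of $F_c$ that lands in the two pasted copies. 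Because each orbit passes through the transition annulus at most once, $\|\mu_c\|_\infty < 1$; moreover $\mu_c$ is both $F_c$-invariant and invariant under $z \mapsto -z$.

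Applying the Measurable Riemann Mapping Theorem to $\mu_c$ with a normalization that commutes with $z \mapsto -z$ and prescribes behavior at $\infty$, I obtain a quasiconformal $\psi_c$ such that $P_c := \psi_c \circ F_c \circ \psi_c^{-1}$ is holomorphic, hence a polynomial of degree $2d+1$. By construction $P_c$ is odd and bicritical, so by the normal form in Section~\ref{section:prelim} it is affine-conjugate to a unique $p_a$; I set $\Phi_d(c) := a$. Continuity of $\Phi_d$ follows from continuous dependence of the pair $(F_c, \mu_c)$ on $c$ together with the parameter continuity of the Measurable Riemann Mapping Theorem. For injectivity, I would run the surgery in reverse: given $a = \Phi_d(c)$, the $\psi_c$-image of the pasted polynomial-like domain is a polynomial-like restriction of $p_a$ of degree $d+1$, and Douady--Hubbard straightening recovers $q_c$, so $c$ is uniquely determined by $a$. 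The image of $\Phi_d$ lies in $\mathcal{CBO}_d$ because the two critical orbits of $p_a$ are contained in the $\psi_c$-images of the pasted copies of $K_c$, which lie in the filled Julia set.

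The main obstacle, and likely the bulk of the paper, is making the surgery uniform in $c$. The reference model, the pasted disks, and the B\"ottcher-based interpolation must be chosen so that they vary continuously with $c$ while keeping the dilatation of $F_c$ uniformly bounded; this is delicate for $c \in \partial \mathcal{M}_{d+1}$, where $K_c$ may have empty interior and the B\"ottcher coordinates extend only up to prime ends, as well as at parabolic or Cremer parameters where the external ray combinatorics degenerate. A secondary delicate point is the reverse straightening used for injectivity, which must identify the polynomial-like restriction of $p_a$ canonically in order to yield a genuine bijection rather than a multi-valued correspondence; this is where Douady--Hubbard uniqueness and the odd symmetry of $p_a$ need to combine to pin down the choice.
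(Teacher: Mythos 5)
Your high-level plan (quasiconformal surgery producing an odd bicritical degree-$(2d+1)$ map, invariant Beltrami form, Measurable Riemann Mapping Theorem, straightening, then compactness of $\mathcal{M}_{d+1}$ to upgrade a continuous injection to an embedding) matches the paper's strategy in outline, and the compactness shortcut for the "homeomorphism onto image" step is legitimate. But the actual surgery you describe --- two \emph{disjoint} polynomial-like copies of $q_c$ pasted into an unspecified odd reference model and interpolated "near $\infty$" --- misses the idea that makes the construction work, namely the role of the $\beta$-fixed point. The paper cuts the dynamical plane of $f_c$ along the external ray $\mathcal{R}_0$ landing at $\beta$, and welds two slit copies of $\{G_c<\eta\}$ (one rotated by $\pi$) along that slit, so that $\beta$ becomes the odd-symmetric repelling fixed point $0$ of the resulting map, lying in its Julia set, with the two embedded copies of $K_c$ \emph{touching at $0$}. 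This is forced by the target: for $a$ in the image $\mathcal{CBO}_d^{(+,-)}$, the degree-$(d+1)$ renormalizations around the two critical points are fixed-point renormalizations whose domains each contain a linearizing neighborhood of $0$ and whose small filled Julia sets meet at $0$; moreover, outside the small copies the dynamics genuinely exchanges the two halves $F^L_{s(a)}$ and $F^R_{s(a)}$ (each half contains $d$ preimages of $0$, and the sectors beyond them map across to the other half). A reference model with two disjoint pasted disks and a $c$-independent transition region cannot reproduce this touching and crossing structure, and your proposal gives no prescription for how the $2d$ extra preimages of each pasted disk are distributed --- i.e., for the combinatorics of the complement of the copies --- so the construction is not determined and, as stated, does not land in $\mathcal{CBO}_d^{(+,-)}$. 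Relatedly, the discontinuities that the quasiconformal interpolation must repair are not in an annulus near $\infty$ but in the $d$ sectors at the preimages of $\beta$, where the welded map jumps between the two sheets.

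A secondary gap is continuity at $c\in\partial\mathcal{M}_{d+1}$. Continuous dependence of $(F_c,\mu_c)$ and parameter continuity of the integrating maps give continuity of the polynomial-like family $G_c$, but straightening is not known to be continuous for degree $\ge 3$ families, so this does not yet give continuity of $c\mapsto a$. The paper closes this by a closed-graph argument combined with a rigidity lemma (quasiconformally equivalent $p_a,p_{a'}$ with $a\in\partial\mathcal{CBO}_d$ and normalized conjugacy are equal), together with an approximation by Misiurewicz parameters to ensure the limit lies on $\partial\mathcal{CBO}_d$; some such rigidity input is unavoidable and is absent from your sketch.
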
Our proof is along the lines of Douady and Branner's use of quasiconformal surgery in \cite{10.1007/BFb0081395}. We shall perform a quasiconformal surgery along a $\beta-$ fixed point and its pre-images. The map $\Phi_d$ is natural in the sense that its inverse can be described by a renormalization operator on a subset of $\mathcal{CBO}_d$. We shall also give a complete description of the image under $\Phi_d$ (see Section~\ref{section:imgdefn}).
    
    Although we do not provide details here, our construction holds in the following generality:
    \begin{thm}
    For any integer $k\geq 2$, there exists a continuous map from $\mathcal{M}_{d+1}$ to the collection of $a \in \C^*$ such that $a {\displaystyle \int_{0}^{z} } \Big(1-\frac{w^k}{d}\Big)^ddw$ has connected Julia set, that is a homeomorphism onto its image. 
    \end{thm}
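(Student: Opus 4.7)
The plan is to replicate the quasiconformal surgery used to establish Theorem~1.1, but in a $\mathbb{Z}/k\mathbb{Z}$-equivariant setting rather than the $\mathbb{Z}/2\mathbb{Z}$-equivariant (odd) setting. First I would set up the family properly: differentiating $q_{a,d,k}(z) := a\int_0^z(1-w^k/d)^d\,dw$ gives $q_{a,d,k}'(z) = a(1-z^k/d)^d$, so the critical set consists of the $k$ points $d^{1/k}\zeta^j$ (with $\zeta = e^{2\pi i/k}$), each of multiplicity $d$, and the degree is $kd+1$. The substitution $w = \zeta v$ in the integral shows $q_{a,d,k}(\zeta z) = \zeta\, q_{a,d,k}(z)$, so each map commutes with the order-$k$ rotation $M(z) = \zeta z$. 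An argument analogous to Section~\ref{section:prelim} should identify $\{q_{a,d,k}\}_{a\in\C^*}$, up to affine conjugacy, with polynomials of degree $kd+1$ commuting with a nontrivial affine map of order $k$ and possessing exactly $k$ critical points. Denote by $\mathcal{CBO}_{d,k}$ the corresponding connectedness locus.

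Given $c \in \mathcal{M}_{d+1}$, I would build an equivariant model map by performing surgery along the $\beta$-fixed point and its orbit under $M$. For a reference value $a_0$ chosen so that $q_{a_0,d,k}$ has a nice $\beta$-type repelling fixed point, the $k$ external rays $R_{j/k}$ for $j=0,\dots,k-1$ land on the $M$-orbit $\{\zeta^j \beta\}$ of the $\beta$-fixed point and, together with their preimages, cut the plane into $k$ congruent sectors, each containing exactly one critical point of $q_{a_0,d,k}$. On one such sector I would replace the dynamics with that of $f_c(z) = z^{d+1}+c$, using a quasiconformal model that glues $f_c$ to $q_{a_0,d,k}$ in a collar neighborhood bounded by the relevant external ray pieces, and then extend this modification to the other $k-1$ sectors by pushing forward with $M^j$. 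This yields a quasiregular map $F_c$ on $\C$ that commutes with $M$ and is holomorphic outside a collection of collars.

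The conclusion step is standard: one defines an $F_c$-invariant almost-complex structure $\mu_c$ by pulling back the standard structure from the region where $F_c$ is already holomorphic, spreading it equivariantly across iterated preimages. By construction $\mu_c$ is $M$-invariant and has bounded dilatation, so the Measurable Riemann Mapping Theorem produces a quasiconformal $\varphi_c$ with $\varphi_c^*\mu_0 = \mu_c$. Normalizing $\varphi_c$ equivariantly (using the $M$-invariance to pin it down up to scaling), the straightened map $\varphi_c \circ F_c \circ \varphi_c^{-1}$ is a polynomial of degree $kd+1$ commuting with $M$ and having exactly $k$ critical points, hence lies in the family $\{q_{a,d,k}\}$. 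This defines $\Phi_{d,k}(c) := a(c) \in \mathcal{CBO}_{d,k}$. Continuity of $c \mapsto a(c)$ follows from continuous dependence of the Beltrami coefficients on $c$ together with continuity of the MRMT solution, and injectivity follows from the fact that $\Phi_{d,k}$ has a one-sided inverse given by renormalizing $q_{a,d,k}$ in a neighborhood of the critical point selected by the sector decomposition, yielding a degree-$(d+1)$ polynomial-like map conjugate to $f_c$.

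The main obstacle, as in the $k=2$ case, is verifying that the $\beta$-fixed point and the supporting external ray combinatorics behave as expected throughout the family: for $k \geq 3$ the fixed point of $M$ is the isolated origin rather than an invariant axis, so one must work harder to establish that the $k$ rays of angle $j/k$ land on an $M$-orbit cleanly separating the $k$ critical points, and that these rays persist under surgery so that the straightened polynomial inherits the same combinatorial structure. Once this combinatorial step is secured, the equivariance of both the model map and the Beltrami coefficient is essentially automatic from the symmetry of the construction, and the remainder of the argument transcribes the proof of Theorem~\ref{thm:maintheorem} with trivial modifications.
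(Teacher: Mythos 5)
The paper never writes out a proof of this theorem: it states that the construction of Theorem~1.1 ``holds in this generality,'' the intended generalization being to glue $k$ cyclically ordered copies of the slit plane $W_0\setminus\mathcal{R}_0$ around the $\beta$-fixed point of $f_c$ (a $k$-fold branched cover in place of the double cover), so that the $k$ copies of $\beta$ are identified to a single point which becomes $0$, the fixed point of $M(z)=\zeta z$, at which the $k$ rays of angle $j/k$ land. Your description of the target family is correct (degree $kd+1$, $k$ critical points of multiplicity $d$, commuting with the order-$k$ rotation), and the endgame — invariant Beltrami form, Measurable Riemann Mapping Theorem, straightening, equivariant normalization — is the right one. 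But your surgery is not the paper's, and as described it has genuine gaps.

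First, the combinatorics are wrong for $k\geq 3$: you assert that the $k$ rays $R_{j/k}$ land on the $M$-orbit $\{\zeta^j\beta\}$ and cut the plane into $k$ sectors each containing one critical point. If the landing points are $k$ distinct finite points, the union of the rays (together with $\infty$) is a tree and separates nothing; to get $k$ sectors all $k$ rays must land at a \emph{common} point, which by $M$-equivariance can only be the $M$-fixed point $0$, forcing $0\in J$. This is exactly the structure the paper obtains for $k=2$ ($\mathcal{R}_0$ and $\mathcal{R}_{1/2}$ landing at $0$), and it is an \emph{output} of the gluing, not a hypothesis one can impose on a reference map. Second, the direction of your surgery is inverted relative to the paper's: you modify a base map $q_{a_0,d,k}$ by ``replacing the dynamics'' in each sector with $f_c$, but a sector bounded by two fixed rays is not dynamically self-contained — substantial parts of each sector map onto the other sectors (precisely the phenomenon the paper uses to define the inverse $\Psi_d$) — so the replacement is not well-defined without the full cut-and-reglue apparatus, the result would a priori depend on $a_0$, and continuity at $c\in\partial\mathcal{M}_{d+1}$ (where holomorphic motions are unavailable) is not addressed. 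Finally, ``a quasiconformal model that glues $f_c$ to $q_{a_0,d,k}$ in a collar'' conceals the one genuinely analytic step of the paper: the boundary expansion of the Riemann map $\widetilde{h}$ at $\beta$ (Proposition~3.1) and the interpolation of Lemma~3.2 between the linear boundary map $\mu_{d+1}$ and $\hat{h}$, which is what yields bounded dilatation. Without an analogue of that estimate there is no invariant almost complex structure to integrate, so the central technical content of the proof is missing rather than ``essentially automatic.''
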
The family $p_{a,d}$ is interesting in its own right: as $d\longrightarrow \infty$, $p_{a,d}(z) \longrightarrow a{\displaystyle \int_{0}^{z}} e^{-w^2}dw$ locally uniformly on $\C$. The limit function is entire, odd, has two asymptotic values $\pm \frac{a\sqrt{\pi}}{2}$ and no critical points. It is called an ``error'' function (see \cite{nevanlinna1970analytic} for an introduction). Error functions belong to the larger Speiser class- the family of entire functions with finitely many critical and asymptotic values. This family is studied in \cite{goldberg_keen_1986}, \cite{AIF_1992__42_4_989_0} and several others. 
    
    The simplest of the Speiser class is the family of exponential functions. A lot of the analysis of exponential functions is a direct application of the tools used in the analysis of unicritical polynomials, normalized as $\lambda(1+\frac{z}{d})^d$ and using the fact that they converge to $\lambda \exp{z}$ as $d \longrightarrow \infty$. This is a theme that is explored in \cite{MR1785056}. Our work in progress aims to carry out a similar analysis for the error functions $a{\displaystyle \int_0^z}e^{-w^2}dw$. This paper presents a structural similarity between the  polynomials approximating exponential functions, and the polynomials $p_{a,d}$ that approximate error functions, and prompts us to make the following conjecture:
    \begin{conj}
    Let $E_c(z) = \exp z+c$, and $\mathscr{E}_a(z)  = a {\displaystyle \int_{0}^{z}} e^{-w^2}dw$.
    There exists a continuous map from $\big\{c \in \C| \{E_c^{\circ n}(c)\}_{n \geq 0} \text{ is bounded}\big\}$ to the set $\big\{a \in \C^*| \{\mathscr{E}^{\circ n}_a(a)\}_{n\geq 0} \text{ is bounded}\big\}$ that is a  homeomorphism onto its image.
    \end{conj}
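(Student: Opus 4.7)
The plan is to obtain the desired map as a suitable limit of the embeddings $\Phi_d:\mathcal{M}_{d+1}\to\mathcal{CBO}_d$ furnished by Theorem~\ref{thm:maintheorem}. The starting point is the classical observation developed in \cite{MR1785056} that, after an appropriate reparametrization, the unicritical polynomials $z^{d+1}+c$ converge dynamically to $E_c$, together with the parallel fact highlighted in this paper that $p_{a,d}\to\mathscr{E}_a$ locally uniformly on $\C$. One therefore expects the connectedness loci $\mathcal{M}_{d+1}$ to converge, in a Hausdorff/Carath\'eodory-kernel sense, to $\mathcal{M}_{\exp}:=\{c\in\C:\{E_c^{\circ n}(c)\}_{n\ge 0}\text{ bounded}\}$, and $\mathcal{CBO}_d$ to $\mathcal{M}_{\mathrm{Err}}:=\{a\in\C^*:\{\mathscr{E}_a^{\circ n}(a)\}_{n\ge 0}\text{ bounded}\}$. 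Granted this, the sought-after map would be defined by $\Phi_\infty(c):=\lim_d \Phi_d(c_d)$ for any admissible sequence $c_d\to c$ with $c_d\in\mathcal{M}_{d+1}$ under the matched parameter change.

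Implementing this requires three ingredients. First, a precise parameter correspondence $c\leftrightarrow c_d$ matching dynamics on fundamental domains near the two asymptotic values of $E_c$, and an analogous correspondence on the error-function side; the natural normalization is to preserve a chosen non-escaping periodic orbit (for instance, the $\alpha$-fixed point). Second, a uniform control on the quasiconformal surgery used in the proof of Theorem~\ref{thm:maintheorem}: the surgery performed at the $\beta$-fixed point and its preimages must admit a dilatation bound independent of $d$, so that the associated straightening maps form a normal family. Third, an identification of the limit, for which the cleanest route is to exploit that $\Phi_d^{-1}$ is realized by a renormalization operator on a subset of $\mathcal{CBO}_d$; one would construct a limiting renormalization operator on the corresponding subset of $\mathcal{M}_{\mathrm{Err}}$ and verify directly that it inverts $\Phi_\infty$. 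Continuity and injectivity should then follow from equicontinuity of the $\Phi_d$, density of the hyperbolic parameters, and a standard holomorphic-motion argument on hyperbolic components.

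The principal obstacle is genuinely dynamical rather than formal. The transcendental maps $E_c$ and $\mathscr{E}_a$ have asymptotic values rather than critical values, so their escaping sets carry logarithmic tracts with no polynomial analogue, and the Douady--Branner surgery must be reinterpreted in a way that is compatible with these tracts, most plausibly by performing the cut on a truncated logarithmic model and then transporting it back through the exponential coordinate. Moreover, neither $\mathcal{M}_{\exp}$ nor $\mathcal{M}_{\mathrm{Err}}$ is compact, so controlling $\Phi_d$ near infinity in parameter space is subtle: parameters $c_d\in\mathcal{M}_{d+1}$ may escape under the polynomial rescaling while the associated $c$ remains bounded in the exponential coordinate, and one must rule out the collapse or splitting of $\Phi_d(c_d)$ in this regime. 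It is exactly this delicate passage to the transcendental limit, together with the need to verify that kernel convergence of parameter spaces is preserved under $\Phi_d$, that places the statement squarely in the realm of work in progress rather than as a direct corollary of the surgery developed here.
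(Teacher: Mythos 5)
This statement is a conjecture in the paper, not a theorem: the authors explicitly write that they ``do not \ldots address error functions in this article,'' and they offer no proof. What you have written is likewise not a proof but a research program, and you say as much in your final sentence. So the honest assessment is that there is a genuine gap --- indeed the entire argument is a gap, since every one of your three ``ingredients'' is an unproved assertion whose difficulty is comparable to the conjecture itself. Concretely: (1) the parameter correspondence $c \leftrightarrow c_d$ is never defined; $z^{d+1}+c$ does not converge to $e^z+c$, so one must pass to a normalization such as $\lambda(1+\frac{z}{d})^d$, and even then the convergence of the full non-escaping-orbit loci (as opposed to hyperbolic components) in a Hausdorff or kernel sense is not known --- the results of \cite{MR1785056} do not supply it. (2) The claim that the surgery of Theorem~\ref{thm:maintheorem} admits a dilatation bound independent of $d$ is asserted, not proved; in the paper's construction the quasiconformal interpolation on $\Delta_{q'}\setminus\overline{\Delta_p}$ depends on $d$ through the boundary map $\mu_{d+1}$ and the exponent $\frac{1-2q'}{2p}$ in Proposition~\ref{boundarybeh}, and the number of modified sectors grows with $d$, so uniformity is far from automatic. (3) Even granting pointwise convergence of $\Phi_d(c_d)$, the passage to continuity and injectivity of the limit via ``equicontinuity and density of hyperbolic parameters'' fails without compactness, which neither transcendental locus possesses.

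Your diagnosis of the obstacles (asymptotic values in place of critical values, logarithmic tracts, non-compactness of the parameter loci) is accurate and agrees with why the paper leaves this as a conjecture; the paper's own stated evidence is combinatorial (an embedding of postsingularly finite exponentials into postsingularly finite error functions), which is a different and more modest starting point than the limiting-surgery scheme you propose. But a correct identification of the difficulties is not a resolution of them, and no step of your outline can be checked as it stands.
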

    There is some evidence to show that this is reasonable; work in progress indicates that it may be possible to embed postsingularly finite exponential functions into the collection of postsingularly finite error functions in a combinatorially meaningful manner. We do not, however, address error functions in this article.
    
    The paper is organized as follows. In Section~\ref{section:prelim}, we introduce symmetric polynomials and establish some of their basic properties, provide motivation for Theorem~\ref{thm:maintheorem} while laying out our proof strategy, and describe the image of $\Phi_d$. In Sections~\ref{section:defn} and ~\ref{section:continuity} respectively, we define $\Phi_d$ and prove that it is continuous. We end in Section~\ref{section:injectivity} by constructing a continuous inverse for $\Phi_d$ on its image.
 \subsection*{Acknowledgements.}The author is indebted to John Hubbard and Sarah Koch for their continued guidance throughout this study, and to Dierk Schleicher for helping widen the horizons of the author's research perspective. Special thanks to Jack Burkart, Alex Kapiamba, Leticia Pardo-Sim\'{o}n and Vasiliki Evdoridou for helpful conversations and resources, and to Lukas Geyer, N\'{u}ria Fagella, Lasse Rempe, Laurent Bartholdi, Joanna Furno, Giulio Tiozzo, Eriko Hironaka, David Mart\'{i}-Pete, Mikhail Hlushchanka, Nikolai Prochorov and others for their time and insightful comments. The author also thanks  Daniel Stoll for an introduction to  the Mandel software package.

This material is based upon work supported by the National Science Foundation under Grant No.\ DMS-1928930
while the author was in residence at the Mathematical Sciences Research Institute in Berkeley, California, during
the Spring 2022 semester.
\begin{figure}
    \centering
     \begin{subfigure}[b]{0.4\textwidth}
         \centering
         \includegraphics[width=\textwidth]{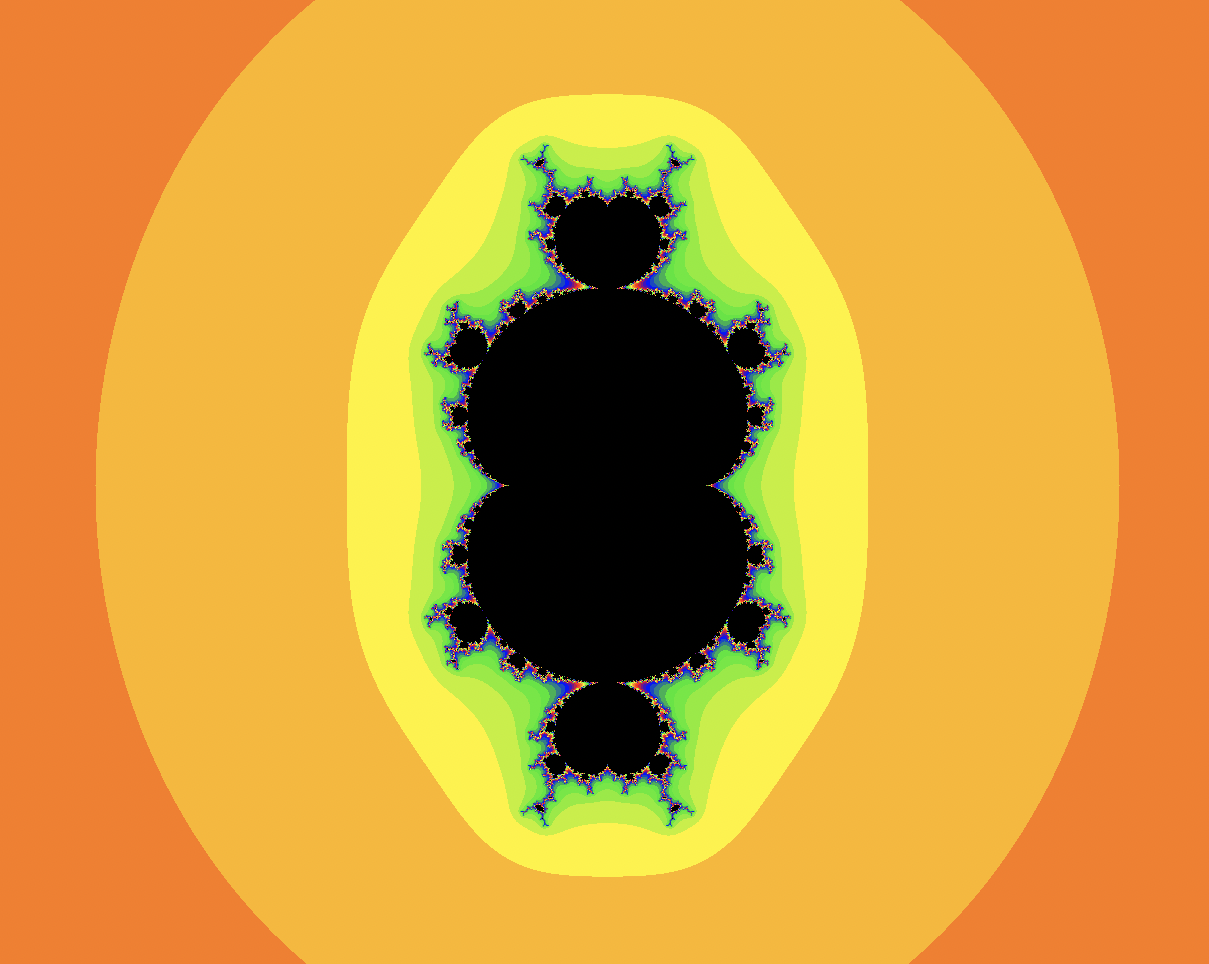}
         \caption{$\mathcal{M}_3$}
         \label{fig:m3}
     \end{subfigure}
     \hspace{5pt}
     \begin{subfigure}[b]{0.4\textwidth}
         \centering
         \includegraphics[width=\textwidth]{Images/odd2.png}
         \caption{$\mathcal{CBO}_2$}
         \label{fig:odd2_2}
     \end{subfigure}\\ 
     \begin{subfigure}[b]{0.5\textwidth}
         \centering
         \includegraphics[width=\textwidth]{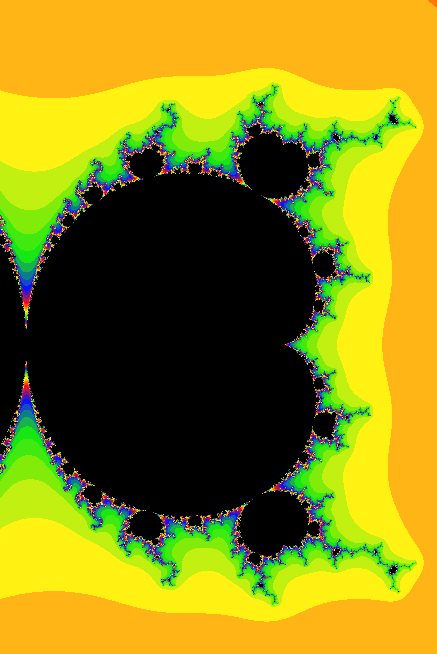}
         \caption{A portion of $\mathcal{CBO}_2$. The cut point on the mid-left is $a=1$. Note the resemblance to $\mathcal{M}_3$}
         \label{fig:odd2_right}
     \end{subfigure}
     \label{fig:fig2}
     \caption{The figure on the top left is the unicritical locus $\mathcal{M}_3$. The figure on the top right is the locus $\mathcal{CBO}_2$ of odd bicritical polynomials of degree $5$. The figure at the bottom zooms in on the right of the figure on the top right- we will show that this region contains a copy of $\mathcal{M}_3$ }
\end{figure}
\section{Preliminaries}\label{section:prelim}
For an introduction to polynomial dynamics and the Multibrot sets, see \cite{MR1755445}, \cite{MR2193309}, \cite{MR3675959} and \cite{MR3444240}.
\subsection{Introduction}
We call a polynomial  $f$ of degree $>1$ symmetric if it commutes with an affine map $M$ that satisfies $M^{\circ 2}(z) = z$. $M$ is of the form $M(z) = -z+b$ for some $b \in \C$, and we may conjugate $M$ by a translation $\tau$ so that $\tau^{-1}\circ M \circ \tau(z) = -z$. We have
\begin{align*}
   \tau^{-1}\circ f \circ \tau(z) & = \tau^{-1}\circ (M \circ f \circ M^{-1} )\circ \tau(z) \\& = (\tau^{-1}\circ M \circ \tau) \circ (\tau^{-1} \circ f \circ \tau) \circ (\tau^{-1} \circ M^{-1}\circ \tau)(z) \\& = -(\tau^{-1} \circ f \circ \tau)(-z)
\end{align*}
That is, $\tau^{-1} \circ f \circ \tau$ is an odd polynomial. Therefore, every symmetric polynomial contains an odd polynomial in its affine conjugacy class. We recall that an odd polynomial has only odd degree terms.
\subsection{Bicritical odd polynomials} $f$ is bicritical if it has, upto multiplicity, exactly two critical points on the plane. Let $f$ be a bicritical odd polynomial of degree $2d+1$, with $d\geq 1$. The Riemann Hurwitz formula shows that $f$ has local degree $d+1$ at both critical points. Furthermore, the critical points are of the form $\pm x$ for some $x \in \C^*$. 
Let $\phi(z) = kz$ be such that $\phi(\{x,-x\}) = \{\sqrt{d},-\sqrt{d}\}$. Then there exists a constant $a \in \C^*$ such that
\begin{align*}
(\phi \circ f \circ \phi^{-1})'(z) & = a\Big(1-\frac{z^2}{d}\Big)^d
\end{align*}
Therefore, 
\begin{align*}
    \phi \circ f \circ \phi^{-1}(z) & = a\int_0^z\Big(1-\frac{w^2}{d}\Big)^ddw
\end{align*}
\noindent For $a \in \C^*$, let 
\begin{align*}
    p_a(z) & = a\int_0^z \Big(1-\frac{w^2}{d}\Big)^ddw
\end{align*}

It is evident that $p_{a}$ is affine conjugate to $p_{a'}$ if and only if $a = a'$. Therefore, the space of bicritical odd polynomials modulo conjugation by scaling (or, the space of symmetric bicritical polynomials modulo affine conjugation) is the  family $a \mapsto p_a$ over $\C^*$. We shall denote this family $\mathcal{BO}_d$, and let 
\begin{align*}
    \mathcal{CBO}_d = \{a: K_{p_a} \text{ is connected}\}
\end{align*}
Figure~\ref{fig:odds} illustrates $\mathcal{CBO}_d$ for $d=1,2,3,4,5,19$.

We consider the part of $\mathcal{CBO}_2$ illustrated in Figure~\ref{fig:odd2_right}, and present some of the Hubbard trees of postcritically finite polynomials in this region, in Figure~\ref{fig:trees}.
The pictures indicate a relationship between $\mathcal{M}_{3}$ and $\mathcal{CBO}_2$, and in general, between $\mathcal{M}_{d+1}$ and $\mathcal{CBO}_d$. 
\begin{figure}
\centering
    \begin{subfigure}[b]{0.5\textwidth}
    \centering
         \includegraphics[width=\textwidth]{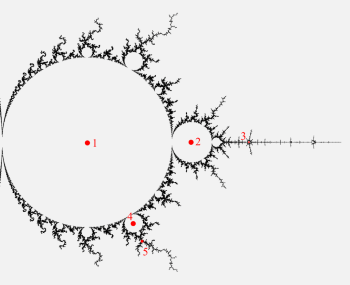}
         \caption{A section of $\mathcal{CBO}_1$}
         \label{fig:odd1_right}
    \end{subfigure}\\
    \begin{subfigure}[b]{0.7\textwidth}
    \centering
         \includegraphics[width=\textwidth]{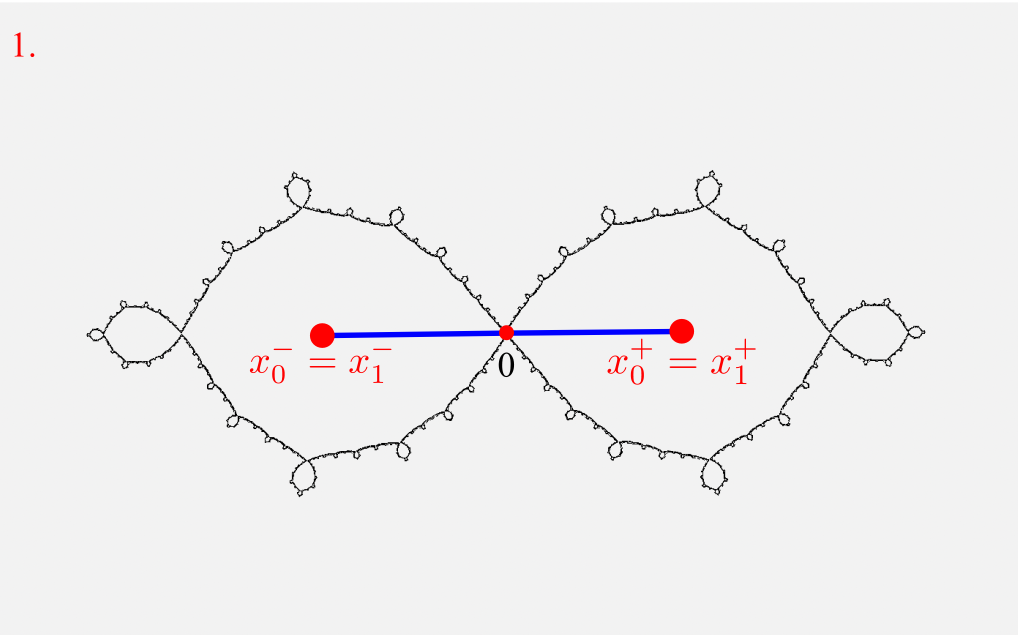}
         \caption{The $(+,-)$ type $z\mapsto z^2$}
         \label{fig:maincardioid}
    \end{subfigure}\\
     
    \begin{subfigure}[b]{0.705\textwidth}
    \centering
         \includegraphics[width=\textwidth]{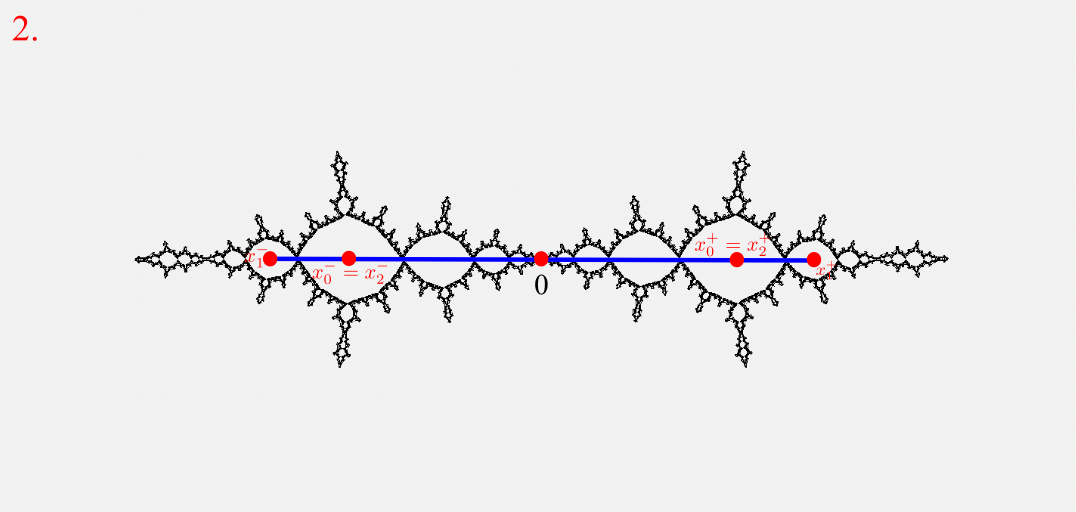}
         \caption{The $(+,-)$ type basilica}
         \label{fig:basilica}
     \end{subfigure}\hfill
    \centering
    \caption{Postcritically finite polynomials along with their Hubbard trees in the family $p_a(z)=a{\displaystyle \int_0^z}(1-w^2)dw$. $x_0^- = -1$, $x_0^+=1$ are the two critical points, with $x_i^{\pm} = p_a^{\circ i}(x_0^\pm)$. Terminology:`$(+,-)$ type polynomial $p$' refers to the polynomial in $\mathcal{CBO}_1$ that looks like a pair of copies of the polynomial $p = f_c$, where $c \in \mathcal{M}_2$}
    \label{fig:trees}
\end{figure}
\begin{figure} 
\centering
\begin{subfigure}[b]{0.9\textwidth}
\centering
         \includegraphics[width=\textwidth]{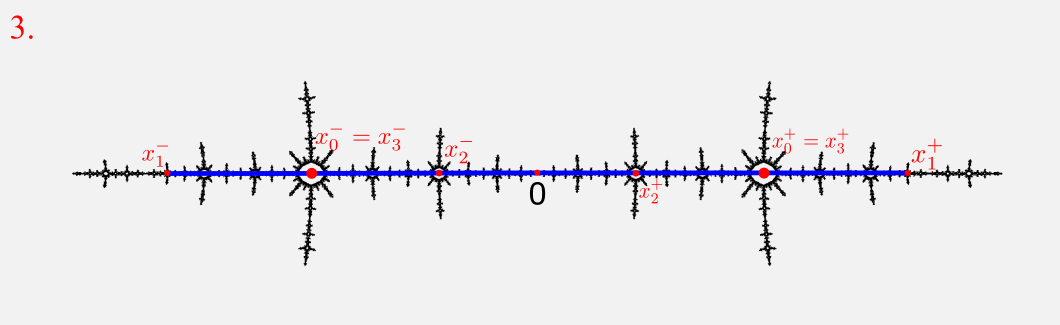}
         \caption{The $(+,-)$ type airplane}
         \label{fig:airplane}
     \end{subfigure}\\
     \begin{subfigure}[b]{0.9\textwidth}
     \centering
         \includegraphics[width=\textwidth]{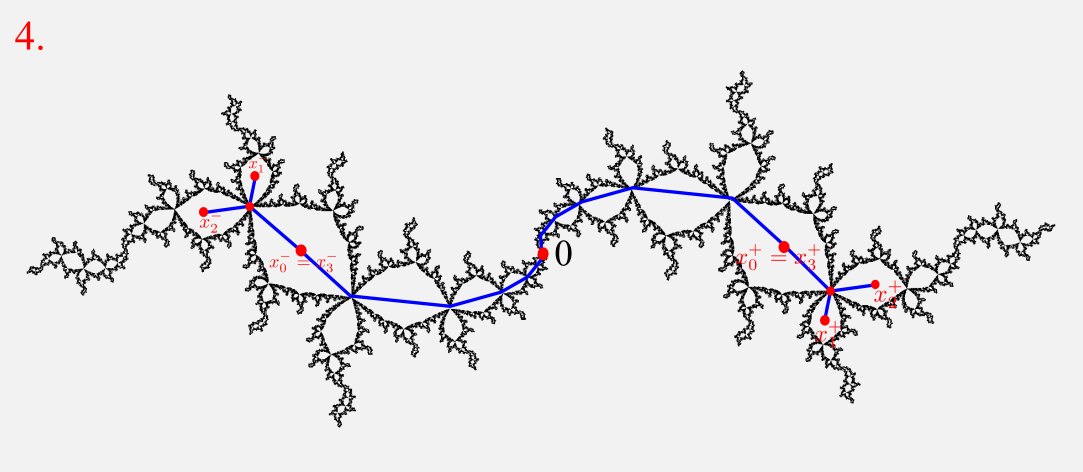}
         \caption{The $(+,-)$ type rabbit}
         \label{fig:rabbit}
     \end{subfigure}\\
     \begin{subfigure}[b]{0.9\textwidth}
     \centering
         \includegraphics[width=\textwidth]{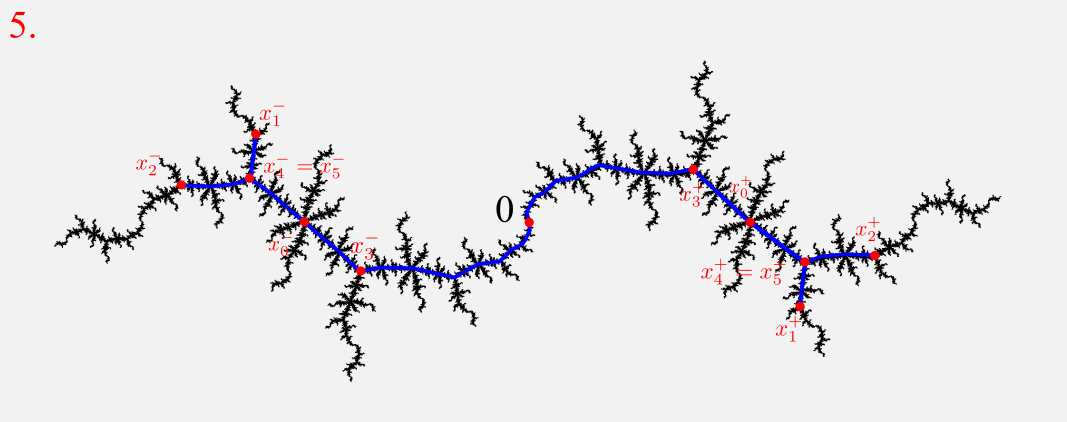}
         \caption{The $(+,-)$ type $z\mapsto z^2 -0.10003 + 0.95227i $}
         \label{fig:trip_point}
     \end{subfigure}
     \centering
    \caption{More examples of Hubbard trees in $\mathcal{CBO}_1$}
    \label{fig:trees2}
\end{figure}
\subsection{Monic representatives of polynomials in $\mathcal{BO}_d$}\label{section:imgdefn}
Any polynomial $p_a$ for $a \in \mathcal{BO}_d$ has leading coefficient $T(a):=\frac{(-1)^da}{d^d(2d+1)}$ attached to $z^{2d+1}$. Let $w(z) = \frac{z}{s}$. We note that $P_s(z)=w^{-1} \circ p_a \circ w$ is monic if and only if $s^{2d} = T(a)$.

The polynomial $P_s$ admits a unique B\"{o}ttcher chart $\varphi_s$ in a neighborhood of $\infty$ that satisfies $\lim_{z \rightarrow \infty}\frac{\varphi_s(z)}{z} = 1$, and if $s_1^{2d} = s_2^{2d}$, then $\varphi_{s_1}(z) = \omega \varphi_{s_2}(z)$ where $\omega = \frac{s_2}{s_1}$ is a $2d-$th root of unity. Let $\mathcal{R}_\theta(s)$ denote the ray at angle $\theta$ in the dynamical plane of $P_s$. Then it is easy to see that if $\frac{s_2}{s_1} = e^{\frac{2\pi ij}{2d}}$ for some integer $j$, then for all $\theta \in \R/\Z$, 
\begin{align*}
    \mathcal{R}_\theta(s_2) & =  \mathcal{R}_{\theta+\frac{j}{2d}}(s_1)
\end{align*}
Additionally, since $P_s$ is odd, $\varphi_s(z) = \lim _{n \rightarrow \infty} P_s^{\circ n}(z) ^{\frac{1}{(2d+1)^n}}$ satisfies 
\begin{align*}
    \varphi_s(-z) & = -\varphi_s(z)\\
    \implies \mathcal{R}_{\theta+\frac{1}{2}}(s) & = -\mathcal{R}_\theta(s)
\end{align*}

For any $s$ such that $0 \in J_{P_s}$, there exists a subset $\Theta$ of $\big\{0,1,...,\frac{2d-1}{2d}\big\}$ satisfying $\Theta+\frac{1}{2}=\Theta$ such that the dynamical rays landing at $0$ are exactly those with angles in $\Theta$. Moreover, if $s'=e^{\frac{2 \pi i j}{2d}} s$, then the set of angles that land at $0$ in the dynamical plane of $P_{s'}$ is $\frac{j}{2d}+\Theta$.

This shows that for any $a \in \mathcal{CBO}_d$ such that $0 \in J_{p_a}$, there exists a monic representative  $P_s$ of $p_a$ so that $0$ is the landing point of $\mathcal{R}_0(s)$ and $\mathcal{R}_\frac{1}{2}(s)$. The union of the rays  $\mathcal{R}_0(s)$ and $\mathcal{R}_\frac{1}{2}(s)$  separates the plane into two connected components $F^L_s$ and $F^R_{s}$, named so that $F^L_{s}$ is the component that contains the critical point $-s\sqrt{d}$, and $F^R_s$ contains $s\sqrt{d}$ ($L$ and $R$ stand for left and right).
\subsubsection{The Image under $\Phi_d$}
The set of $a \in \mathcal{CBO}_d$ such that $0 \in J_{p_a}$ is exactly $\mathcal{CBO}_d \setminus \D$. Let $H$ be the component of $\mathcal{CBO}_d \setminus \D$ that intersects the right half plane. By the previous paragraph, on $H$, there exists a branch of $a \mapsto (T(a))^\frac{1}{2d}$, which we shall denote $s(a)$, so that $0$ is the landing point of $\mathcal{R}_0(s(a))$ and $\mathcal{R}_\frac{1}{2}(s(a))$. 

We note that the critical points of $P_{s}$ are $\pm s \sqrt{d}$. Let $\mathscr{O}^R_s$ and $\mathscr{O}^L_s$ be the orbits under $P_s$ of $s\sqrt{d}$ and $-s\sqrt{d}$ respectively.
\begin{align*}
    \Phi_d(\mathcal{M}_{d+1})& = \big\{a\big| \mathscr{O}^L_{s(a)} \subset F^L_{s(a)} \cup \{0\} \text{ and }\mathscr{O}_{s(a)}^R \subset F^R_{s(a)} \cup \{0\}\big \}
\end{align*}
That is, the image is the set of polynomials where the dynamical rays at angles $0,\frac{1}{2}$ separate the orbits of the two distinct critical points. It is easy to see that the latter is a closed set in $\mathcal{CBO}_d$. We shall henceforth denote this image as $\mathcal{CBO}^{(+,-)}_d$. This is also a proper subset of $\mathcal{CBO}_d$: for example, $a=-1$ is not in this set. We have described in detail the dynamics of polynomials in $\mathcal{CBO}_d^{(+,-)}$ in  Section~\ref{section:imgdynamics}.

Figure~\ref{fig:odd1_right} illustrates the way $\Phi_d$ maps $\mathcal{M}_{2}$ by pointing out the position of the images of well-known polynomials like the rabbit, co-rabbit, airplane, etc.  
\subsection{Quotienting by $z^2$}
Given $p_a \in \mathcal{BO}_d$, there exists a unique polynomial $\mathcal{P}_a:\hat{\C} \longrightarrow \hat{\C}$ so that the following diagram commutes.\\
\[
\begin{tikzcd}
\hat{\C} \arrow[d,"z\mapsto z^2"] \arrow[r,"p_a"] & \hat{\C} \arrow[d,"z\mapsto z^2"]\\
\hat{\C} \arrow[r,"\mathcal{P}_a"] & \hat{\C} 
\end{tikzcd}
\]
The critical points of $\mathcal{P}_a$ are $d$ and $\big\{x_\ell^2\big\}_{\ell=1}^d$, where $\pm x_\ell$, $\ell = 1,2,...,d$, are the pre-images of $0$ that are not equal to $0$. 
When $d=1$, the family $\mathcal{P}_a$ corresponds to the collection of cubic polynomials where one critical point is a pre-image of a $\beta-$fixed point (that is, the landing point of a dynamical ray at angle $0$ or $\frac{1}{2}$) and the other is free. This is isomorphic to the collection $\mathcal{F} = \big\{(a,b) | Q_{a,b}(a) = -2a\big\}$, where $Q_{a,b}(z) = z^3 - 3a^2z+b$ discussed in \cite[Chapters~I,II]{10.1007/BFb0081395} in the following way: letting $\widetilde{a} = 9a^2$, we have
\begin{align*}
    \mathcal{P}_{-a} &=\mathcal{P}_a \simeq Q_{\widetilde{a},2\widetilde{a}^3-2\widetilde{a}}
\end{align*}
where $\simeq$ refers to affine conjugacy.

Let $F_+ \subset \mathcal{F}$ be the collection of polynomials $Q_{a,b}$ for which the critical point $-a$ maps to the landing point of the dynamical ray at angle $0$, and the other critical point $a$ is in the filled Julia set. Douady and Branner  show that there exists a homeomorphism $\Phi_B$ from the basilica limb of the Mandelbrot set to $F_+$. The relationship between  $\mathcal{CBO}_1^{(+,-)}$ and $F_+$ is as follows:
\begin{align*}
     \big\{\big(\widetilde{a},2{\widetilde{a}}^3 - 2\widetilde{a}\big) \big|a \in \mathcal{CBO}_1^{(+,-)}, \widetilde{a} = 9a^2\big\} \subsetneq  F_+
\end{align*}
For $d=1$, the map $\Phi_d$ we construct in this paper exhibits different behaviour from the $\Phi_B$ that the authors construct in \cite[Chapter~II]{10.1007/BFb0081395}. Firstly, it is defined on the whole of the Mandelbrot set, and not just the basilica limb. Secondly, generally, given $c$ in the basilica limb, if $Q_{\widetilde{a},2\widetilde{a}^3 - 2\widetilde{a}}$ is the polynomial corresponding  to $\Phi_B(c)$, $\widetilde{a}$ does not equal $9\Phi_d(c)^2$. Thirdly, it is evident that our map does not change the combinatorics of critical portraits, whereas $\Phi_B$ does.
\subsection{Properties of $\mathcal{CBO}_d$}Let $\mathcal{MBO}_d$ denote the set of $s$ such that $P_s$ has connected filled Julia set. Using the methods used by Douady and Hubbard in their proof that the Mandelbrot set is connected (see, for example, \cite[Chapter~10]{MR3675959}), we find that the map
\begin{align*}
    \hat{\C} \setminus \mathcal{MBO}_d & \longrightarrow \hat{\C} \setminus \overline{\D}\\
    c &\mapsto \varphi_c(P_s(-s\sqrt{d}))\\
    \infty &\mapsto \infty
\end{align*}
is a proper map of degree $2d$ ramified at $\infty$. This implies $\mathcal{MBO}_d$, and consequently $\mathcal{CBO}_d$, is connected and compact. 
\subsection{Proof Strategy and Tools}
We will use all the theorems listed in this section. Their statements are borrowed from \cite{Douady1985}.
\begin{thm}(The Measurable Riemann Mapping Theorem)
Let $\mu_0$ be the standard complex structure on $\C$. If $\mu$ is a complex structure on a simply connected domain $U \subset \C$ that has bounded dilitation with respect to $\mu_0$, then there exists a quasiconformal map $f:U \longrightarrow V\subset \hat{\C}$ satisfying
\begin{align*}
f^*\mu_0 = \mu
\end{align*}
unique up to post composition by a M\"{o}bius transformation.
\begin{enumerate}
    \item Let $\mu_n $ be a sequence of Beltrami forms on a bounded domain $U \subset \C$ such that $||\mu_n||_\infty \leq m<1$ and $\mu_n \longrightarrow \mu$ in the $L^1$ norm, where $\mu$ is a Beltrami form on $U$ with $||\mu||_\infty \leq m$. There exists a sequence of integrating maps $\phi_n$ for $\mu_n$ and an integrating map $\phi$ for $\mu$ such that $\phi_n\longrightarrow \phi$ uniformly on $U$.
    \item 
Let $\Lambda$ be an open set in $\C^{n}$ and $(\mu_\lambda)_{\lambda \in \Lambda}$ be a family of Beltrami forms on $U$. Suppose $\lambda \mapsto \mu_\lambda(z)$ is holomorphic for almost every $z \in U$ , and that there exists a constant $m<1$ such that $||\mu_\lambda||_\infty<m$ for each $\lambda$. For each $\lambda$, extend $\mu_\lambda$ to $ \C$ by $\mu_\lambda=0$ on $\C\setminus U$, and let $f_\lambda: \C\ \longrightarrow \C$ be the unique quasi-conformal homeomorphism such that $f_\lambda^*\mu_0 = \mu_\lambda$, and $\frac{f_\lambda(z)}{z}\longrightarrow 1$ when $|z| \longrightarrow \infty$. Then $(\lambda,z) \mapsto (\lambda,f_\lambda(z))$ is a homeomorphism of $\Lambda  \times \C$ onto itself, and for each $z \in \C$ the map $\lambda \mapsto f_\lambda(z)$ is holomorphic.
\end{enumerate}
\end{thm}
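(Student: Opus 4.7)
The plan is to prove the three clauses via the classical Ahlfors--Bers approach using the Beurling transform, $T h (z) = -\frac{1}{\pi} \,\mathrm{p.v.}\! \int_\C \frac{h(w)}{(w-z)^2}\, dA(w)$, which formally satisfies $T(\partial g) = \overline{\partial} g$ for compactly supported Sobolev functions. First, I would reduce to the case where $U = \C$ and $\mu$ has compact support, by extending $\mu$ by $0$ outside $U$ and normalizing the unknown homeomorphism so that $f(z) = z + g(z)$ with $g(z)/z \to 0$ at $\infty$. The Beltrami equation $\overline{\partial} f = \mu\, \partial f$ then becomes $(I - \mu T)(\partial g) = \mu$.

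Here the key analytic input is the Calder\'on--Zygmund estimate: $T$ extends to a bounded operator on $L^p(\C)$ for every $p \in (1,\infty)$, with $\|T\|_{L^2 \to L^2} = 1$ and the operator norm continuous in $p$. Therefore, given $m < 1$, one can pick $p > 2$ with $m \,\|T\|_{L^p \to L^p} < 1$, invert $I - \mu T$ as a Neumann series on $L^p(\C)$, recover $\partial g$ there, and solve for $g$ via the Cauchy transform; Sobolev embedding ($p > 2$) produces a H\"older continuous quasiconformal homeomorphism. Uniqueness up to a M\"obius post-composition follows from Weyl's lemma applied to $f_1 \circ f_2^{-1}$, which is then $1$-quasiconformal hence conformal.

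For clause (1), since $\|\mu_n\|_\infty \le m$ and $\mu_n \to \mu$ in $L^1$, interpolation yields $\mu_n \to \mu$ in $L^q$ for every $q < \infty$. The Neumann series representation
\begin{equation*}
\partial g_n = \sum_{k \ge 0} (\mu_n T)^k \mu_n
\end{equation*}
converges in $L^p$ uniformly in $n$, and term-by-term convergence gives $\partial g_n \to \partial g$ in $L^p$. Applying the Cauchy transform converts this into uniform convergence of $g_n$ to $g$ on compact sets, and renormalizing the integrating maps (using the freedom of M\"obius post-composition on $U$) yields the claim. For clause (2), I would use the same series expansion: for each fixed $z$ and each $k$, the term $\big((\mu_\lambda T)^k \mu_\lambda\big)(z)$ is a finite iterated singular integral of a product of functions of the form $\mu_\lambda(w_j)$, each holomorphic in $\lambda$ for a.e.\ $w_j$. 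Morera's theorem applied under the integral sign, together with the uniform $L^p$ bound on the tail of the series, shows $\lambda \mapsto f_\lambda(z)$ is holomorphic; joint continuity and the homeomorphism property of $(\lambda, z) \mapsto (\lambda, f_\lambda(z))$ then follow from clause (1) applied locally in $\lambda$.

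The main obstacle is the sharp control of $\|T\|_{L^p \to L^p}$ near $p = 2$, which underlies both the initial solvability and the quantitative convergence in clause~(1); this is a nontrivial input from harmonic analysis (ultimately the Calder\'on--Zygmund theory for homogeneous kernels of Ahlfors--Beurling type). Once this bound is granted, the remaining work is the routine verification that the Neumann series, the Cauchy transform, and the normalizations at $\infty$ combine to give the required uniformity and holomorphic parameter dependence.
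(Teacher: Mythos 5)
The paper does not prove this statement at all: it is a background theorem quoted verbatim from Douady's survey \cite{Douady1985} (i.e.\ the Ahlfors--Bers theorem) and used as a black box. Your sketch is precisely the standard Ahlfors--Bers argument (Neumann series for $(I-\mu T)^{-1}$ on $L^p$ with $p>2$ via the Calder\'on--Zygmund bound on the Beurling transform, Weyl's lemma for uniqueness, and term-by-term estimates for the two parameter-dependence clauses) and is correct in outline; the only slip is a convention mismatch --- with the usual normalization $T(\overline{\partial} g)=\partial g$ the fixed-point equation reads $(I-\mu T)\,\overline{\partial} g=\mu$, solved for $\overline{\partial} g$ rather than $\partial g$.
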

\begin{defn}[Polynomial-like maps]
Given Jordan domains $U,V \subset \C$ with $\overline{U} \subset V$, a polynomial-like map $f: U\longrightarrow V$ is an analytic proper map of finite degree $d$.

The filled Julia set of $f$ is the set 
\begin{align*}
    K_f & = \bigcap _{n\geq 0}f^{\circ n}(U)
\end{align*}

\end{defn}
Given a polynomial $p:\hat{\C} \longrightarrow \hat{\C}$ of degree $d$, we can always find suitable domains $U,V$ such that $\overline{U} \subset V$ and $p\big |_U: U\longrightarrow V$ is polynomial-like of degree $d$.
\begin{defn}[Hybrid Equivalence]
Given two polynomial like maps $f: U \longrightarrow V$ and $g: U' \longrightarrow V'$, we say that $f,g$ are hybrid equivalent if there exists a quasiconformal homeomorphism $\psi: (V,U) \longrightarrow (V',U')$ satisfying $g\circ \psi = \psi \circ f $, with zero dilitation on $K_f$.
\end{defn}
The following theorem is due to Douady and Hubbard, and we shall be using it several times.
\begin{thm}[The Straightening Theorem for polynomial-like maps]
Every polynomial-like map of degree $d$ is hybrid equivalent to a polynomial of degree $d$.
\end{thm}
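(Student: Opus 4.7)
The plan is to carry out the classical quasiconformal surgery of Douady and Hubbard: extend $f: U \to V$ to a quasiregular branched cover $F: \hat{\C} \to \hat{\C}$ that agrees with $z \mapsto z^d$ near infinity, construct an $F$-invariant Beltrami form $\mu$ of bounded dilatation, and then integrate $\mu$ using the Measurable Riemann Mapping Theorem to conjugate $F$ to an honest polynomial.

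First I would build the extension. Since $\overline{U} \subset V$ and both are Jordan domains, the complements $\hat{\C} \setminus \overline{U}$ and $\hat{\C} \setminus \overline{V}$ are topological disks containing $\infty$. Using Riemann maps that fix $\infty$, I would identify these complements with the exteriors of round disks; the boundary map $f|_{\partial U}: \partial U \to \partial V$ then becomes an orientation-preserving degree-$d$ covering between circles, isotopic to $z \mapsto z^d$. A standard quasisymmetric interpolation yields a quasiconformal extension $\hat{\C} \setminus U \to \hat{\C} \setminus V$ that coincides with $z \mapsto z^d$ outside a sufficiently large disk. Piecing this together with $f$ on $U$ gives a quasiregular branched covering $F: \hat{\C} \to \hat{\C}$ of degree $d$ with $\infty$ as a totally ramified fixed point.

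Next I would construct the invariant almost complex structure. Let $A = V \setminus \overline{U}$ denote the fundamental annulus. Define $\mu = \mu_0$ on $K_f$ and on $\hat{\C} \setminus V$, set $\mu = F^*\mu_0$ on $A$, and on each subsequent preimage $F^{-n}(A) \cap U$ for $n \geq 1$ set $\mu = (F^n)^*\mu_0$. The key dynamical observation is that $F(A) \subset \hat{\C} \setminus V$ and $F$ is holomorphic on $\hat{\C} \setminus (A \cup U)$, so each escaping orbit crosses $A$ exactly once. Therefore $\|\mu\|_\infty$ equals the Beltrami coefficient of $F|_A$, which is bounded strictly below $1$, and by construction $F^*\mu = \mu$ wherever $F$ is defined.

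Finally, the Measurable Riemann Mapping Theorem produces a quasiconformal homeomorphism $\Phi: \hat{\C} \to \hat{\C}$ with $\Phi^*\mu_0 = \mu$, normalized to fix $\infty$. Then $P := \Phi \circ F \circ \Phi^{-1}$ satisfies $P^*\mu_0 = \mu_0$, is therefore holomorphic, is a degree-$d$ branched self-cover of $\hat{\C}$ with $\infty$ as a totally ramified fixed point, and hence is a polynomial of degree $d$. The restriction $\Phi|_U$ is the desired hybrid equivalence: it conjugates $f$ to $P$ and has zero dilatation on $K_f$ because $\mu = \mu_0$ there. I expect the principal obstacle to be the first step, namely producing a quasiregular global extension whose dilatation is controlled and which matches $f$ smoothly across $\partial U$ while being $z \mapsto z^d$ near $\infty$; all of the genuine technical work, and in particular the quasisymmetric interpolation across the fundamental annulus, is concentrated there.
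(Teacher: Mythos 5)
The paper does not prove this statement; it is quoted as a known result of Douady and Hubbard, with the proof living in the cited reference \cite{Douady1985}. Your argument is precisely the classical surgery proof from that source, and it is correct in outline: extend across the fundamental annulus to a quasiregular degree-$d$ branched cover of $\hat{\C}$ agreeing with $z\mapsto z^d$ near $\infty$, spread $\mu_0$ by pullback along the escaping orbits, and integrate.

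One point in your write-up does not quite deliver what you claim. You build the extension as a quasiconformal map $\hat{\C}\setminus U\to\hat{\C}\setminus V$ that coincides with $z\mapsto z^d$ only outside a sufficiently large disk, but then you assert that $F$ is holomorphic on all of $\hat{\C}\setminus(A\cup U)$ and conclude that every escaping orbit picks up dilatation exactly once, so that $\|\mu\|_\infty$ equals the dilatation of $F|_A$. With the extension as you describe it, orbits in the region between $\partial V$ and the large disk may pass through the non-holomorphic locus several times, and each passage compounds the dilatation. This is repairable in two standard ways: either note that the successive images $F^{n}(V\setminus\overline{U})$ tile $\hat{\C}\setminus U$ and exhaust any compact set in finitely many steps, so the number of non-holomorphic passages is uniformly bounded and $\|\mu\|_\infty<1$ still holds; or, better, perform the gluing the way Douady and Hubbard do, transporting the exterior of a round disk onto $\hat{\C}\setminus U$ via the interpolating map so that $F$ is \emph{literally} $z\mapsto z^d$ everywhere off the fundamental annulus, after which your one-crossing argument is exactly right. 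With that adjustment the proof is complete.
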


Our strategy for constructing $\Phi_d$  follows the general layout in \cite{10.1007/BFb0081395}. Given $c \in \mathcal{M}_d$, we will perform a topological surgery in the dynamical plane using the dynamics around one of the $\beta-$ fixed points. At the end of this surgery, we will construct a quasiregular map $g_c$ from a simply-connected Riemann surface $X_1$  to a simply connected Riemann surface $X$ with $\overline{X}_1 \subset X$.

Next, we will show that $g_c$ has an invariant complex structure, and is therefore equivalent to a polynomial-like map of degree $2d+1$. We will finally show that this map is hybrid equivalent to an odd polynomial $p_a$ with $a \in \mathcal{CBO}_d$. The strategy for constructing an inverse for $\Phi_d$ is similar.

All the figures in this paper are illustrations of the case $d=2$. 
\begin{figure}
    \centering
    \includegraphics[scale=0.4]{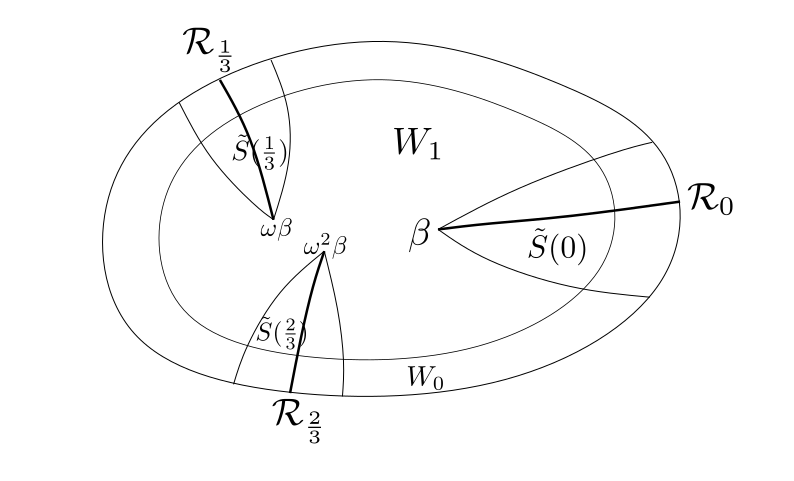}
    \caption{The dynamical plane of $z^3+c$}
    \label{fig:dynplaneinit}
\end{figure}

\begin{figure}
    \centering
    \includegraphics[scale=0.4]{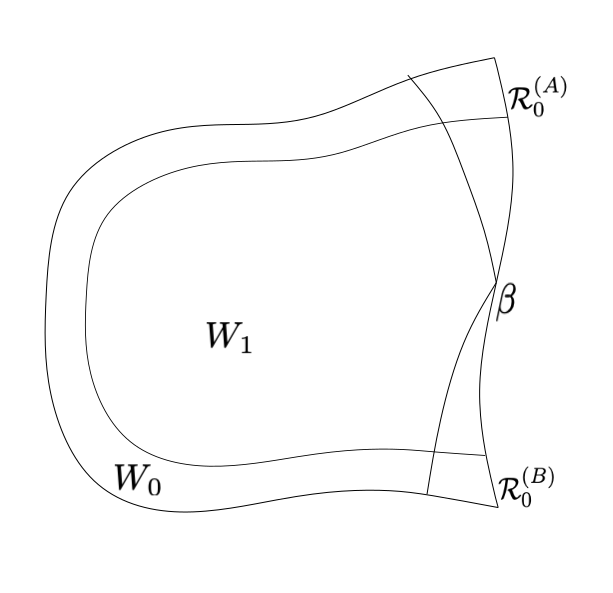}
    \caption{Cutting the dynamical plane of $z^3+c$.}
    \label{fig:my_label}
\end{figure}
\section{Construction of $\Phi_d$}\label{section:defn}
As mentioned before, we proceed along the lines of holomorphic surgery as outlined in \cite{10.1007/BFb0081395}. Let $f_c(z) = z^{d+1}+c$, and $\varphi_c$ be a  B\"{o}ttcher chart at $\infty$ that satisfies $\lim_{z \rightarrow \infty} \frac{\varphi_c(z)}{z} = 1$. In the absence of the latter condition, $\varphi_c$ is unique only  upto multiplication by a $d$th root of unity. By including the condition, we fix a choice of $\varphi_c$ for every $c$ that makes it continuous in the following sense: given $c \in \C$ and $z \in \C$ such that $\varphi_{\tilde{c}}(z)$ is well-defined for $\tilde{c}$ in a neighborhood of $c$, $\tilde{c} \mapsto \varphi_c(z)$ is continuous in $\tilde{c}$.

Now fix $c \in \mathcal{M}_{d+1}$. The dynamical ray $\mathcal{R}_0$ at angle $0$ lands at a fixed point $\beta$ on the dynamical plane of $c$. For a fixed $r>0$, choose  $q,\eta$ such that $q\eta < r$. We will explain how to choose $r$ in the following passages. Let $G_c$ denote the Green's escape rate function.\\
Let \begin{align*}
    W_0 & = \{z: G_c(z) < \eta\}
\end{align*}
and $W_i = f_c^{-1}(W_0)$. We note that 
\begin{align*}
    W_i & = \Biggl\{z: G_c(z) < \frac{\eta}{(d+1)^i}\Biggl\}
\end{align*}
Also define 
\begin{align*}
    \widetilde{S}(0) & = \{\varphi_c^{-1}(e^{s+2\pi i t}): s \in (0,\eta), |t| < qs\}
\end{align*}
We call $\widetilde{S}(0)$ a ``sector'' based at $\beta$. It is invariant under $f_c$, and its inverse image under $f_c$ is a union of similar sectors, each based at a pre-image of $\beta$. More precisely, for $\ell \in \{1,2,...,d\}$, let
\begin{align*}
    \widetilde{S}\Big(\frac{\ell}{d+1}\Big) & = \Big\{\varphi_c^{-1}(e^{s+2\pi i t}): s \in (0,\eta), \Big|t-\frac{\ell}{d+1}\Big| <   qs\Big\} \subset W_0\\
\end{align*}
Then 
\begin{align*}
    f_c^{-1}(\widetilde{S}(0)) & =  \bigcup_{\ell=0}^d \Big(W_1 \cap \widetilde{S}\Big(\frac{\ell}{d+1}\Big) \Big)
\end{align*}
\begin{figure}
    \centering
    \includegraphics[scale=0.4]{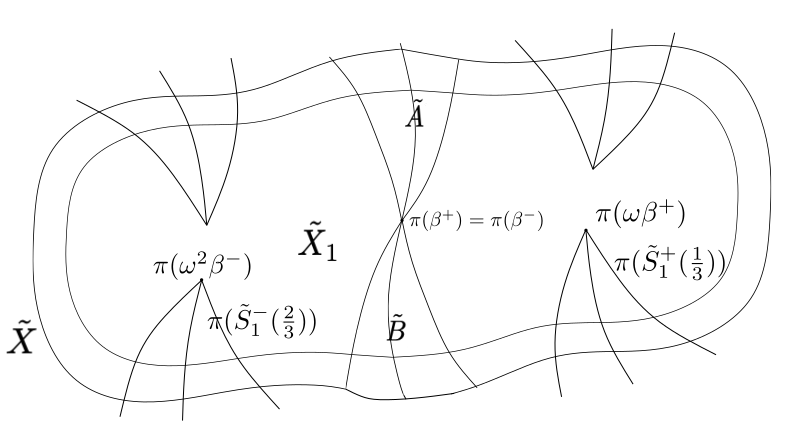}
    \caption{The Riemann Surface $\widetilde{X}$ }
    \label{fig:dynplanefinal} 
\end{figure}
When imposing the condition $q\eta < r$, we choose $r$ small enough so that the sectors  $\widetilde{S}(\frac{\ell}{d+1})$, $\ell=0,1,...,d$, are pairwise disjoint (see Figure~\ref{fig:dynplaneinit}). 

Additonally, form open subsets  $\widetilde{S_i}(0) = W_i \cap \widetilde{S}(0)$ of $\widetilde{S}(0)$. All points in $\widetilde{S}_i(0)$ have escape rates in the interval $\Big(0,\frac{\eta}{(d+1)^{i}}\Big)$, and $f_c$ maps $\widetilde{S}_i(0)$ conformally onto $\widetilde{S}_{i-1}(0)$. By definition, there is a branch of $\log$ that satisfies $$\log \circ \varphi_c(\widetilde{S}(0)) = \{z: Re(z) >0 \text{ and } |Im(z)| < 2\pi q Re(z) \}$$
\subsection{Steps in the definition of $\Phi_d(c)$}\label{sec:defnsteps}
As in \cite[Chapter~II]{10.1007/BFb0081395}, we shall follow this sequence of steps:
\begin{enumerate}
    \item First, we cut along $\mathcal{R}_0$ and glue together two copies of $W_0$, one rotated by $180^\circ$, to get a quotient Riemann surface $\widetilde{X}$
    \item We then construct $f$ on an open subset of $\widetilde{X}$ that is 
    \begin{itemize}
        \item analytic and acts like $f_c$ away from the sectors $\widetilde{S}\big(\frac{\ell}{d+1}\big)$, $\ell \in \{0,1,2,...,d\}$ on both copies of $W_0$
        \item has lines of discontinuities at the two copies of $\mathcal{R}_{\frac{\ell}{d+1}}$ for $\ell \in \{1,2,...,d\}$
    \end{itemize}
    \item We show that by changing $f$ in sectors around these rays, and by modifying the boundary of these sectors, we may construct a quasiregular map $g :X_1 \longrightarrow X$ between simply connected Riemann surfaces with $\overline{X_1} \subset X$, and an almost complex structure $\sigma$ on $X$ that is $g$ invariant. Under the measurable Riemann mapping theorem, there exists a quasi-conformal map $\psi$ such that $\psi \circ g \circ \psi^{-1}$ is analytic. 
    \item Finally, we will apply the straightening theorem to obtain a unique polynomial $p_{a}$ hybrid equivalent to $\psi \circ g \circ \psi^{-1}$.\\
\end{enumerate}

We will now implement these steps one by one.\\
\subsubsection{Cutting along $\mathcal{R}_0$}
Let us cut along $\mathcal{R}_0$. 
In this slit disk, $\widetilde{S}(0)$ is now split into two components $\widetilde{S}_1$ and $\widetilde{S}_2$; we will call the copy of $\mathcal{R}_0$ bounding $\widetilde{S}_1$  $\mathcal{R}_0^{(A)}$ and the one bounding $\widetilde{S}_2$, $\mathcal{R}_0^{(B)}$. Every $x \in \mathcal{R}_0$ now has two copies $x^{(A)}$ and $x^{(B)}$. 

Consider a second copy of this slit $W_0$, and rotate it by $\pi$. We will accent all objects in this (slit) second copy with a $^-$ superscript), and all objects in the original copy with a $^+$ superscript. 
Glue the slit copies  $W_0^+, W_0^-$ together using the following rule:
\begin{align*}
    \forall x \in \mathcal{R}_0, \hspace{10pt}
    x^{(A^+)} &\sim x^{(B^-)}\\
    x^{(B^+)} &\sim x^{(A^-)}
\end{align*}
This gives a quotient map 
\begin{align*}
    \pi : W^+_0 \sqcup W_0^-  \longrightarrow W^+_0 \sqcup W_0^- / \sim
\end{align*}
\noindent This quotient surface can be endowed with a Riemann surface structure that makes $\pi$ analytic away from $\beta^\pm$. We can think of  $\widetilde{X}$ as an open subset of the branched cover over $\C$ corresponding to $w \mapsto w^2+\beta$, and $\pi$ as a branch of $\sqrt{z-\beta}$ on each of the slit copies $W_0^+$ and $W_0^-$.
\begin{figure}
    \centering
    \includegraphics[scale=0.24]{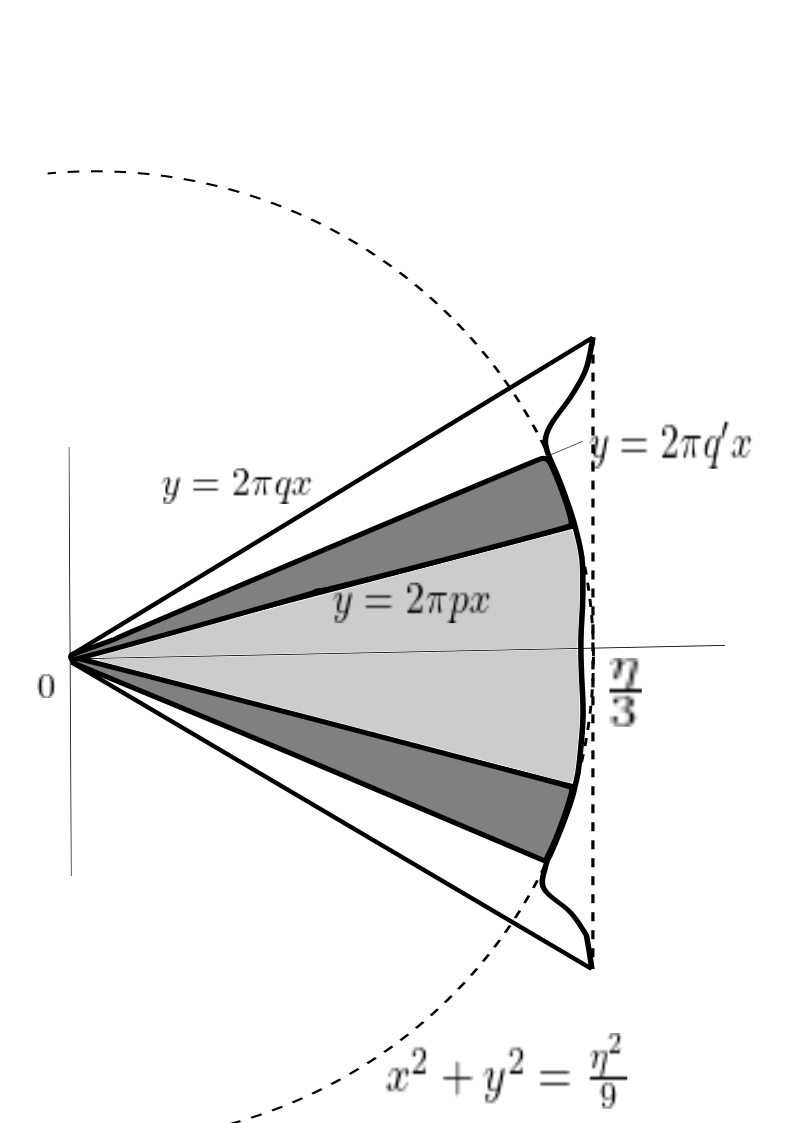}
    \caption{The set $\Delta_q \subset \log \circ \varphi_c(\widetilde{S}_1(0))$. We will eventually define a map that is conformal on the white and lightly shaded regions, and quasiconformal on the darkly shaded region}
    \label{fig:triangle}
\end{figure}
We name this Riemann surface $\widetilde{X}$, and note that $\widetilde{X}$ has smooth boundary.
Let $\widetilde{X_1} = \pi(W^+_1 \sqcup W_1^-)$. Then  $\widetilde{X_1}$ is an open subset of $X$. We also define the ``sectors'' $\widetilde{A}$ and $\widetilde{B}$ as follows: 
\begin{align*}
    \widetilde{A} &= \pi(\widetilde{S}_1^- \cup \widetilde{S}_2^+)\\
    \widetilde{B} &= \pi(\widetilde{S}_2^- \cup \widetilde{S}_1^+)
\end{align*}
See Figure~\ref{fig:dynplanefinal} for an illustration.
\begin{rem}\label{rem:whichbeta}
We could have performed our cut and paste surgery by cutting along $\mathcal{R}_{\frac{j}{d}}$ for any $i \in \{0,1,...,d-1\}$ (the landing points of these rays are precisely the $\beta$ fixed points of $f_c$). To get a continuous embedding $\Phi_d$ of $\mathcal{M}_{d+1}$, however, we will use the same $j$ for all $c \in \mathcal{M}_{d+1}$.
\end{rem}
\subsubsection{Constructing a map $f$ on a subset of $\widetilde{X}_1$}\label{sec:fdefn}
For $z \in \pi(W_1^{\pm})$, define
\begin{align*}
    f(\pi(z)) & = 
    \begin{cases}
    \pi(f_c(z)) & z \not \in \mathcal{R}_0^{A^\pm} \cup \mathcal{R}_0^{B^\pm}\\
    \pi(f_c(z^{(A^+)})) = \pi(f_c(z^{(B^-)}))& z \in (\mathcal{R}_0^{(A^+)} \cap W_1^+) \cup (\mathcal{R}_0^{(B^-)} \cap W_1^-)\\
    \pi(f_c(z^{(B^+)})) = \pi(f_c(z^{(A^-)}))& z \in (\mathcal{R}_0^{(B^+)} \cap W_1) \cup (\mathcal{R}_0^{(A^-)} \cap W_1^-)\\
    \pi(\beta^+) = \pi(\beta^-) & z \in \{\beta,\beta^-\}
    \end{cases}
\end{align*}

For $\ell \in \{1,2,...,d\}$, $f$ is not well defined on $\pi(\mathcal{R}^\pm(\frac{\ell}{d+1}))$ and we cannot extend it over any of these rays continuously since one component of the complement of such a ray in $\pi(\widetilde{S}^\pm_{i-1}(\frac{\ell}{d+1}))$ is mapped to $\widetilde{A}$, and the other is mapped to $\widetilde{B}$. 
\begin{figure}
    \centering
    \includegraphics[scale=0.4]{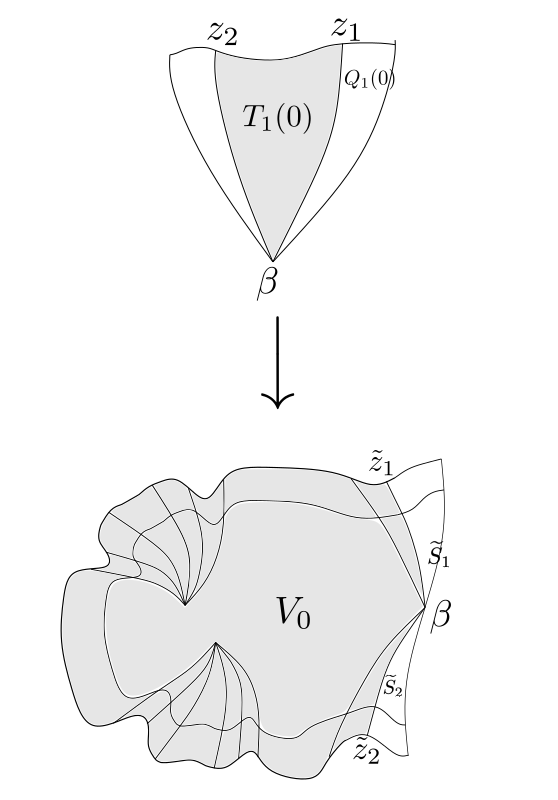}
    \caption{The Riemann map $\widetilde{h}$ maps the lightly shaded region $T_1(0)$ to the region  $V_0$ (the non-white region at the bottom). The darkly shaded region in the bottom figure is $f_c(Y_1(0))$}
    \label{fig:riemannmap}
\end{figure}
However, on the complement in $\widetilde{X}_1$ of the rays above , $f$ is analytic.
\subsubsection{A new map on some sectors} 
By our definition of sectors, note that  $\widetilde{S}\Big(\frac{\ell}{d+1}\Big) = \omega^\ell \tilde{S}(0)$. 

Our strategy will be to produce a quasiregular map $g$ that agrees with $f$ on the complement of the  sets $\pi\Big(\widetilde{S}_1^{\pm}\big(\frac{\ell}{d+1}\big)\Big)$ for $\ell=1,2,...,d$. We let
\begin{align*}
    \widetilde{S}_i\Big(\frac{\ell}{d+1}\Big) & = \widetilde{S}\Big(\frac{\ell}{d+1}\Big) \cap W_i
\end{align*}
$f_c$ maps  $\widetilde{S}_i(\frac{\ell}{d+1})$ conformally to $\widetilde{S}_{i-1}(0)$.

Choose $p,q'$ such that $0<p<q'<q$, and consider the set $\Delta_q$ in $\log$ B\"{o}ttcher coordinates as illustrated in Figure~\ref{fig:triangle}. 
Its boundary is defined so that it is smooth away from the points $0, \log {(\frac{\eta}{d+1})}(1 \pm  2 \pi q i)$, and such that it coincides with an arc of the circle $x^2+y^2=\frac{\eta^2}{(d+1)^2}$ on the two connected regions region bounded by $y=\pm 2 \pi px$ and $y=\pm 2\pi q'x$. %The reasoning behind this is that these are the regions on which we will define a quasiconformal map, and we shall do so by first conjugating by $log$. Circular arcs then become vertical strips, which will be crucial for us.\\ \\
We will also require the boundary of $\Delta_q$ to be symmetric about the $x$ -  axis in Figure~\ref{fig:triangle}. Additionally, let
\begin{align*}
     \Delta_{q'} & = \Delta_q \cap \{|y| < 2\pi  q'x\}\\
    \Delta_{p} & = \Delta_q \cap \{|y| < 2\pi  px\}
\end{align*}
For $\ell \in \{0,1,...,d\}$, define
\begin{align}
    S_1\Big(\frac{\ell}{d+1}\Big) &= \omega^\ell \varphi_c^{-1} \circ \exp (\Delta_{q}) \label{eqn:s1defn}\\
    Q_1\Big(\frac{\ell}{d+1}\Big)&= \omega^\ell\varphi_c^{-1} \circ \exp (\Delta_{q'}) \label{eqn:q1defn}\\
    T_1\Big(\frac{\ell}{d+1}\Big) &=\omega^\ell \varphi_c^{-1} \circ \exp (\Delta_{p}) \label{eqn:t1defn}\\
    Y_1\Big(\frac{\ell}{d+1}\Big)&=\omega^\ell\varphi_c^{-1} \circ \exp (\Delta_{q}\setminus \overline{\Delta_{q'}}) = S_1\Big(\frac{\ell}{d+1}\Big) \setminus \overline{Q_1\Big(\frac{\ell}{d+1}\Big)}\label{eqn:y1defn}
\end{align}
Clearly,
\begin{align*}
    T_1\big(\frac{\ell}{d+1}\big) \subset Q_1\big(\frac{\ell}{d+1}\big) \subset S_1\big(\frac{\ell}{d+1}\big) \subset \widetilde{S}_1\big(\frac{\ell}{d+1}\big)
\end{align*}
On the slit disk $W_0 \setminus \mathcal{R}_0$, we define $V_0$ as follows:
\begin{align}
    V_0 & = f_c\Big(W_1 \setminus \bigcup_{\ell = 0}^d S_1\Big(\frac{\ell}{d+1}\Big)\Big) \cup f_c(Y_1(0))  \label{eqn:vodefn}
\end{align}
See Figure~\ref{fig:riemannmap} for details.

\begin{figure}
    \centering
    \includegraphics[scale=0.24]{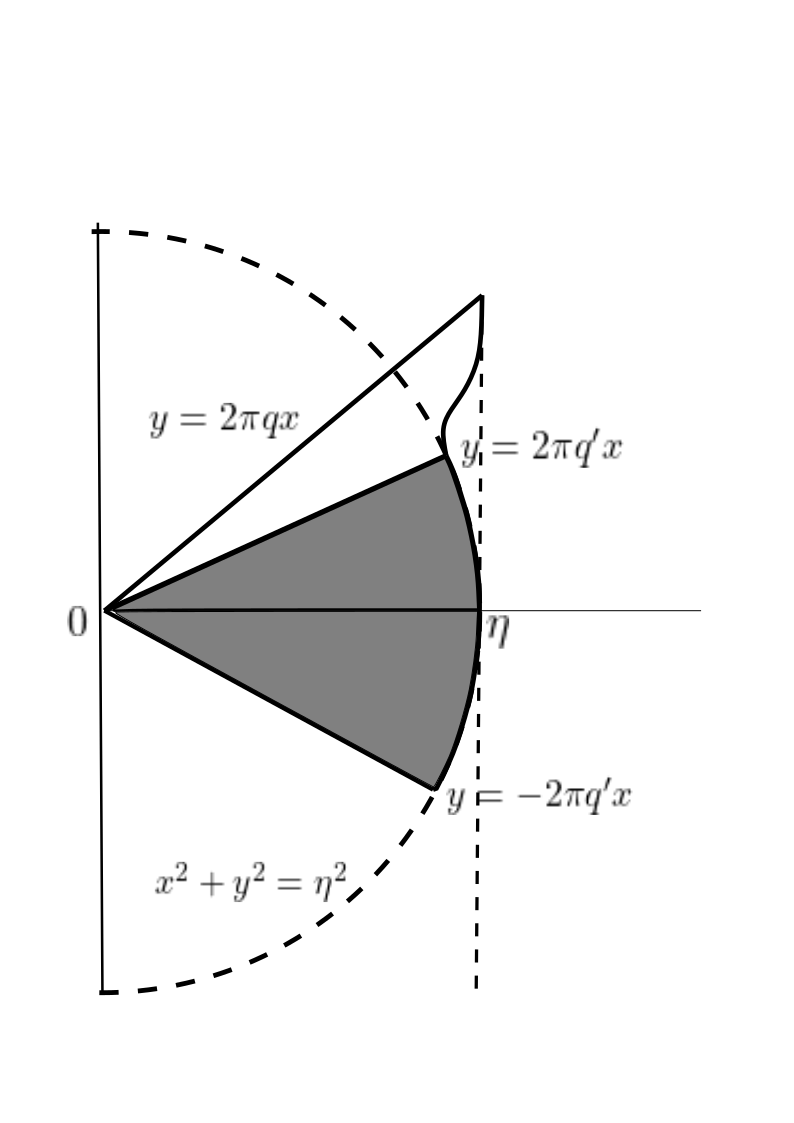}
    \caption{The darkly shaded region above  indicates the set $\Delta$- the image under $\hat{h}$ of a connected component of $\Delta_{q'} \setminus \overline{\Delta_{p}}$}
    \label{fig:Delta}
\end{figure}
By the Riemann mapping theorem, there exist analytic maps $\widetilde{k}: \mathbb{D} \longrightarrow V_0$ and  $\widetilde{m}: T_1(0)  \longrightarrow \mathbb{D}$ such that
\begin{align*}
    \widetilde{k}(\widetilde{m}( \beta))&= \beta\\
    \widetilde{k}(\widetilde{m}(z_1)) &=  \widetilde{z}_1\\
    \widetilde{k}(\widetilde{m}(z_2))&=\widetilde{z}_2
\end{align*}
Let 
$\widetilde{h} = \widetilde{k}\circ \widetilde{m}$. The map $\widetilde{h}:T_1(0) \longrightarrow V_0$ is the unique analytic function that sends the triple $(\beta,z_1,z_2)$ to $(\beta, \widetilde{z}_1,\widetilde{z}_2)$ (see Figure~\ref{fig:riemannmap}). $\partial V_0, \partial T_1(0)$ are quasi-circles. Therefore, $\widetilde{k},\widetilde{m}$ extend to quasisymmetric maps on the boundaries of their respective domains. %Thus $\widetilde{h}$ has a quasisymmetric extension from $\partial T_1(0)$ to $\partial V_0$. 
\noindent Furthermore, by \cite[Chapter~3.4, Exercise 1]{pommerenke},
\begin{align*}
    \widetilde{k}(z) &= \beta + a_k(z-1)^{2-4q'} + O(|z-1|^{2-4q' + (2-4q')\gamma_k})\\
        \widetilde{m}^{-1}(z) &= \beta + a_m(z-1)^{4p} + O(|z-1|^{4p + 4p\gamma_k})
\end{align*}
for some $\gamma_k, \gamma_m \in (0,1), a_k, a_m \in \C$.

It then follows that
\begin{align*}
    \widetilde{h}(z) = \beta + a_h\big(z-\beta\big)^{\frac{1-2q'}{2p}} + O\Big(\big|z-\beta\big|^{\frac{1-2q'}{2p} + \big(\frac{1-2q'}{2p}\big) \gamma_h}\Big)
\end{align*}
for some $\gamma_h \in (0,1), a_h \in \C$. Since $\varphi_c$ does not distort angles, conjugating $\tilde{h}$ by $\varphi_c$ should not change this equation, and we have the following:
\begin{prop}\label{boundarybeh}
For all $z \in \exp(\Delta_p) = \varphi_c(T_1(0))$,
\begin{align}
    \varphi_c \circ \widetilde{h} \circ \varphi_c^{-1}(z) = 1 + a'\big(z-1\big)^{\frac{1-2q'}{2p}} + O\Big(\big|z-1\big|^{\frac{1-2q'}{2p}+ (\frac{1-2q'}{2p})\gamma}\Big) \label{eqn1}
\end{align}
for some $\gamma \in (0,1), a' \in \C \setminus \{0\}$. 
\end{prop}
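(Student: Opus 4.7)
The plan is to compose the known expansion of $\widetilde{h}$ at $\beta$ with the corresponding power-law expansions of $\varphi_c$ and $\varphi_c^{-1}$ at the boundary points $\beta$ and $1$ respectively, and to verify that the inverse exponents introduced by the conjugation cancel, leaving precisely $\frac{1-2q'}{2p}$.

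First, I would apply the same Pommerenke corner asymptotic (Ch.~3.4, Ex.~1) used for $\widetilde{k}$ and $\widetilde{m}$ to the conformal bijection $\varphi_c^{-1}: \exp(\Delta_p) \to T_1(0)$, whose boundaries meet at corresponding corners $1$ and $\beta$. This yields
\begin{align*}
    \varphi_c^{-1}(w) = \beta + c_1(w-1)^\mu + O\big(|w-1|^{\mu(1+\gamma_1)}\big)
\end{align*}
for some $\mu > 0$, $c_1 \in \C^*$, $\gamma_1 \in (0,1)$. Local inversion of this relation gives
\begin{align*}
    \varphi_c(z) = 1 + c_2(z-\beta)^{1/\mu} + O\big(|z-\beta|^{(1/\mu)(1+\gamma_2)}\big)
\end{align*}
for corresponding constants $c_2 \in \C^*$ and $\gamma_2 \in (0,1)$.

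Second, compose. Writing $v = \varphi_c^{-1}(w)$ and letting $E = \frac{1-2q'}{2p}$, the hypothesis on $\widetilde{h}$ combined with a binomial expansion of $(c_1(w-1)^\mu + \text{error})^E$ yields
\begin{align*}
    \widetilde{h}(v) - \beta = a_h(v-\beta)^E + O\big(|v-\beta|^{E(1+\gamma_h)}\big) = a_h c_1^E(w-1)^{\mu E} + O\big(|w-1|^{\mu E(1+\gamma_3)}\big)
\end{align*}
for some $\gamma_3 > 0$ obtained by keeping the weaker of the propagated errors. Applying the expansion of $\varphi_c$ at $\beta$, the factors $\mu$ and $1/\mu$ cancel in the leading exponent to give
\begin{align*}
    \varphi_c \circ \widetilde{h} \circ \varphi_c^{-1}(w) = 1 + a'(w-1)^E + O\big(|w-1|^{E(1+\gamma)}\big),
\end{align*}
with $a' = c_2(a_h c_1^E)^{1/\mu} \in \C^*$ and $\gamma \in (0,1)$ combining $\gamma_1, \gamma_2, \gamma_3$. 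This is exactly the claimed estimate.

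The main obstacle is justifying the Pommerenke-type expansion for $\varphi_c^{-1}$ near $1$. Unlike $\widetilde{k}, \widetilde{m}$, which are classical Riemann maps of $\D$ onto Jordan domains with manifestly smooth boundaries, $\varphi_c^{-1}$ reaches an \emph{a priori} irregular boundary point, namely the Julia-set point $\beta$. The required regularity is guaranteed by combining the smoothness of $\partial \Delta_p$ away from $0$ (built into its construction) with the repelling dynamics of $f_c$ at $\beta$: the functional equation $\varphi_c \circ f_c = \varphi_c^{d+1}$ together with the Koenigs linearizer at the repelling fixed point $\beta$ forces the clean power law with exponent $\mu = \log|\lambda|/\log(d+1)$, where $\lambda = f_c'(\beta)$, and supplies a controlled Hölder remainder $\gamma_1$. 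Once this expansion is in hand, the remainder of the argument is a routine chain of power-series manipulations, and the punch line is exactly the author's informal observation that $\varphi_c$ uniformly rescales angles at $\beta$, so the ratio of corner angles that determines the Pommerenke exponent is preserved.
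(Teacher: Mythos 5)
Your write-up correctly isolates the real difficulty, which the paper itself dispatches in one sentence (``$\varphi_c$ does not distort angles at $\beta$''): namely, the regularity of the B\"ottcher chart at the Julia-set point $\beta$. However, the step you use to resolve it does not actually deliver the expansion you need, and this is a genuine gap. First, the Pommerenke corner asymptotic cannot be applied to $\varphi_c^{-1}:\exp(\Delta_p)\to T_1(0)$ as you propose, because that theorem describes a Riemann map in terms of the corner geometry of its \emph{image} domain; to invoke it you would already need to know that $\partial T_1(0)$ has a H\"older-smooth corner of definite opening at $\beta$ --- which is precisely the unknown. Second, the Koenigs-linearizer argument you fall back on does not force a clean power law. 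Writing $L=\log\varphi_c$ and $\kappa$ for the linearizer at $\beta$, the functional equations $L\circ f_c=(d+1)L$ and $\kappa\circ f_c=\lambda\kappa$ only determine $\kappa\circ L^{-1}$ up to the relation $F((d+1)u)=\lambda F(u)$, whose general solution is $F(u)=u^{\rho}P(\log u)$ with $\rho=\log\lambda/\log(d+1)$ and $P$ multiplicatively periodic. The exponent is the \emph{complex} number $\rho$ (so for non-real multiplier the bounding curves spiral into $\beta$ and no real-exponent power law with a tangent direction can hold), and even for $\lambda>0$ the periodic factor $P$ is generically non-constant. Tracking such a factor through your composition, the ratio $\big(\varphi_c\circ\widetilde{h}\circ\varphi_c^{-1}(w)-1\big)/(w-1)^{\frac{1-2q'}{2p}}$ comes out as $a_h^{1/\rho}\,P(\log u)^{E/\rho}\,Q(\rho E\log u+O(1))$ with $E=\frac{1-2q'}{2p}$ and $Q$ another periodic function: it is bounded above and below but oscillates rather than converging to a single $a'\in\C^*$. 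So the cancellation of $\mu$ and $1/\mu$ in the leading exponent is fine, but the existence of the leading \emph{coefficient} --- the actual content of the proposition --- is not established by this route.

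Two remarks to put this in context. The algebraic part of your argument (composing three one-term expansions and propagating the weakest H\"older error) is correct and is essentially what any complete proof must do; the defect is entirely in the input expansion for $\varphi_c^{\pm1}$. And what your linearizer argument \emph{does} prove is the two-sided comparability $|\varphi_c\circ\widetilde{h}\circ\varphi_c^{-1}(w)-1|\asymp|w-1|^{\frac{1-2q'}{2p}}$ on the sector, since the periodic factors are bounded away from $0$ and $\infty$; this weaker statement is all that the subsequent boundedness argument for $|H(z)-z|$ in Lemma~\ref{lemma:quasiexists} actually uses. If you want the proposition as literally stated, you must either justify a Dini/H\"older-smooth corner for $\partial T_1(0)$ at $\beta$ (equivalently, conformality of $\varphi_c$ at $\beta$ with a nonzero angular derivative in the appropriate power-law sense), or reformulate the proposition as the two-sided estimate.
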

\begin{comment}
\begin{proof}
This is because $\varphi_c$ does not distort angles at $\beta$.\\

We will also remark here that this shows that 
\begin{align*}
    |\varphi_c \circ \widetilde{h} \circ \varphi_c^{-1}(z)| \geq |C|z-1|^{\frac{1-2q'}{p}}-1|
\end{align*}
for some constant $C \neq 0$.
\end{proof}
\end{comment}
Let $G$ be a connected component of $\Delta_{q'} \setminus \overline{\Delta_p}$, say the one bounded by $y=2\pi q'x$ and $y=2\pi  p x$. The polynomial $f_c$ induces the map $\mu_{d+1}(z)= (d+1)z$ on the part of its boundary where $y = 2\pi q'x$. The map $\hat{h} = (\log \circ  \varphi_c) \circ \widetilde{h} \circ ( \varphi_c^{-1}  \circ \exp)$ extends to a continuous map on the part of the boundary where $y= 2\pi px$.

Let $\Delta$ be the set $\Delta_{q'} \cap\{ x^2+y^2< \eta^2\}$(see Figure~\ref{fig:Delta} for an illustration of $\Delta$). $S(0):=\varphi_c^{-1} \circ \exp(\Delta)$ is an open subset of $\widetilde{S}(0)$. For $\ell \in \{1,2,...,d\}$, let $S(\frac{\ell}{d+1}) = \omega^\ell S(0)$. The sector $S\big(\frac{\ell}{d+1}\big)$ contains $S_1\big(\frac{\ell}{d+1}\big)$.\\

The following is a crucial lemma.
\begin{figure}
    \centering
    \includegraphics[scale=0.5]{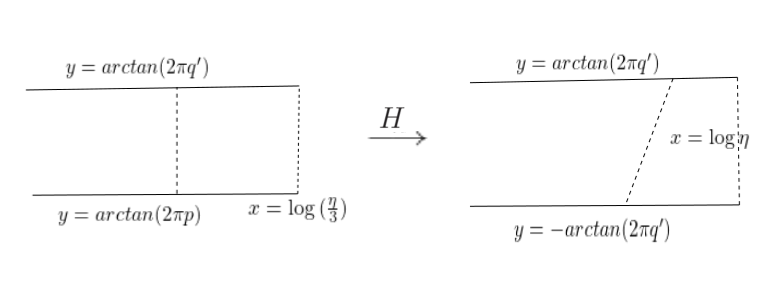}
    \caption{The map $H$ is defined by mapping vertical lines to lines joining the images of their endpoints}
    \label{fig:maponstrips}
\end{figure}
\begin{lemma}\label{lemma:quasiexists}
There exists a quasiconformal map from $G$ to $\Delta$ that restricts to $\mu_{d+1}$ on one boundary, and to $\hat{h}$ on the other boundary.
\end{lemma}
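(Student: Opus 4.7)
The plan is to construct $H$ explicitly via the interpolation scheme suggested by Figure~\ref{fig:maponstrips}, and then verify that the resulting map is quasiconformal.

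First, I would pass to coordinates in which $G$ and $\Delta$ become half-strips. Let $\Psi_G \colon G \to S_G$ and $\Psi_\Delta \colon \Delta \to S_\Delta$ be conformal maps sending the two angular ray boundaries to the horizontal edges of the half-strips, the outer arcs to the right edge, and the cone point at the origin to the end at $-\infty$. Near the cone point these conformal maps are, up to affine rescaling, given by $\log$; the appropriate rescaling matches the angular widths $\beta_+ - \beta_-$ (for $G$) and $2\beta_+$ (for $\Delta$), where $\beta_+ = \arctan(2\pi q')$ and $\beta_- = \arctan(2\pi p)$.

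Next, I would compute the prescribed boundary maps in the strip coordinates. The map $\mu_{d+1}(w) = (d+1)w$ becomes, asymptotically as $\mathrm{Re}(s) \to -\infty$, an affine map on the top edge of $S_G \to S_\Delta$, and by Proposition~\ref{boundarybeh} the map $\hat h(w) = a'w^\alpha + O(|w|^{\alpha(1+\gamma)})$ becomes an affine map on the bottom edge, with slope $\alpha = (1-2q')/(2p)$, up to a Hölder-type error that decays as we move toward the cone point. With this setup, I would define $H$ by the rule illustrated in Figure~\ref{fig:maponstrips}: each vertical segment $\{\mathrm{Re}(s) = x\}$ in the source half-strip is mapped to the straight line segment in the target half-strip joining the two endpoint images under the respective boundary maps, parameterized linearly in the imaginary coordinate. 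Pulling back by $\Psi_\Delta^{-1} \circ (\cdot) \circ \Psi_G$ gives a candidate map on $G$ with the correct boundary behavior.

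The main obstacle is verifying quasiconformality. A direct Jacobian computation gives positivity of orientation, but when $\alpha \neq 1$ the partial derivative of the interpolation in the vertical direction picks up a term growing linearly with $|\mathrm{Re}(s)|$, reflecting the mismatch between the translation-like behavior of $\mu^*_{d+1}$ on the top and the scaling-by-$\alpha$ behavior of $\hat h^*$ on the bottom. The hard part will therefore be to choose the parameterization of the vertical interpolation, and the precise form of the conformal rectifications $\Psi_G, \Psi_\Delta$, so as to absorb this growth; the Hölder control on the remainder term supplied by Proposition~\ref{boundarybeh} plays an essential role, giving us enough decay near the cone point for the Beltrami coefficient to stay bounded strictly below $1$ uniformly on $S_G$. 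Once the interior dilatation is controlled, smoothness of the boundary maps away from the cone point handles the remainder of $G$ by compactness, completing the proof.
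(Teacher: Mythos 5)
Your setup---rectify $G$ and $\Delta$ to half-strips by logarithms and interpolate linearly along vertical segments between the two prescribed boundary maps---is exactly the paper's. But your write-up stops at the one step that is the actual content of the lemma. You correctly note that if the bottom boundary map behaves in the half-strip coordinate like $z \mapsto \alpha z$ with $\alpha = \frac{1-2q'}{2p} \neq 1$, while the top boundary map is the translation $z \mapsto z + \log(d+1)$, then the vertical partial derivative of the interpolation grows like $|\alpha - 1|\,|\mathrm{Re}(z)|$ and the dilatation is unbounded. Your proposed remedy---choosing the vertical parameterization and the rectifying maps ``so as to absorb this growth,'' with the H\"older remainder of Proposition~\ref{boundarybeh} doing the work---cannot succeed as stated. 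If corresponding points on the top and bottom edges (which lie at the fixed distance $\theta_{q'} - \theta_p$ from one another) are sent to points whose mutual distance is unbounded, then the moduli of the quadrilaterals $[x, x+1] \times [\theta_p, \theta_{q'}]$ degenerate under the map as $x \to -\infty$, and \emph{no} quasiconformal extension with those boundary values exists, regardless of how the interior is parameterized. The H\"older term only controls the deviation of $\hat h$ from its leading power law; it does not cancel the leading-order mismatch you describe. Unbounded boundary displacement is an obstruction, not a parameterization artifact.

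The paper's proof turns on a point your proposal is missing: Proposition~\ref{boundedlines}, the uniform bound $|H(z) - z| \leq R$ on the bottom edge (Equation~\ref{eqn2}), which it extracts from the expansion in Proposition~\ref{boundarybeh} by a contradiction argument. Granting that estimate, both boundary maps are bounded perturbations of translations, the interpolated vertical segments have uniformly bounded shear, and quasiconformality of the linear interpolation is immediate. So the concrete gap is the absence of any displacement estimate of this form. Note also that your asymptotic reading of Proposition~\ref{boundarybeh} (genuine scaling by $\alpha \neq 1$ in the strip coordinate, hence linearly growing displacement) and the bound $|H(z)-z| \leq R$ are mutually exclusive; your proposal currently asserts the former while still claiming the conclusion. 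To complete the argument you must either prove a uniform displacement bound for $\hat h$ in the half-strip coordinates, or explain why the exponent $\frac{1-2q'}{2p}$ does not produce the growth you describe---one of the two, but you cannot have both.
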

\begin{proof}
$G$ has a positive angle at the vertex $0$.
In $\log$ coordinates, $G' = \log G$ is a half-infinite horizontal strip with $\hat{\mu}_{d+1}(z) = z+\log (d+1)$ induced by $\mu_{d+1}$ on the part of the boundary where $y = \arctan(2 \pi q')$, and $H(z) = \log \hat{h}(e^z)$ on the part of the boundary where $y = \arctan(2 \pi p)$.

We will interpolate between $\hat{\mu}_{d+1}$ and $H$ by mapping vertical lines in $G'$ to lines joined by the images of the endpoints. If we can show that these image lines have uniformly bounded slope, the resulting map will be quasiconformal. We explain this in detail below.

Set $\theta_p = \arctan(2 \pi p), \theta_{q'} = \arctan(2\pi q').$ We will define $H$ on $\overline{G'}$ by extending along vertical lines:
\begin{align*}
    H\Big(x+i\big((1-t)\theta_p +t \theta_{q'}\big)\Big) & = (1-t)H\big(x + i\theta_p\big) + t\hat{\mu}_{d+1}\big(x+i \theta_{q'}\big)  
\end{align*}

\begin{prop}\label{boundedlines}
There exists $R > 0$ such that for all $z \in \{Im(z) = \arctan(2\pi p)\} \cap \overline{G'}$, 
\begin{align}
    |H(z) - z| \leq R \label{eqn2}
\end{align}
\end{prop}
\begin{proof}
We will prove this by showing that both $z \mapsto z - H(z)$ and $z \mapsto H(z)-z$ are bounded above.

Suppose $z-H(z)$ is not bounded above, then for each natural number $n$, there exists $z_n$ such that 
\begin{align*}
    z_n - H(z_n) &> n
\end{align*}
and upto a subsequence, the $z_n$  tend to $ -\infty$.
But this implies that 
\begin{align*}
    |\hat{h}(u_n)|&< \frac{|u_n|}{e^n}
\end{align*}
where $u_n = \exp z_n$. Furthermore,
\begin{align*}
    Re(\hat{h}(u_n)) &\leq |\hat{h}(u_n)| \\& \leq  \frac{|u_n|}{e^n} = \frac{1}{e^n}\sqrt{Re(u_n)^2(1+4\pi^2 p^2)} \\& \leq C\frac{|Re(u_n)|}{e^n} =C\frac{Re(u_n)}{e^n}
\end{align*}
for some constant $C>0$. \\

Set $w_n = \exp {u_n}$, and note that $\exp \hat{h}(u_n) = \varphi_c \circ \widetilde{h}\circ \varphi_c^{-1}(w_n)$. Thus
\begin{align}
    |\varphi_c \circ \widetilde{h}\circ \varphi_c^{-1}(w_n)| < b|w_n|^{\frac{1}{e^n}} \label{eqn3}
\end{align}
for some $b>0$. 

But Equation~\ref{eqn3} implies that $|\varphi \circ \widetilde{h}\circ \varphi_c^{-1}(w_n)|$ converges much faster to $1$ than allowed by Equation~\ref{eqn1}, and forms a contradiction. This proves that $z - H(z)$ is bounded above.

Similarly, suppose $H(z)-z$ is not bounded above as $z \rightarrow -\infty$,  there exists a sequence $z_n \rightarrow -\infty$ such that 
\begin{align*}
    H(z_n) - z_n &> n
\end{align*}
Thus
\begin{align*}
    |\hat{h}(u_n)| &> |u_n|e^n
\end{align*}
Consequently, we have 
\begin{align*}
    Re\hat{h}(u_n) &= \frac{|\hat{h}(u_n)| }{\sqrt{1+4\pi^2q'^2}} \\
    &> \frac{|u_n|e^n}{\sqrt{1+4\pi^2q'^2}} \\
    &= \frac{\sqrt{1+4\pi^2p^2}}{\sqrt{1+4\pi^2q'^2}}Re(u_n)e^n
\end{align*}
But this gives us 
\begin{align*}
    |\varphi_c \circ \widetilde{h}\circ \varphi_c^{-1}(w_n)| > \iota |w_n|^{e^n}
\end{align*}
for some constant $\iota >0$, which also contradicts Equation~\ref{eqn1}.

This proves Proposition~\ref{boundedlines}.
\end{proof}
\begin{figure}
    \centering
    \includegraphics[scale=0.4]{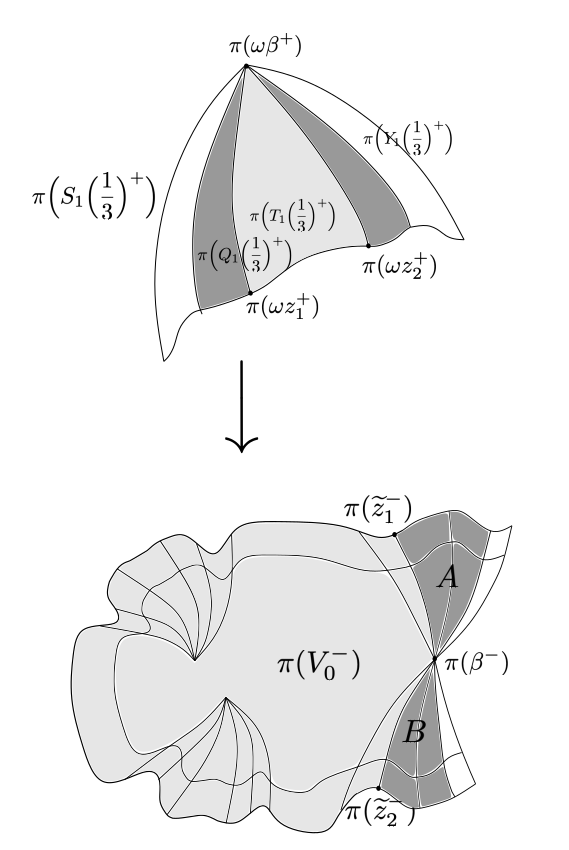}
    \caption{An illustration of $h$ on $ \pi\big(S_1\big(\frac{1}{3}\big)^+\big)$; the dark components  above are quasiconformally mapped to the dark components below, the lightly shaded region maps by the Riemann map $\omega^{-1}\widetilde{h}$, and the white region maps by $f$ }
    \label{fig:quasimap}
\end{figure}
It is clear that $H$ interpolates between the maps on the two horizontal boundaries, and that $H(G' \cap \{Re(z) = \log(\frac{\eta}{3})\}) = \{\log \eta + iy : |y| < \arctan(2 \pi q')\}$. Furthermore, $H$ is a quasiconformal map whose dilitation is bounded above by some $M\geq 1$ (see Figure~\ref{fig:maponstrips}): this is because vertical lines in the domain are mapped by $H$ to lines whose slopes are bounded below by some uniform constant, by Equation~\ref{eqn2}. 
We conjugate $H$ by the exponential map to obtain a quasiconformal map $\hat{h}$ from $G$ that satisfies the properties in the statement of Lemma~\ref{lemma:quasiexists}.
\end{proof}

$G$ could be taken to be either connected component of $\Delta_{q'} \setminus \overline{\Delta_{p}}$. On the dynamical plane, it corresponds to a component $\mathcal{G}$ of $Q_1(0) \setminus \overline{T_1(0)}$. We shall henceforth denote the copy of $S(0)$ in $\widetilde{A}$ as $A$, and the copy in $\widetilde{B}$ as $B$. With this in mind, we will take $h_{\mathcal{G},A}$ to mean the map $ (\omega^\ell \circ \varphi_c^{-1} \circ \exp) \circ \hat{h} \circ (\log \circ  \varphi_c \circ \omega^{-\ell})$ from the component $\mathcal{G}$ of $Q_1\big(\frac{\ell}{d+1}\big) \setminus T_1\big(\frac{\ell}{d+1}\big)$ to $A$, and $h_{\mathcal{G},B}$ to mean the same map, but from $\mathcal{G}$ to $B$. We will use the same names for the extended maps from $\overline{\mathcal{G}}$.
\subsubsection{Constructing a quasiregular map $g$}
Let $S$ be a sector of the form $\pi\Big(S^+_1\big(\frac{\ell}{d+1}\big)\Big)$, where $\ell \in \{1,2,...,d+1\}$. The map $f$ has a line of discontinuities in $S$ along the ray $\pi\Big(\mathcal{R}^+_{\frac{\ell}{d+1}}\Big)$- on one side of this ray, $f$ maps into $\widetilde{A}$ and approaches $\pi(\mathcal{R}_0^{\widetilde{A}})$, whereas on the other side, $f$ maps into $\widetilde{B}$ and  approaches $\pi(\mathcal{R}_0^{\widetilde{B}})$. Define a map $h$ on $\overline{S}$ as follows:
\begin{itemize}
    \item On $\pi\Big(\overline{T^+_1\big(\frac{\ell}{d+1}\big)}\Big)$, let $h(\pi(z^+)) = \pi(\widetilde{h}(\omega^{-\ell}z)^-) \in \pi(V_0^-)$.
    \item On $\pi\Big(\overline{Y_1^+\big(\frac{\ell}{d+1}\big)}\Big)$, let $h(\pi(z^+)) = f(\pi(z^+))$, where $f$ is defined as in Section~\ref{sec:fdefn}.
    \item On the connected component $\mathcal{G}$ of $\pi\Big(Q^+_1\big(\frac{\ell}{d+1}\big) \setminus T^+_1\big(\frac{\ell}{d+1}\big)\Big)$ with $\ell=1,2,...,d$ part of whose boundary $f$ maps into $\partial \widetilde{A}$, let $h(\pi(z^+)) = h_{\mathcal{G},A}(z)$.
    \item On the connected component $\mathcal{G}$ of $\pi\Big(Q^+_1\big(\frac{\ell}{d+1}\big) \setminus T^+_1\big(\frac{\ell}{d+1}\big)\Big)$ with $\ell=1,2,...,d$ part of whose boundary  $f$ maps into $\partial \widetilde{B}$, let $h(\pi(z^+)) = h_{\mathcal{G},B}(z)$.
\end{itemize}
\begin{figure}
\centering
\begin{subfigure}[b]{0.65\textwidth}
    \includegraphics[width=\textwidth]{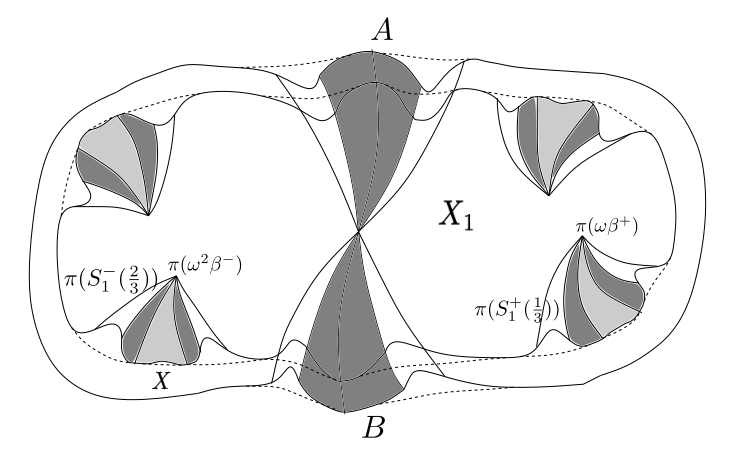}
    \caption{The Riemann surfaces $X_1$, $X$}
    \label{fig:dynplanemodified1}
\end{subfigure}\\
\begin{subfigure}[b]{0.65\textwidth}
    \includegraphics[width=\textwidth]{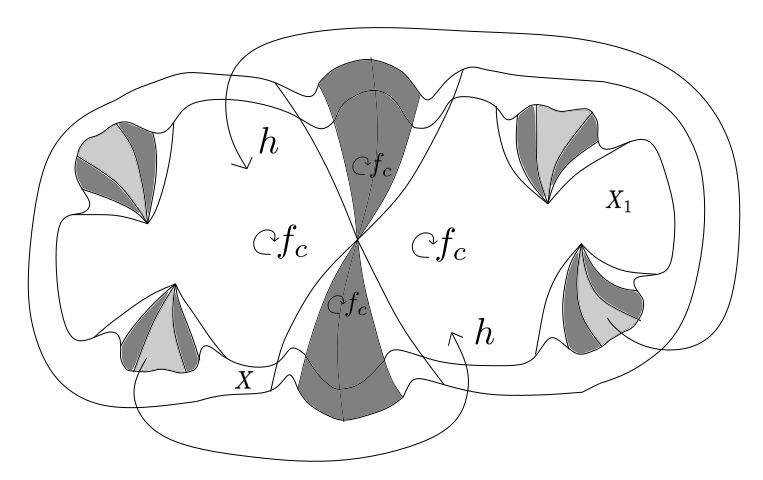}
    \caption{The quasiregular map $g$}
    \label{fig:dynplanemodified2}
\end{subfigure}
\label{fig:dynplanemodified}
\caption{The dynamics of a quasiregular model for $\Phi_d(c)$}
\end{figure} 

The map $h$ so defined is a quasiconformal homeomorphism from $S$ to $\pi(V_0^-) \cup A \cup B$ (see Figure~\ref{fig:quasimap} for an illustration of $h$ on $\pi\Big(S\big(\frac{1}{3}\big)^+\Big)$ when $d=2$), and  restricts to an analytic map on $\pi\Big(T_1\Big(\frac{\ell}{d+1}\Big)\Big)$.

Furthermore, the latter set has smooth boundary at the points $\pi(\widetilde{z}_1^-)$ and $\pi(\widetilde{z_2}^-)$ in Figure~\ref{fig:quasimap}: consider $\pi(\widetilde{z}_1^-)$ for instance. In B\"{o}ttcher coordinates, the boundary in a neighborhood of  $\widetilde{z}_1$ looks like $f_c(\varphi_c^{-1} \circ \exp(\gamma))$, where $\gamma$ is a neighborhood of the boundary of $\Delta_{q}$ at the point $\log \circ \varphi_c(z_1)$; $\gamma$ is clearly smooth. 

On $\pi\Big(S_1^{-}\big(\frac{\ell}{d+1}\big)\Big)$, we define $h$ the same way, except with the following change: on $\pi\Big(\overline{T_1^-\big(\frac{\ell}{d+1}\big)}\Big)$, let $h(\pi(z^-)) = \pi(\widetilde{h}(\omega^{-\ell}z)^+) \in \pi(V_0^+)$. This $h$ is a quasiconformal hoemoemorphism from $S$ to $\pi(V_0^+) \cup A \cup B$.
Finally, we construct a quasiregular map on newly defined subsets of $\widetilde{X}_1, \widetilde{X}$. \\
Let 
\begin{align*}
    X & = \pi(V_0^+) \cup A \cup B \cup \pi(V_0^-)
\end{align*}
Also let $X_1$ be the subset of $\widetilde{X_1}$ where all sectors of the form $\pi\Big(\widetilde{S}^\pm_1\big(\frac{\ell}{d+1}\big)\Big)$, $\pi\Big(\widetilde{S}^-_1\big(\frac{\ell}{d+1}\big)\Big)$ are replaced by $\pi\Big(S_1^\pm\big(\frac{\ell}{d+1}\big)\Big)$ for  $\ell=1,2,..,d$, and let $X$ be the open subset of $\widetilde{X}$ where $\widetilde{A}$ and $\widetilde{B}$ are replaced by $A$, $B$ respectively. See Figure~\ref{fig:dynplanemodified1} for details.
\noindent Clearly, $X_1$ is an open subset of the Riemann surface $X$ compactly contained in $X$. We define
\begin{align*}
    g&: X_1 \longrightarrow X\\
    g(\pi(z)) & = \begin{cases}
    f(\pi(z)) & z \in \Big(W^+_1 \setminus \bigcup _\ell S^+_1\big(\frac{\ell}{d+1}\big)\Big) \cup \Big(W^-_1 \setminus \bigcup_\ell S^-_1\big(\frac{\ell}{d+1}\big)\Big),\ell=1,2,...,d \\
    h(\pi(z)) & z \in S^\pm_1\big(\frac{\ell}{d+1}\big), \ell=1,2,...,d\\
    \end{cases}
\end{align*}
See Figure~\ref{fig:dynplanemodified2} for an illustration of $g$.\\ 

$g$ is quasiregular. Furthermore, any $g$ orbit visits $\pi\Big(Q^+_1\big(\frac{\ell}{d+1}\big) \setminus T^+_1\big(\frac{\ell}{d+1}\big)\Big)$ or $\pi\Big(Q^-_1\big(\frac{\ell}{d+1}\big) \setminus T^-_1\big(\frac{\ell}{d+1}\big)\Big)$ at most once, and  these are the only regions where $g$ is not analytic. 
We will use this fact to define a new complex structure $\sigma$ (given by an ellipse field $E_x$ for $x \in X$) by setting
\begin{itemize}
    \item $E_x = \mathbb{S}^1$ if $x \in X \setminus X_1$ or if the orbit of $x$ never visits  $\pi\Big(Q^+_1\big(\frac{\ell}{d+1}\big)\setminus T^+_1\big(\frac{\ell}{d+1}\big)\Big)$ or $\pi\Big(Q^-_1\big(\frac{\ell}{d+1}\big) \setminus T^-_1(\frac{\ell}{d+1}\big)\Big)$
\item $E_x = (T_xg)^{-1}(\mathbb{S}^1)$ for $x \in \pi\Big(Q^+_1\big(\frac{\ell}{d+1}\big)\setminus T^+_1\big(\frac{\ell}{d+1}\big)\Big)$ or $x \in \pi\Big(Q^-_1\big(\frac{\ell}{d+1}\big) \setminus T^-_1\big(\frac{\ell}{d+1}\big)\Big)$ for some $\ell \in \{1,2,...,d\}$
    \item $E_x = (T_x g^{n})^{-1}(E_{g^n(x)})$ if $g^n(x)$ is the first point in the $g-$orbit of $x$ that is in one of the regions above.
\end{itemize}
The complex structure $\sigma$ thus defined has bounded dilitation, and $g^*\sigma = \sigma$.
\subsubsection{Obtaining a polynomial}
Define the map $\tau: X \longrightarrow X$ by sending $\pi(z^+)$ to $\pi(z^-)$. $\tau$ satisfies $\tau^*\sigma = \sigma$, $\tau(X_1) = X_1$, and $\tau^{\circ 2} = id$. We note that
\begin{align*}
    g \circ \tau = \tau \circ g 
\end{align*}

We find an integrating map $\psi$ for $\sigma$ sending $\pi(\beta^\pm)$ to $0$, and satisfying $\frac{\psi(z)}{z} \longrightarrow 1$ as $z \longrightarrow \infty$. 
The map $G= \psi \circ g\circ \psi^{-1} : U' \longrightarrow U$ is polynomial-like, and has two critical points with local degree $d+1$. $\kappa = \psi \circ \tau \circ \psi^{-1}$ an analytic involution of $U$, and commutes with $G$ on $U$. We can further conjugate by a Riemann map taking the pair $(U,0)$ to $(\D,0)$. By the Schwarz lemma, we can assume without loss of generality that $\kappa\big |_U(z) = -z$; in particular, $\kappa$ has a global extension.

$G$ is hybrid equivalent to a degree $2d+1$ polynomial $p$ with two critical points by the straightening theorem. We may choose this hybrid equivalence $h:\C \longrightarrow \C$ such that $h(0) = 0$. Then
$\delta = h \circ \kappa\circ  h^{-1}$ is an affine map of $\C$ with $\delta(0) = 0, \delta^{\circ 2}  = id, \delta \neq id$. Therefore, $\delta(z) = -z$. $p$ commutes with $\delta$, and can now be normalized to the form $a{\displaystyle \int_{0}^{z} } \Big(1-\frac{w^2}{d}\Big)^ddw$ for a unique $a \in \C^*$. The choice of $a$ does not depend on our initial choice of $q,\eta,\psi$ ($h$ is determined by  $c,p,q,q',\eta$) - we can show that different choices give rise to hybrid equivalent polynomials.
\subsection{The image of $\Phi_d$}
Clearly, $\Phi_d(\mathcal{M}_{d+1}) \subset \mathcal{CBO}_d$. Let $a=\Phi_d(c)$. By our construction, $0$ is a fixed point of $p_{a}$ belonging to the Julia set, and it disconnects the Julia set into two components. 

Under our surgery, the original dynamical ray $\mathcal{R}_0$ landing at $\beta$ gets transformed into an arc $\Gamma$ from $0$ to $\infty$ in the dynamical plane of $p_{\Phi_d(c)}$ whose interior is contained in the escaping set. In the monic representation $P_{s(a)}$ of $p_a$, $\Gamma$ has the same access as $\mathcal{R}_{s(a)}(0)$. The union  $\Gamma \cup -\Gamma$, and indeed $\mathcal{R}_0(s(a)) \cup \mathcal{R}_\frac{1}{2}(s(a))$, separates the orbits of the two critical points of $P_{s(a)}$. That is, $\Phi_d(\mathcal{M}_{d+1}) \subset \mathcal{CBO}^{(+,-)}_d$. We will show in Section~\ref{section:injectivity} that the image under $\Phi_d$ is equal to this set.
\section{Continuity of $\Phi_d$}\label{section:continuity}
To show continuity of $\Phi_d$, we will follow the strategy laid out in \cite[Chapter~II.8]{10.1007/BFb0081395}, and show it separately when $c$ is on the boundary, or in the interior of $\mathcal{M}_{d+1}$.  Throughout this section, we shall index all sets and functions in Section~\ref{section:defn} in constructing $\Phi_d(c)$ by the subscript $c$. For example, the projecton $\pi$ is  referred to as $\pi_c$, the quasiregular map $g$ as $g_c$, the domain of $g_c$ as $(X_1)_c$ and so on.
\begin{lemma}\label{lemm:hybridimpliesaffine}
If $p_a,p_{a'}$ with $a,a' \in \mathcal{CBO}_d$ are hybrid equivalent, then they are affine conjugate.
\end{lemma}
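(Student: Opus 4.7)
My plan is to reconstruct the standard proof of the uniqueness clause of Douady--Hubbard's Straightening Theorem, specialised to the odd bicritical family: combine the hybrid equivalence $\psi$ with the Böttcher coordinates at infinity to produce a globally equivariant quasiconformal conjugacy, integrate the resulting $p_a$-invariant Beltrami form by the MRMT, and conclude via a standard rigidity argument.

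Because $a,a'\in\mathcal{CBO}_d$, both $K_{p_a}$ and $K_{p_{a'}}$ are connected, so the Böttcher coordinates $\varphi_a,\varphi_{a'}$ extend to global conformal isomorphisms of the basins of infinity with $\hat{\C}\setminus\overline{\D}$, conjugating $p_a,p_{a'}$ to $z\mapsto z^{2d+1}$. I would glue the hybrid equivalence $\psi$ (defined on a neighbourhood of $K_{p_a}$) to the Böttcher-based conjugacy $\varphi_{a'}^{-1}\circ\varphi_a$ (defined on a neighbourhood of $\infty$) via a quasiconformal interpolation on a thin equipotential annulus between them, carried out $p_a$-$p_{a'}$-equivariantly; this is the standard Douady--Hubbard grafting surgery, essentially the same tool used to construct $\Phi_d$ in Section~\ref{section:defn}. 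The result is a quasiconformal homeomorphism $\Psi:\hat{\C}\to\hat{\C}$ with $\Psi\circ p_a=p_{a'}\circ\Psi$ on all of $\hat{\C}$; the oddness of $p_a,p_{a'}$ further allows me to arrange $\Psi(-z)=-\Psi(z)$. The Beltrami coefficient $\mu=\Psi^{*}\mu_0$ is then $p_a$-invariant, invariant under $z\mapsto -z$, and vanishes on $K_{p_a}$ (by the hybrid property) and on a neighbourhood of $\infty$ (by Böttcher-conformality), with $\|\mu\|_{\infty}<1$.

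Next I would integrate $\mu$ by the Measurable Riemann Mapping Theorem to obtain a quasiconformal homeomorphism $\Theta:\hat{\C}\to\hat{\C}$ with $\Theta^{*}\mu_0=\mu$, normalised by $\Theta(0)=0$, $\Theta(\infty)=\infty$, and tangent to the identity at $\infty$. Then $\tilde p:=\Theta\circ p_a\circ\Theta^{-1}$ is a polynomial of degree $2d+1$ commuting with $z\mapsto -z$; after an affine rescaling of the critical points it takes the form $p_{\tilde a}$ for a unique $\tilde a\in\C^{*}$. Since $\Psi$ and $\Theta$ both integrate $\mu$, the composite $\Psi\circ\Theta^{-1}$ has vanishing Beltrami coefficient, is therefore conformal and Möbius, and (fixing $\infty$) is affine; this affine map conjugates $p_{\tilde a}$ to $p_{a'}$, giving $\tilde a=a'$ by the rigidity of the parametrisation $b\mapsto p_b$. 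To identify $\tilde a$ with $a$, I consider the holomorphic motion $\{t\mu\}_{t\in\overline{\D}}$, integrated to a holomorphic family $\Theta_t$ of hybrid equivalences from $p_a$ to polynomials $p_{\tilde a(t)}$, with $\tilde a(0)=a$ and $\tilde a(1)=a'$. The standard no-invariant-line-field rigidity of polynomial dynamics in the one-complex-parameter family $\mathcal{BO}_d$ forces the holomorphic map $t\mapsto\tilde a(t)$ into $\C^{*}$ to be constant, so $a=a'$ and $p_a=p_{a'}$.

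The principal obstacle is the first step, the construction of a globally $(p_a,p_{a'})$-equivariant quasiconformal extension $\Psi$: on the interpolation annulus one must match two prescribed quasisymmetric boundary parametrisations (coming from $\psi$ inside and from $\varphi_{a'}^{-1}\circ\varphi_a$ outside) while preserving the dynamics on both sides. This is precisely the classical Douady--Hubbard grafting that already underlies the construction of $\Phi_d$ itself, so the paper's own machinery supplies it. Once $\Psi$ is in hand, the dilatation estimates and the MRMT application are routine; the concluding rigidity invocation is standard for one-complex-parameter families of polynomials with connected Julia sets and can be established directly via the Mañé--Sad--Sullivan $\lambda$-lemma applied to repelling cycles.
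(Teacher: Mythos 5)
The paper disposes of this lemma in one line, by citing \cite[Chapter~I.6, Corollary~2]{Douady1985}, which is precisely the statement that two hybrid equivalent polynomials of the same degree with connected filled Julia sets are affinely conjugate. What you have written is an attempt to reprove that cited result, and while your steps 1--4 are sound --- the equivariant grafting of the hybrid equivalence with the B\"ottcher conjugacy, the invariance and support properties of $\mu$, the MRMT integration, and the identification $\tilde a=a'$ via the conformal map $\Psi\circ\Theta^{-1}$ --- the final step has a genuine gap. You have correctly reduced the lemma to the assertion that the holomorphic family $t\mapsto\tilde a(t)$ obtained by integrating $t\mu$ is constant, where $\mu$ is a $p_a$-invariant Beltrami form vanishing on $K_{p_a}$; but the justification you offer does not establish this. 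The Ma\~n\'e--Sad--Sullivan $\lambda$-lemma applied to repelling cycles yields a holomorphic motion and structural stability, not constancy of the parameter: inside any hyperbolic component of $\mathcal{CBO}_d$ all maps are quasiconformally conjugate and their repelling cycles move holomorphically, yet the parameter varies. Likewise ``no invariant line fields'' is false on the Fatou set and is an open conjecture on the Julia set; what you actually need is that an invariant Beltrami form supported on the basin of infinity of a polynomial with connected Julia set induces a trivial deformation --- and that is exactly the content of the Douady--Hubbard corollary you set out to reprove, so as written the argument is circular. Note that the paper's Lemma~\ref{lemm:quasi} does run your $t\mu$ argument, but only for $a\in\partial\mathcal{CBO}_d$, where constancy of $a(t)$ follows from the open mapping theorem; that escape route is unavailable for interior parameters, which your lemma must also cover.

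The standard way to close the gap avoids the interpolation annulus altogether. Since $K_{p_a}$ is connected, the B\"ottcher coordinates are conformal isomorphisms of the \emph{entire} basins of infinity, so $\varphi_{a'}^{-1}\circ\varphi_a$ is a conformal conjugacy on all of $\C\setminus K_{p_a}$. One shows that this map and the hybrid equivalence $\psi$ fit together along $J_{p_a}$ into a single global homeomorphism, invokes the Bers--Rickman gluing lemma (a homeomorphism that agrees with a quasiconformal map on a closed set and is quasiconformal off that set is globally quasiconformal) to see the glued map is quasiconformal, and then observes that its Beltrami coefficient vanishes almost everywhere: on $K_{p_a}$ by the hybrid property, and off $K_{p_a}$ by conformality. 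A $1$-quasiconformal homeomorphism of $\C$ is affine, which is the desired conclusion. Either supply this argument or, as the paper does, simply cite \cite[Chapter~I.6, Corollary~2]{Douady1985}.
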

\begin{proof}
This follows from \cite[Chapter~I.6, Corollary~2]{Douady1985}.
\end{proof}
\subsection{The interior case}

If $c \in \mathcal{M}_{d+1}^\circ$, then proof is based on the proof of \cite[Chapter~II.5, Proposition~12]{Douady1985}. 
\begin{defn}\label{defn:analyticfamily}
    Given 
    $f_\lambda : U_\lambda' \longrightarrow U_\lambda$, for $\lambda \in \Lambda$, let
    \begin{align*}
        \mathcal{U}' & = \{(\lambda, z)| z \in U_\lambda'\}\\
        \mathcal{U} & = \{(\lambda, z)| z \in U_\lambda\}
    \end{align*}
    and define $f: \mathcal{U}' \longrightarrow \mathcal{U}$ as $f(\lambda, z) = f_\lambda(z)$. If 
    \begin{enumerate}
    
        \item $\mathcal{U}',\mathcal{U}$ are homeomorphic over $\Lambda$ to $\Lambda \times \D$.
        \item projection of $\overline{\mathcal{U}'}$ in $\mathcal{U}$ to $\Lambda$ is proper
         \item $f$ is holomorphic and proper
       
    \end{enumerate}
    then $f_\lambda$ is called an analytic family.
    \end{defn}

Let us go back to the construction of $\Phi_d(c)$ from $f_c$. We first construct a quasiregular map $g_{c}: (X_1)_{c} \longrightarrow X_{c}$. This map is built from $f_c$ away from certain escaping sectors, and from the Riemann map $h_{c}$ on other sectors. Then we find an invariant complex structure $\sigma_c$ for $g_{c}$ and find integrating maps $\psi_c$. This gives us the polynomial-like family $G_{c}: U'_{c} \longrightarrow U_{c}$.
        \begin{prop}\label{prop:analyticfam}
     On a connected component $\Lambda$ of $\mathcal{M}^\circ_{d+1}$, $(c,z) \mapsto (c,G_{c}(z))$ is an analytic family of structurally stable polynomials.
     \end{prop}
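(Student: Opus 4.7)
The plan is to verify the three conditions of Definition~\ref{defn:analyticfamily} in turn and then deduce structural stability from J-stability of $f_c$ on the interior of $\mathcal{M}_{d+1}$. The key idea is to track every piece of the surgery of Section~\ref{section:defn} holomorphically in $c\in\Lambda$, and then apply the parametric measurable Riemann mapping theorem to promote the quasiregular family to an analytic polynomial-like family.

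First I would argue holomorphic dependence on $c$ at the level of the quasiregular model. The Böttcher chart $\varphi_c$ is defined on a neighborhood of $\infty$ that can be chosen independent of $c$ on compact subsets of $\mathcal{M}_{d+1}$, and is holomorphic in $c$. Hence the sets $\widetilde{S}(\ell/(d+1))_c$, $S_1(\ell/(d+1))_c$, $T_1(\ell/(d+1))_c$, $V_{0,c}$, and the Riemann surfaces $(X_1)_c, X_c$, being $\varphi_c^{-1}$-images of fixed regions in the $w$-plane, depend holomorphically on $c$ and assemble into locally trivial holomorphic fibre bundles over $\Lambda$. The Riemann map $\widetilde{h}_c: T_1(0)_c \longrightarrow V_{0,c}$ is pinned by sending the triple $(\beta_c, z_1, z_2)$ to $(\beta_c, \widetilde{z}_1, \widetilde{z}_2)$, with both domains bounded by quasicircles moving holomorphically in $c$; standard analytic dependence of marked Riemann maps on their data then gives that $\widetilde{h}_c$ is holomorphic in $c$. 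The quasiconformal interpolation of Lemma~\ref{lemma:quasiexists} is explicit (linear along vertical lines in logarithmic coordinates), so the resulting $g_c$ is holomorphic in $c$ on each piece of its domain.

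Next I would transfer this to the polynomial-like family. The invariant almost complex structure $\sigma_c$ equals the standard structure on the pieces where $g_c$ is analytic, is the pullback of the standard structure on the annular regions $\pi_c(Q_1(\ell/(d+1))_c^\pm \setminus T_1(\ell/(d+1))_c^\pm)$, and is defined on the rest by pullback under iterates of $g_c$. Since every orbit visits the quasiconformal annuli at most once, $\|\sigma_c\|_\infty$ is uniformly bounded by some $m<1$ independent of $c$, and for almost every $z$ the map $c\mapsto \sigma_c(z)$ is holomorphic. Applying item (2) of the measurable Riemann mapping theorem quoted earlier produces integrating maps $\psi_c$ such that $(c,z) \mapsto (c,\psi_c(z))$ is a homeomorphism and $c\mapsto \psi_c(z)$ is holomorphic for each $z$. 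After the normalizations $\psi_c(\pi_c(\beta_c^\pm)) = 0$ and $\psi_c(z)/z \to 1$ at infinity, the polynomial-like map $G_c = \psi_c \circ g_c \circ \psi_c^{-1}: U'_c \longrightarrow U_c$ has $(c,z) \mapsto (c, G_c(z))$ holomorphic on its domain.

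Finally I would finish by verifying the remaining bundle conditions and establishing structural stability. The domains $U_c, U'_c$ are Jordan domains varying continuously with $c$, so $\mathcal{U}'$ and $\mathcal{U}$ are homeomorphic over $\Lambda$ to $\Lambda\times\D$ via a fibrewise Riemann mapping; the compact containment $\overline{(X_1)_c}\subset X_c$ passes through $\psi_c$ to make the projection $\overline{\mathcal{U}'}\to\Lambda$ proper, and properness of $f$ follows since $G_c$ has fixed finite degree $2d+1$ on each fibre. For structural stability, on a component $\Lambda$ of $\mathcal{M}_{d+1}^\circ$ the family $f_c$ is J-stable by Mañé--Sad--Sullivan; the surgery alters dynamics only in a parameter-uniform neighborhood of the escaping set and the forward orbits of the rays $\mathcal{R}_{\ell/(d+1)}$, which remain disjoint from the post-critical set, so the J-stability descends to $G_c$. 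The main obstacle I anticipate is the uniform control of the dilatation of $\sigma_c$ together with the holomorphic dependence of $\widetilde{h}_c$ on $c$: both rely on showing that the relevant boundaries are quasicircles whose quasiconformality constants can be bounded independently of $c$ on compact subsets of $\Lambda$, which requires some care near the pinched points where the interpolation is assembled.
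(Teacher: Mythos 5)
Your proposal is correct and follows the same skeleton as the paper --- verify the three conditions of Definition~\ref{defn:analyticfam}, invoke the parametric measurable Riemann mapping theorem for the integrating maps $\psi_c$, and cite Ma\~n\'e--Sad--Sullivan on $\Lambda\subset\mathcal{M}_{d+1}^\circ$ --- but it differs in how the central analytic-dependence step is carried out. The paper makes the holomorphic motion the engine of the whole argument: from structural stability it takes $L:\Lambda\times\hat{\C}\to\hat{\C}$ with $L_{\widetilde c}\circ f_c\circ L_{\widetilde c}^{-1}=f_{\widetilde c}$ and transports the corrected map by $h_{\widetilde c}=L_{\widetilde c}\circ h_c\circ L_{\widetilde c}^{-1}$, so that holomorphic dependence of $g_c$, hence of $\sigma_c$, hence of $\psi_c$ and $G_c$, all fall out of the single motion $L$ (with Hartogs' theorem supplying joint holomorphy at the end). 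You instead argue each ingredient directly: the sectors and $V_{0,c}$ are $\varphi_c^{-1}$-images of fixed regions so they vary holomorphically with the B\"ottcher chart, the marked Riemann map $\widetilde h_c$ depends analytically on its holomorphically moving quasicircle data, and the interpolation of Lemma~\ref{lemma:quasiexists} is explicit. Your route is more self-contained in that it does not need the conjugacy $L_{\widetilde c}\circ h_c\circ L_{\widetilde c}^{-1}$ to coincide with the marked Riemann map for $\widetilde c$ (an identity the paper asserts without comment), but it shifts the burden onto the ``standard analytic dependence of marked Riemann maps,'' which is itself usually proved by extending the boundary motion via the $\lambda$-lemma --- i.e.\ by reintroducing a holomorphic motion. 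You correctly flag the uniform dilatation bound and the quasicircle constants as the delicate points; the same caveat applies to the holomorphy of $c\mapsto\sigma_c(z)$ at fixed $z$, since the annuli carrying the pulled-back structure themselves move with $c$, a point both you and the paper pass over lightly.
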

     \begin{proof}
We show that $G_{c}$ satisfies the three properties of Definition~\ref{defn:analyticfamily}. 
   \begin{enumerate}
       \item $U'_{c},  U_{c}$ are homeomorphic to $\D$ and $c' \mapsto {U}'_{c}, c \mapsto U_{c}$ are both continuous maps in the Hausdorff topology 
       \item Let $\Pi$ be this projection. Given any compact set $K$ in $\Lambda$, and a sequence $(c_n, z_n) \in  \Pi^{-1}(K)$, upto a subsequence, $c_n \longrightarrow c \in K$. We note that $z_n \in \overline{U'_{c_n}}$. $\overline{U'}_{c_n} \longrightarrow \overline{U'}_c$, and hence, there exists a sequence $\widetilde{z}_n \in \overline{U'}_c$ such that $|z_n - \widetilde{z}_n| \longrightarrow 0$. Since $\overline{U'}_c$ is compact, upto a subsequence, $\widetilde{z}_n \longrightarrow z \in \overline{U'}_c$. So $z_n \longrightarrow z$ upto the same subsequence. This shows that $\Pi^{-1}(K)$ is compact.
       \item By \cite{mane1982dynamics}, every parameter $c \in \Lambda$ is structurally stable.   More particularly, given $c \in \Lambda$, there exists a holomorphic motion $L: \Lambda \times \hat{\C} \longrightarrow \hat{\C}$ such that $L_c = id$, and for all $\widetilde{c} \in \Lambda$,  $L_{\widetilde{c}}$ is quasiconformal and satisfies  $L_{\widetilde{c}}\circ f_c\circ L_{\widetilde{c}}^{-1} = f_{\widetilde{c}}$. 
       \begin{figure}
\centering
    \includegraphics[scale=0.4]{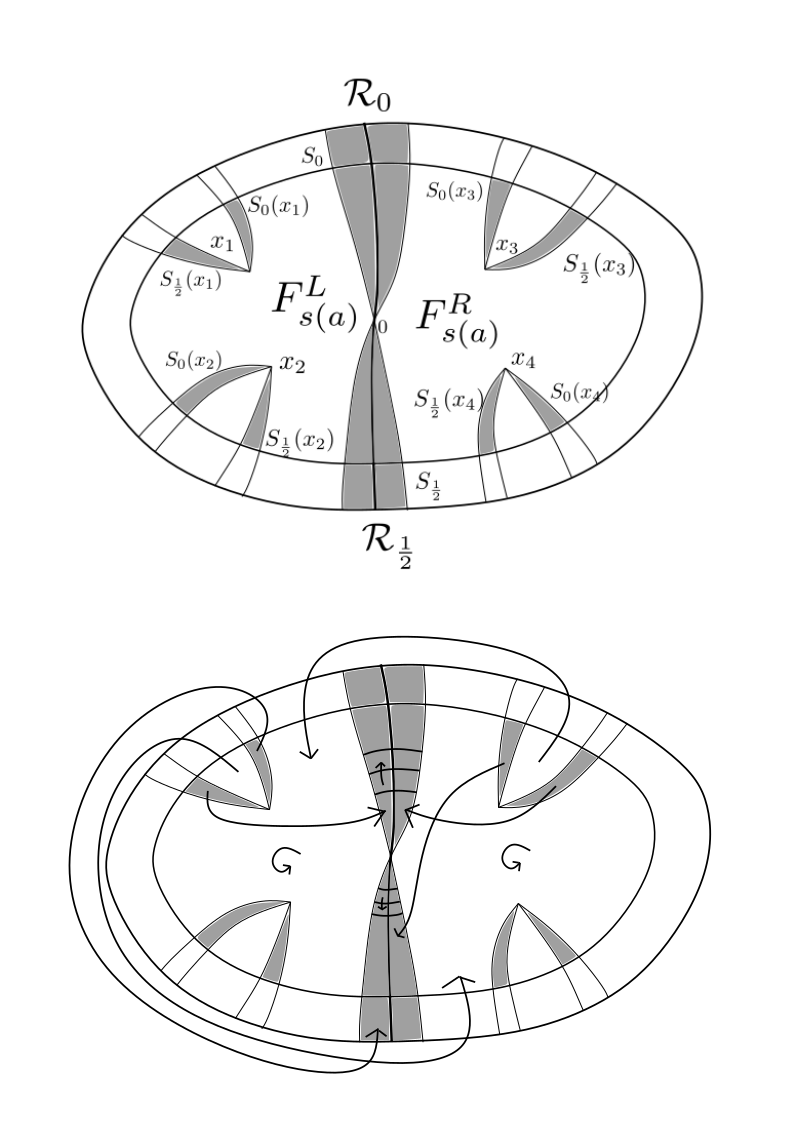}

\caption{Dynamics of  $P_{s(a)}$ for $a \in \mathcal{CBO}_d^{(+,-)}$}
    \label{fig:inversedefn}
\end{figure}
       But this also means that $h_{\widetilde{c}} =L_{\widetilde{c}}\circ h_c\circ L_{\widetilde{c}}^{-1}$ on $\varphi^{-1}_{\widetilde{c}} \circ  \exp (\Delta_p)$, and by definition of the quasiconformal extension,  $h_{\widetilde{c}} =L_{\widetilde{c}}\circ h_c\circ L_{\widetilde{c}}^{-1}$ on the sector $S_1\big(\frac{\ell}{d+1}\big)$ in the dynamical plane of $f_{\widetilde{c}}$.
     Therefore, $g_{c}$, and consequently, $\sigma_c$ depend analytically on $c$. By the measurable Riemann mapping theorem, the integrating maps $\psi_c$ depend holomorphically on $c$. 
     
     For a fixed $z \in U'_c$, when $\widetilde{c}$ is close to $c$, $G_{\widetilde{c}}(z)$ is well-defined, and $\widetilde{c} \mapsto G_{\widetilde{c}}(z) = \psi_{\widetilde{c}} \circ g_{\widetilde{c}}\circ \psi_{\widetilde{c}}^{-1}(z)$ is holomorphic in $\widetilde{c}$. 
     For a fixed $c$, $z \mapsto G_c(z)$ is holomorphic by definition. Thus  $G_c(z)$ is holomorphic in both $c$ and $z$; by Hartog's theorem, it is holomorphic as a function of $(c,z)$. Proof that $G_c(z)$ is proper is similar to Point~2 above.
   \end{enumerate}
     \end{proof}
In Proposition~\ref{prop:analyticfam}, we showed that $(c,z) \mapsto G_c(z)$ is an analytic family over every connected component $\Lambda$ of $\mathcal{M}^{\circ}_{d+1}$. Given $c \in \Lambda$, let us pick the hybrid equivalence $k_{{c}}$ conjugating $G_c(z)$ to a polynomial in such a way that it fixes $0$ and satisfies $k_{{c}}(z)/z \longrightarrow 1$. Then, by \cite[Chapter~II.5, Proposition~12]{Douady1985}, the polynomials $k_{{c}}\circ G_{{c}}\circ k_{{c}}^{-1}$ form a continuous family over $\Lambda$. As proved in Section~\ref{section:defn}, these are affine conjugate to  bicritical odd polynomials, and their critical points vary continuously with respect to  ${c}$. Hence there exists a continuous family of scaling maps $M_{{c}}$ that map these critical points to $\pm \sqrt{d}$.  But then
 \begin{align*}
     p_{\Phi_d(\widetilde{c})} = M_{\widetilde{c}}\circ k_{\widetilde{c}}\circ G_{\widetilde{c}}\circ k_{\widetilde{c}}^{-1}\circ M_{\widetilde{c}}^{-1},
 \end{align*} is clearly continuous in $\widetilde{c}$.
\subsection{The Boundary case}
The following lemma and its proof are similar to \cite[Chapter~II.8, Lemma~3]{10.1007/BFb0081395}.
\begin{lemma}\label{lemm:quasi}
If $p_a$ and $p_{a'}$ are quasiconformally equivalent via $\psi$, with $a \in \partial \mathcal{CBO}_d$, such that $\psi$ satisfies the conditions below:
\begin{align*}
    \psi(0) &= 0\\
    \psi(\sqrt{d}) & =\sqrt{d}\\
\lim_{z \longrightarrow \infty}\frac{\psi(z)}{z} &=1
\end{align*}
then $a=a'$.
\end{lemma}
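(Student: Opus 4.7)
\emph{Strategy.} The plan is to follow the standard Douady--Branner deformation argument \cite{10.1007/BFb0081395}: promote the single quasiconformal conjugacy $\psi$ to a holomorphic family of QC deformations of $p_a$, producing a holomorphic curve $t\mapsto a(t)$ in $\C^*$ joining $a$ to $a'$ and staying inside $\mathcal{CBO}_d$, and then use the hypothesis $a\in\partial\mathcal{CBO}_d$ together with the open mapping theorem to force the curve to be constant, which gives $a=a'$.

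\emph{Setting up the family.} Let $\mu$ denote the Beltrami coefficient of $\psi$, and set $k:=\|\mu\|_\infty<1$. The conjugation relation $\psi\circ p_a = p_{a'}\circ\psi$ combined with the linearity of the Beltrami pullback under holomorphic maps gives $p_a^*\mu = \mu$ a.e., and hence $p_a^*(t\mu)=t\mu$ for every $t$ with $|t|<1/k$. Applying the Measurable Riemann Mapping Theorem to $\mu_t:=t\mu$ with the normalization $\psi_t(0)=0$ and $\psi_t(z)/z\to 1$ as $z\to\infty$ produces a family of quasiconformal homeomorphisms $\psi_t:\C\to\C$ depending holomorphically on $t$, with $\psi_0=\mathrm{id}$ and $\psi_1=\psi$. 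Since $p_a, p_{a'}$ are odd and $\psi$ must permute their critical points (forcing $\psi(-\sqrt d)=-\sqrt d$), the reflection $\tilde\psi(z):=-\psi(-z)$ also conjugates $p_a$ to $p_{a'}$ and satisfies all three of the lemma's normalizations; invoking uniqueness of the conjugacy under these normalizations (or replacing $\psi$ by $\tilde\psi$ if necessary), we may assume $\psi$ is odd, i.e.\ $\tau^*\mu=\mu$ where $\tau(z)=-z$, and hence each $\psi_t$ may also be chosen odd. Define $q_t := \psi_t\circ p_a\circ\psi_t^{-1}$; the $p_a$-invariance of $\mu_t$ forces $q_t$ to be holomorphic, hence an odd polynomial of degree $2d+1$ with critical points $\pm\sigma(t)$, where $\sigma(t):=\psi_t(\sqrt d)$ is holomorphic in $t$ and $\sigma(0)=\sigma(1)=\sqrt d$. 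Post-composing $\psi_t$ with the odd scaling $\Lambda_t(z):=(\sqrt d/\sigma(t))z$ renormalizes $q_t$ to a unique element $p_{a(t)}$ of the family $\mathcal{BO}_d$ from Section~\ref{section:prelim}, yielding a holomorphic map $t\mapsto a(t)$ on $|t|<1/k$ with $a(0)=a$ and $a(1)=a'$.

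\emph{Boundary rigidity.} The composition $\Lambda_t\circ\psi_t$ is a global quasiconformal conjugacy $p_a\to p_{a(t)}$, so $K(p_{a(t)})$ is homeomorphic to $K(p_a)$; since $a\in\mathcal{CBO}_d$, this set is connected, so $a(t)\in\mathcal{CBO}_d$ for every admissible $t$. If $t\mapsto a(t)$ were non-constant, the open mapping theorem would make its image an open neighborhood of $a=a(0)$ contained in $\mathcal{CBO}_d$, contradicting the hypothesis $a\in\partial\mathcal{CBO}_d$. Therefore $a(t)$ is constant, and specializing at $t=1$ yields $a'=a$. The principal technical obstacle lies in the odd-symmetry step: rigorously guaranteeing $\tau^*\mu=\mu$ so that the family $q_t$ stays inside the odd bicritical family rather than escaping into the larger moduli of bicritical degree-$(2d+1)$ polynomials; this is the analogue of the symmetry subtlety in Douady and Branner's original surgery and is the main point where care with the uniqueness clause of the MRMT (or an explicit passage from $\psi$ to $\tilde\psi$) is required.
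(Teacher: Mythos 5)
Your proposal follows the same Douady--Branner deformation strategy as the paper -- promote $\psi$ to a holomorphic disc of Beltrami deformations, read off a holomorphic curve $t\mapsto a(t)$ in $\mathcal{CBO}_d$, and kill it with the open mapping theorem at the boundary point $a$ -- but with one genuine structural difference. You deform the \emph{full} Beltrami coefficient $\mu$ of $\psi$, so that $\psi_1=\psi$ by uniqueness in the Measurable Riemann Mapping Theorem and $a(1)=a'$ falls out immediately; constancy of $a(t)$ then finishes the proof in one stroke. The paper instead deforms $\mu_0=\mu\cdot\mathbb{1}_{K_{p_a}}$ (extended by zero off the filled Julia set), so $\psi_1\neq\psi$ in general; it concludes that $a(t)$ is constant and then separately observes that $\psi\circ\psi_1^{-1}\circ h_1^{-1}$ is a \emph{hybrid} equivalence between $p_a$ and $p_{a'}$, invoking Lemma~\ref{lemm:hybridimpliesaffine}. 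Your version is shorter and avoids the final appeal to that lemma (and the paper's preliminary case split on whether $K_{p_a}$ has measure zero); the paper's version is the classical one because it simultaneously upgrades the quasiconformal conjugacy to a hybrid one, which is the form of the statement reused elsewhere. Both are valid routes to $a=a'$.

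The one real gap is exactly the point you flag: to place $q_t=\psi_t\circ p_a\circ\psi_t^{-1}$ in the odd family $\mathcal{BO}_d$ (and to write its critical points as $\pm\sigma(t)$), you need the involution $\kappa_t=\psi_t\circ(-\mathrm{id})\circ\psi_t^{-1}$ to be affine, which requires $\tau^*(t\mu)=t\mu$ where $\tau(z)=-z$. Neither of your suggested remedies delivers this: normalized quasiconformal conjugacies between two fixed polynomials are not unique (so you cannot conclude $\psi=\tau\circ\psi\circ\tau$), and simply replacing $\psi$ by $\tilde\psi=\tau\circ\psi\circ\tau$ produces another admissible conjugacy whose coefficient is $\tau^*\mu$, not a $\tau$-symmetric one. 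Symmetrizing to $\tfrac12(\mu+\tau^*\mu)$ preserves $p_a$-invariance and $\tau$-invariance but destroys the identity $\psi_1=\psi$ on which your endgame relies, so this repair would push you back toward the paper's two-step conclusion via a hybrid equivalence. Note that the paper's own proof asserts affineness of $\kappa_t$ without further comment, so you have correctly isolated the delicate step; but as written your proposal does not close it, and closing it is what forces the choice between your streamlined endgame and the paper's.
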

\begin{proof}
We first note that any $\psi$ as above also satisfies $\psi(-\sqrt{d}) = -\sqrt{d}$. If $K_{p_a}$ has measure $0$, then $\psi$ is a hybrid equivalence and the result follows. 

Otherwise, our strategy is to build a hybrid equivalence between the two polynomials, similar to \cite[Chapter~I.6, Corollary~2]{Douady1985} and use Lemma~\ref{lemm:hybridimpliesaffine}. Consider the Beltrami form $\mu = \frac{ \overline{\partial} \psi}{\partial \psi}$ and let $\mu_0$ be the form that agrees with $\mu$ on $K_{p_a}$ and equals $0$ on $\C \setminus K_{p_a}$. Set $k = ||\mu_0||_\infty$. 

We note that $k<1$. By the measurable Riemann mapping theorem, for every $t \in \D_{\frac{1}{k}}$, there exists a unique quasiconformal homeomorphism $\psi_t : \C \longrightarrow \C$ such that 
\begin{align*}
    \frac{\overline{\partial }\psi_t}{\partial \psi_t} & = t\mu_0\\
    \psi_t(0) &= 0\\
    \lim_{z \rightarrow \infty}\frac{\psi_t(z)}{z}& = 1
\end{align*}
 We note that $\kappa_t (z)= \psi_t(-\psi_t^{-1}(z))$ is a family of  affine maps that satisfy 
\begin{align*}
\kappa_t(0) & = 0\\
    \lim_{z \mapsto \infty} \frac{\kappa_t(z)}{z} &= -1
\end{align*}
Therefore, $ \kappa_t(z) = -z $.

$\psi_t \circ p_a \circ \psi_t^{-1}$ is a polynomial with exactly two critical points that commutes with $\kappa_t$. It has the form $\widetilde{a}(t){\displaystyle \int_{0}^{z}} \Big(1-\frac{w}{x(t)}\Big)^d\Big(1+\frac{w}{x(t)}\Big)^ddw$. The functions 
$\widetilde{a}, x:\D_{\frac{1}{k}} \longrightarrow \C$ are holomorphic, with $x(0) = \sqrt{d}$, $\widetilde{a}(0)=a$. These polynomials are odd, therefore by conjugating them by $h_t(z) =  \frac{z\sqrt{d}}{x(t)}$, we obtain polynomials of the form $a(t){\displaystyle \int_0^z}\Big(1-\frac{w^2}{d}\Big)^ddw$, where $a: \mathbb{D}_{\frac{1}{k}} \longrightarrow \C$ is holomorphic, with $a(t) \in \mathcal{CBO}_d$, and $a(0) = a \in \partial \mathcal{CBO}_d$. But this implies that $a(t)$ is a constant function, and $\psi \circ \psi_1^{-1} \circ h_1^{-1}$ is a hybrid equivalence between $p_a$ and $p_{a'}$. 
Lemma~\ref{lemm:quasi} implies  $a=a'$.\\
\end{proof}

\noindent To show continuity of $\Phi_d$ at $c \in \partial \mathcal{M}_{d+1}$, it suffices to show that its graph is closed, that is, if $c_n \in \mathcal{M}_{d+1}$ converge to $c$ and $a_n=\Phi_d(c_n) \longrightarrow \widetilde{a}$, then $\widetilde{a} = \Phi_d(c)$.

Let 
\begin{align*}
a&=\Phi_d(c) &f_n & = f_{c_n}\\
g&=g_c & g_n&=g_{c_n}\\
\sigma & = \sigma_c &\sigma_n&=\sigma_{c_n}\\
\psi & = \psi_c & \psi_n & = \psi_{c_n}\\
G&=G_c = \psi \circ g \circ \psi^{-1} &G_n& = G_{c_n} = \psi_n \circ g_n \circ \psi_n^{-1}\\
 \varphi &=\varphi_c   &\varphi_{n} & =\varphi_{c_n}\\
 V_n& = (V_0)_{c_n} & V & = (V_0)_c\\
 Q_n(\ell) & = \Big(Q_1\Big(\frac{\ell}{d+1}\Big)\Big)_{c_n} &  Q(\ell) & = \Big(Q_1\Big(\frac{\ell}{d+1}\Big)\Big) _c\\
 T_n(\ell) & = \Big(T_1\Big(\frac{\ell}{d+1}\Big)\Big)_{c_n} &  T(\ell) & = \Big(T_1\Big(\frac{\ell}{d+1}\Big)\Big) _c
\end{align*}
    \begin{prop}
   The sequence of quasiregular maps $g_n$ converge to $g$.
    \end{prop}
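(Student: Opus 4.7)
The plan is to exploit continuous dependence on $c$ at every step of the construction in Section~\ref{section:defn}. First, observe that the $\beta$-fixed point $\beta_c$ of $f_c$ depends holomorphically on $c$, so for $n$ large the Riemann surfaces $\widetilde{X}_{c_n}$ can be identified canonically with $\widetilde{X}_c$ via the branched cover $w\mapsto w^2+\beta_c$ (after a small holomorphic adjustment to match $\beta$-points), and the projections $\pi_n$ converge to $\pi$ on compact subsets of $W_0^\pm\setminus\{\beta^\pm\}$. Under this identification, I would first show that the Böttcher charts $\varphi_n\to\varphi$ uniformly on compact subsets of $\C\setminus K_{f_c}$ (a standard continuity result), and consequently that the domains $W_i$, the sectors $\widetilde S(\ell/(d+1))$, $S_1(\ell/(d+1))$, $Q_1(\ell/(d+1))$, $T_1(\ell/(d+1))$, and the set $V_0$ all converge in the Carathéodory sense to their $c$-analogues. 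This also gives convergence $f_n\to f_c$ uniformly on compact subsets of $W_1^\pm$, so the first piece of $g_n$ converges to the first piece of $g$ on compacta of the ``outside the sectors'' region.

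Next, for the behavior on the sectors I would use two classical continuity results. The Carathéodory kernel theorem, applied to the pairs $(V_n,\beta)\to(V,\beta)$ and $(T_n(0),\beta)\to (T(0),\beta)$, implies that the normalized Riemann maps $\widetilde k_n,\widetilde m_n$ converge uniformly on compact subsets to $\widetilde k,\widetilde m$. Hence $\widetilde h_n=\widetilde k_n\circ\widetilde m_n$ converges to $\widetilde h$ uniformly on compact subsets of $T(0)$. Because $\partial V,\partial T(0)$ are quasicircles and the convergence holds up to the boundary in suitable quasisymmetric sense, the explicit quasiconformal interpolation from Lemma~\ref{lemma:quasiexists}, carried out in log-Böttcher coordinates by linear interpolation between $\widehat\mu_{d+1}$ and $H_n=\log\widehat h_n(e^z)$ along vertical lines in the strip $G'$, depends continuously on the boundary data. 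Combining these estimates, the quasiconformal pieces $h_n$ of $g_n$ on each sector $\pi_n(S_1^\pm(\ell/(d+1)))$ converge to the corresponding piece of $g$ uniformly on compact subsets.

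Finally, piecing the two cases together and using the natural identification of $(X_1)_{c_n}$ with subsets of $(X_1)_c$ (coming from convergence of the sector boundaries and of $\pi_n$ to $\pi$), one concludes that $g_n\to g$ uniformly on every compact subset of $(X_1)_c$ contained eventually in $(X_1)_{c_n}$. The main technical obstacle, as I see it, is not any one of these continuity statements in isolation but rather organizing them consistently on the moving Riemann surfaces $(X_1)_{c_n}$: one must verify that the Carathéodory convergence of all the constituent domains together with convergence of $\beta$-points gives a legitimate notion of uniform convergence of $g_n$ to $g$ on compacta. Once this organizational issue is handled via the branched-cover identification and the explicit interpolation formula for $H$, the proposition follows, and it can then be combined with Lemma~\ref{lemm:quasi} to conclude continuity of $\Phi_d$ at boundary parameters.
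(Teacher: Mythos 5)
Your argument is correct and follows essentially the same route as the paper: convergence of $f_n$ to $f_c$ away from the sectors, Carath\'{e}odory kernel convergence plus quasicircle boundary extension to get $\widetilde{h}_n \to \widetilde{h}$, and the observation that the quasiconformal interpolation of Lemma~\ref{lemma:quasiexists} depends continuously on (in fact, is carried out identically for) the boundary data. The only cosmetic difference is that the paper avoids applying the kernel theorem on the domain side by factoring $\widetilde{h}_n$ through the B\"{o}ttcher chart, where $\varphi_n(T_n(\ell))$ is literally the fixed domain $\exp(\Delta_q)$ with fixed marked triple, so only the targets $V_n$ and the correcting M\"{o}bius maps $M_n$ need to be controlled.
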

    \begin{proof}
    On both the $+$ and $-$ copies of $(W_0)_{c_n}$, $g_n$ coincides with $f_n$ away from the sectors $Q_1(\ell)$, for $\ell=1,2,...,d$. On each of these sectors, $g_n$ has as its components a conformal map $\widetilde{h}_n: T_n(\ell) \longrightarrow V_n$, chosen uniquely so that the triple $((z_1)_n , (z_2)_n, \omega^\ell\beta_n)$ is mapped to the triple $((\widetilde{z}_1)_n, (\widetilde{z}_2)_n), \beta_n)$, and a quasiconformal extension to $Q_n(\ell) \setminus \overline{T_n(\ell)}$. Similarly, on $T(\ell)$, $g$ agrees with an analytic map $\widetilde{h}: T(\ell) \longrightarrow V$ chosen so that the triple $(z_1,z_2,\omega^\ell\beta)$ is sent to $(\widetilde{z}_1, \widetilde{z}_2, \beta)$. Fix an  $\ell \in \{1,2,...,d\}$. We will first show that the $\widetilde{h}_n$  converge to $\widetilde{h}$.

Let $\rho_n$ be the Riemann map that sends $\D$ to $V_n$, with $\rho_n(0) = 0$ and $\rho_n'(0) >0$. Observe that $V_n$ converges to $V$ with respect to the point $0$ in the sense of kernel convergence (see \cite[Section~1.4]{pommerenke}). By Carath\'{e}odory's kernel convergence theorem (\cite[Theorem~1.8]{pommerenke}), $\rho_n \longrightarrow \rho$ uniformly in $\D$, where $\rho :\D \longrightarrow V$ is a conformal map that sends $0$ to $0$ and satisfies $\rho'(0)>0$. Since the boundaries of $V_n, V$ are quasicircles, the $\rho_n$ extend to $\partial \D$ and these boundary maps converge uniformly to the boundary extension of $\rho$. Thus, the triples $(s_n, t_n, w_n)$ in $\mathbb{S}^1$ that map under $\rho_n$ to $((\widetilde{z}_1)_n, (\widetilde{z}_2)_n), \beta_n)$ converge to the triple $(s,t,w)$ in $\mathbb{S}^1$ that maps under $\rho$ to $(\widetilde{z}_1, \widetilde{z}_2, \beta)$. 

Let $M_n: \D \rightarrow\D $ be a sequence of automorphisms that send $(1,i,-1)$ to $(s_n,t_n, w_n)$, and let $M$ be the automorphism of $\D$ that sends $(1,i,-1)$ to $(s,t,w)$. Then $M_n \longrightarrow M$ on $\overline{\D}$. 

Lastly, for a given $\ell$, note that $\varphi_n(T_n(\ell))$ is the same domain $D = \exp(\Delta_q) = \varphi(T(\ell))$  for each $n$, and furthermore, $(\varphi_n((z_1)_n), \varphi_n((z_2)_n), \varphi_n(\omega^\ell\beta_n)) = (\varphi(z_1), \varphi(z_2), \varphi(\omega^\ell\beta))$ (we note that $\omega^\ell\beta_n$ and $\omega^\ell \beta$ are tips, ie. a unique dynamical ray lands at each of these points, so evaluating the B\"{o}ttcher chart at these points makes sense). Let $e: D \longrightarrow \D$ be the Riemann map that takes the triple $(\varphi(z_1), \varphi(z_2), \varphi(\omega^\ell\beta))$ in $\partial D$ to $(1,i,-1)$. Then 
\begin{align*}
    \widetilde{h}_n & = \rho_n \circ M_n\circ e \circ \varphi_n\\
    \widetilde{h} & = \rho \circ M \circ e \circ \varphi
\end{align*}
It is clear by our discussion that $\widetilde{h}_n \longrightarrow \widetilde{h}$. 

Therefore, on the sectors $T_n(\ell)$, the sequence $g_n$ converges to $g$. But note that the quasiconformal extension to $Q_n(\ell)$ is done in the same way for each $n$. Therefore, $g_n \longrightarrow g$. 

By definition of $\sigma_n$ and $\sigma$, we must have $\sigma_n \longrightarrow \sigma$, and consequently, by the Measurable Riemann Mapping Theorem, $\psi_n\longrightarrow \psi$.

    \end{proof}
\noindent This discussion tells us that 
\begin{align*}
       G_n&= \psi_n \circ g_n \circ \psi_n^{-1} \longrightarrow \psi \circ g  \circ \psi^{-1} =G
    \end{align*}
    Now consider the hybrid equivalences $k_n$ that conjugate $G_n$ to $p_{a_n}$. These have bounded dilitation ratio and map $0$ to $0$, and hence form an equicontinuous family. Upto a subsequence, $k_n$ converge to a quasiconformal map  $\widetilde{k}$. Thus, $k_n \circ G_n \circ k_n^{-1} \longrightarrow \widetilde{k} \circ G \circ \widetilde{k}^{-1}$. We will call the latter map $\widetilde{G}$.
    
    Using \cite[Chapter~II.7, Lemma, p.313]{Douady1985}, $\widetilde{G}$ is quasiconformally  equivalent to $p_{\widetilde{a}}$  (not necessarily hybrid equivalent), but it is also quasiconformally equivalent to $k \circ G \circ k^{-1}$, which in turn is hybrid equivalent to $p_a$. 

This shows that $p_{\widetilde{a}}$ is quasiconformally equivalent to $p_a$. We can choose the equivalence so that the conditions of Lemma~\ref{lemm:quasi} are satisfied. But in order to use this lemma, we also need to show that $a \in \partial \mathcal{CBO}_d$.
    
    Consider a sequence $c_n^*$ of Misiurewicz parameters tending to $c$, and let $a_n^* = \Phi_d(c_n^*)$. Then $a_n^*$ is Misiurewicz, and there exists a subsequence $a_n^* \longrightarrow a^* \in \partial \mathcal{CBO}_d$. By the paragraphs above, $a^* = a$, and hence, $a \in \partial \mathcal{CBO}_d$. Now we apply Lemma~\ref{lemm:quasi} again to get $a = a'$.

\section{Injectivity of $\Phi_d$}\label{section:injectivity}

In this section will construct an inverse $\Psi_d:\mathcal{CBO}_d^{(+,-)} \longrightarrow \mathcal{M}_{d+1}$ of $\Phi_d$.
\subsection{Dynamics of maps in $\mathcal{CBO}_d^{(+,-)}$}\label{section:imgdynamics}
Given $a \in \mathcal{CBO}_d^{(+,-)}$, let $P_{s(a)}$ be the monic representative of $p_a$ as defined in Section~$\ref{section:prelim}$. As in the construction of $\Phi_d$, for $\theta=0,\frac{1}{2}$, let $S_\theta$ be invariant sectors at $0$ with same slope. That is,
\begin{align*}
    S_0 & =\{\varphi_{s(a)}^{-1}(e^{s+2\pi i t}): s \in (0,\eta), |t| < qs\} \\
    S_{\frac{1}{2}} & = \{\varphi_{s(a)}^{-1}(-e^{s+2\pi i t}): s \in (0,\eta), |t| < qs\} 
\end{align*}
Note that $S_{\frac{1}{2}} = -S_0$. 

We choose $q$ to be small enough so that $S_0 \cap S_{\frac{1}{2}} = \{0\}$, and the inverse image of each $S_\theta$ under $P_{s(a)}$ consists of exactly $2d+1$ components.
The point $0$ has pre-images $\{0=x_0, x_1,x_2,...,x_{2d}\}$ under $P_{s(a)}$, of which $d$ - say $x_1,x_2,...,x_d$, are in $F^L_{s(a)}$, and $d$ are in $F^R_{s(a)}$.  Let $S_\theta(x_\ell)$ be the inverse image of $S_\theta$ based at $x_\ell$ for $\ell \neq 0$.

Let $W$ be the region bounded by an equipotential $\{z| G_s(z)=\eta\}$ and define $W_i = P_{s(a)}^{-\circ i}(W)$. For a given $\ell \in \{1,2,...,2d\}$, let $S$ be the connected component of $W_1 \setminus (S_0(x_\ell) \cup S_{\frac{1}{2}}(x_\ell))$ that does not contain $0$. Then $P_{s(a)}$ maps $S$ to $F^L_{s(a)}$ if $S\subset F^R_{s(a)}$, and to $F^R_{s(a)}$ if $S \subset F^L_{s(a)}$. 

We have illustrated this in Figure~\ref{fig:inversedefn}.

\subsection{Definition of $\Psi_d$}\label{section:inversedefn}
With $a$ as above,
\begin{figure}
    \centering
    \includegraphics[scale=0.3]{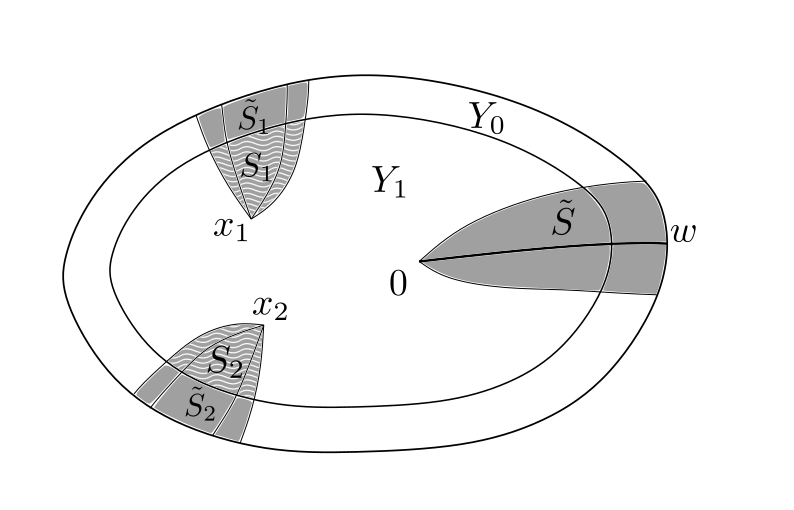}
    \caption{Cut and paste surgery on $P_{s(a)}$}
    \label{fig:odd_to_uni_quasiregular}
\end{figure}
construct the Riemann surface $Y$ as follows: let $Y_0 = W \cap F^L_{s(a)}$, and identify the boundaries $Y_0\cap \mathcal{R}_0(s(a))$ and $Y_0 \cap \mathcal{R}_0(s(a))$ by identifying points on either ray with same speed of escape. Additionally, if necessary, smoothe the boundary of $Y_0$ at the point $w$ as shown in Figure~\ref{fig:odd_to_uni_quasiregular}. $S_0 \cap F^L_{s(a)}$ and $S_{\frac{1}{2}} \cap F^L_{s(a)}$ with this boundary identification become a single sector which we shall call $\widetilde{S}$. We let $Y_1 = P_{s(a)}^{-1}(Y_0)$ with this boundary identification. Clearly, $\overline{Y_1}\subset Y_0$.

Given $\ell \in \{1,2,...,d\}$, let $S$ be the connected component of $Y_1 \setminus (S_0(x_\ell) \cup S_{\frac{1}{2}}(x_\ell))$ that does not contain $0$, and let $S'$ be the component that does. Let $S_\ell = S \cup S_0(x_\ell) \cup S_{\frac{1}{2}}(x_\ell)$ (see Figure~\ref{fig:odd_to_uni_quasiregular}).  Pick a quasiconformal homeomorphism $e_\ell: S_\ell  \mapsto \widetilde{S}$ that extends to a homeomorphism from $\partial S_\ell$ to $\partial \widetilde{S}$, and  coincides with $P_{s(a)}$ on  $\partial S_\ell \cap \partial S'$. For example, this can be constructed in a manner similar to  $g_c \Big| _{\pi_c\big(S_1^\pm\big(\frac{\ell}{d+1}\big)\big)}$ in Section~\ref{section:defn}.
Define
\begin{align*}
    F: Y_1 &\longrightarrow Y_0\\
    F(z) & = \begin{cases}
    P_{s(a)}(z)  & z \in Y_1 \setminus \bigcup_{\ell=1}^d  S_\ell \\
    e_\ell(z) & z \in S_\ell \text{ for some }\ell \in \{1,2,...,d\}
    \end{cases}
\end{align*}
$F$ is clearly a quasiregular map of degree $d+1$ with a single critical point. We define an $F-$ invariant complex structure $\sigma$ on $Y_0$ as\\
\begin{itemize}
    \item $E_z=\mathbb{S}^1$ if $z \in Y_0 \setminus Y_1$ or the $F-$ orbit of $z$ does not intersect $S_\ell$ for any $\theta,\ell$ \\
    \item $E_z = (DF^{n})^{-1}(\mathbb{S}^1)$ if $F^{n}(z)$ is the first point in the orbit of $z$ that is in $S_\ell$\\
\end{itemize}
\begin{figure}
    \centering
    \includegraphics[scale=0.4]{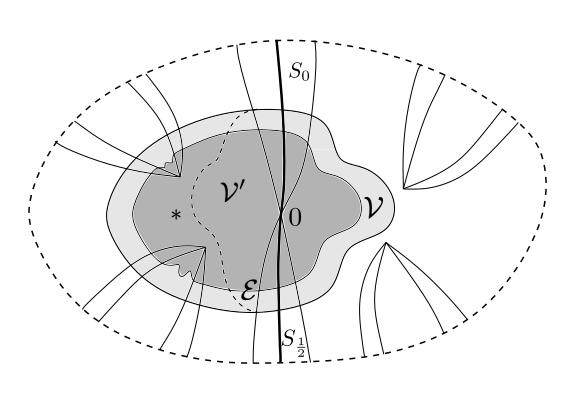}
    \caption{Alternative construction of $\Psi_d(a)$ by  renormalization; the 
    `$*$' marks the critical point $-\sqrt{d}s(a)$ of $P_{s(a)}$}
    \label{fig:odd_to_uni_renorm}
\end{figure}
Every $F-$ orbit visits $S_\theta(x_i)$ at most once. So, $\sigma$ has bounded dilitation. Note that $F^*\sigma = \sigma$, and thus, $F$ is quasiconformally equivalent to a polynomial-like map $y: V_1 \longrightarrow V$ with degree $d+1$ and a single critical point. The map $y$ is hybrid equivalent to a polynomial of the form $f_c(z) = z^{d+1}+c$. Note that $y$ only determines the affine  equivalence class of $f_c$, and thus $c$ is not unique if $d>1$; however, we impose the condition that the identified rays $\mathcal{R}_0(s(a))$ and $\mathcal{R}_{\frac{1}{2}}(s(a))$ are eventually mapped to the same access as the dynamical ray at angle $0$ to $f_c$ (with respect to the B\"{o}ttcher chart where  $\frac{\varphi_c(z)}{z} \rightarrow 1$ as $z \rightarrow \infty$). This determines $c$ uniquely. It is clear that $c \in \mathcal{M}_{d+1}$; we therefore define $\Psi_d(a) = c$. 
\begin{rem}
We may also construct $\Psi_d(a)$ by choosing a renormalization of $P_{s(a)}$.

Choose a neighborhood $\mathcal{E}$ of $0$, in which $P_{s(a)}$ is conjugate to $z \mapsto rz$ for some $r \in \C$ with $|r|>1$, small enough so that $\overline{\mathcal{E}}$ does not contain any critical points, and satisfying 
\begin{align*}
    \mathcal{E} \cap S_0 &= S_0 \cap W_i\\
    \mathcal{E} \cap S_{\frac{1}{2}} &= S_\frac{1}{2} \cap W_i
\end{align*}

Let $\mathcal{V}$ be an open set defined the union of $W_i \cap F^L_{s(a)}$ and $\mathcal{E}$. Then, there exists a connected component $\mathcal{V}'$ of $P_{s(a)}^{-1}(\mathcal{V})$ such that $\overline{\mathcal{V}'} \subset \mathcal{V}$, and $P_{s(a)}\big |_{\mathcal{V}'} : \mathcal{V}' \longrightarrow \mathcal{V}$ is polynomial-like of degree $d+1$ (see Figure~\ref{fig:odd_to_uni_renorm}). This polynomial-like map has a unique critical point at $-\sqrt{d}$, and by the straightening theorem, it is hybrid equivalent to a unicritical degree $d+1$ polynomial.

We can show for an appropriate choice of domains, the map  $F$ defined above in the first definition of $\Psi_d(a)$ and $P_{s(a)} \big |_{\mathcal{V}'}$ are hybrid equivalent.
\end{rem}

We may use the same methods as in Section~\ref{section:continuity} to show that $\Psi_d$ is continuous.
\subsection{$\Psi_d$ is the inverse of $\Phi_d$}
Given $c \in \mathcal{M}_{d+1}$, let ${c'} = \Psi_d \circ \Phi_d(c)$. We will follow the construction to show that $f_{c'}$ and $f_c$ are hybrid equivalent, and thus, $c'=c$. 

Let $a=\Phi_d(c)$. The construction $a \mapsto \Psi_d(a)$ involves picking the sectors $S_0$ and $S_\frac{1}{2}$ in the dynamical plane of $P_{s(a)}$, constructing a Riemann surface $ Y$,  a quasiregular map $F_{s(a)}$, and lastly, a polynomial like map $y_{s(a)}$. 

On the other hand, the construction $c \mapsto \Phi_d(c)$ goes through the steps $f_c \mapsto g_c \mapsto G_c \mapsto P_{s(a)}$. We will only be working with the `$-$' copies of  $S(\frac{\ell}{d+1}), K_{f_c}$, etc., and so we shall drop the `$-$' superscript. The first step in the construction of $\Phi_d(c)$ uses the quotient map $\pi_c$, and we have
\begin{align*}
    g_c (\pi_c(z))&= \pi_c(f_c(z)) \text{ away from sectors }\pi_c\Big(S\Big(\frac{\ell}{d+1}\Big)\Big)\\
    G_c& = \psi_c \circ g_c \circ \psi_c^{-1}\\
    P_{s(a)} & = k_c \circ G_c \circ k_c^{-1}
\end{align*}
where $\psi_c$ is quasiconformal and $k_c$ is a hybrid equivalence.
   \begin{figure}
    \centering
    \includegraphics[scale=0.4]{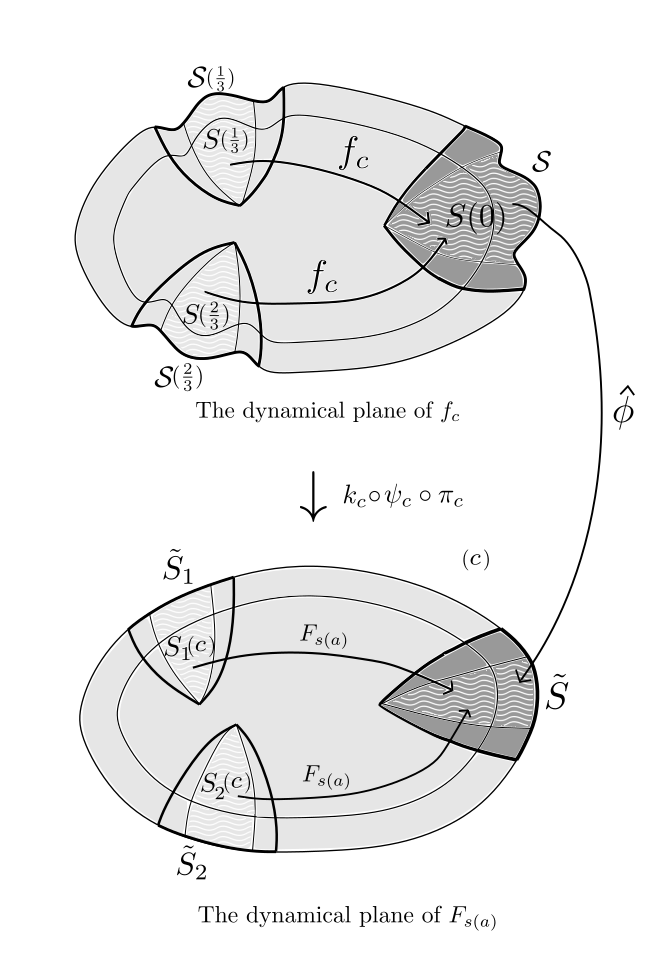}
    \caption{Building a conjugacy between $f_c$ and $F_{s(a)}$. The wavily shaded region in the top figure is $S(0)$; it is contained in $\mathcal{S}$ and its two copies map under $k_c \circ \psi_c \circ \pi_c$ to the sectors $S_0$ and $S_1$ respectively, which we cut to make $\tilde{S}$.  We define $\hat{\phi}$ on the darkly shaded region on the top to the darkly shaded region at the bottom.}
    \label{fig:inverse}
\end{figure}
\\In the dynamical plane of $P_{s(a)}$, for $\ell \in \{1,2,...,d\}$, define
\begin{align*}
    \widetilde{S}_0(x_\ell) & =\Big\{\varphi_{s(a)}^{-1}(e^{r+2\pi i t}): s \in (0,\eta), \Big|t-\frac{\ell}{2d}\Big| < qs\Big\} \\
    \widetilde{S}_{\frac{1}{2}}(x_\ell) & =\Big\{\varphi_{s(a)}^{-1}(-e^{s+2\pi i t }): s \in (0,\eta), \Big|t-\frac{\ell}{2d}\Big| < qs\Big\} 
\end{align*}
and let $\widetilde{S}_\ell$ be the union of $\widetilde{S}_0(x_\ell), \tilde{S}_{\frac{1}{2}}(x_\ell)$ and the connected component of $Y_0 \setminus $
$\widetilde{S}_0(x_\ell) \cup \tilde{S}_{\frac{1}{2}}(x_\ell)$ that contains $S_\ell$, as defined in Section~\ref{section:inversedefn}. See Figure~\ref{fig:odd_to_uni_quasiregular} for an illustration of $\widetilde{S}_\ell$. Let $\widetilde{\phi} = k_c \circ \psi_c \circ \pi_c$. 

In the dynamical plane of $f_c$, let $S_1\big(\frac{\ell}{d+1}\big)$ , $\ell = 0,1,...,d$, be as defined in Equations~\ref{eqn:s1defn} to \ref{eqn:y1defn} (the equipotential $\eta$ and the slope factor $q$ may be different from the ones used for $P_{s(a)}$). There are two copies of $S(0) = f_c\big(S_1(0)\big)$ in the dynamical plane of $G_c$, but we will pick the copy that eventually gets mapped to a sector that intersects  $S_0$. More generally, for a suitable choice of equipotential and slope factor in the $f_c$ - plane, we may assume that the open sets $S\big( \frac{\ell}{d+1}\big) = \omega^\ell S(0)$ are eventually mapped into $\widetilde{S}_\ell$, and that $S(0)$ is eventually mapped to $S_0$ (or to $S_{\frac{1}{2}}$). That is,
\begin{align*}
S_\ell(c) &=\widetilde{\phi} \Big(S\Big( \frac{\ell}{d+1}\Big)\Big) \subset \widetilde{S}_\ell  \text{ for } \ell = 0,1,...,d\\
\tilde{\phi}(V_0) &= Y_0 \setminus S_0(c) 
\end{align*}
where the domain $V_0$ is as defined in Equation~\ref{eqn:vodefn}.

  Our strategy will be to set up a quasiconformal map $\phi: V_0 \cup S(0) \longrightarrow Y_0$ that has agrees with $\widetilde{\phi}$ away from certain sectors, and conjugates $f_c$ and $F_{s(a)}$. \\
Let 
\begin{align*}
    V& = \widetilde{\phi}^{-1}(Y_0 \setminus \widetilde{S})\\
    V_1 & = f_c^{-1}(V)\\
    \mathcal{S} & = V_0 \cup S(0) \setminus V\\
    \mathcal{S}\Big(\frac{\ell}{d+1}\Big) & = \widetilde{\phi}^{-1}(\tilde{S}_\ell) \text{ for }\ell = 1,2,...,d
\end{align*}
\noindent See Figure~\ref{fig:inverse} for details. For all $z \in V_1 \setminus \Big(\mathcal{S} \cup \bigcup_\ell \mathcal{S}\big(\frac{\ell}{d+1}\big)\Big)$,
\begin{align*}
    F_{s(a)} \circ \widetilde{\phi}(z) &= \widetilde{\phi} \circ f_c(z) 
\end{align*}
Furthermore, with degree one,
\begin{align*}
    f_c\Big(\mathcal{S}\Big(\frac{\ell}{d+1}\Big) \cap V_1\Big) &= \mathcal{S} \text{ for }\ell = 1,2,...,d\\
    f_c(\mathcal{S} \cap V_1) &=\mathcal{S}
\end{align*}
For $z \in \mathcal{S}$, $z = f_c(w)$ for $d$ distinct $w \in V_1$. We can assume that $F_{s(a)} \circ \widetilde{\phi}(w)$ does not depend on the choice of preimage $w$, since $\widetilde{\phi}(w) \in S_\ell$ and $F_{s(a)}\big|_{S_\ell}$ depends on the homeomorphisms $e_\ell$ defined as in Section~\ref{section:inversedefn}, which we have freedom in choosing.\\
So we set 
\begin{align*}
    \hat{\phi}(z) & = F_{s(a)} \circ \widetilde{\phi}(w)
\end{align*}
Define 
\begin{align*}
    \phi&: V_0 \cup S(0) \longrightarrow Y_0\\
    \phi(z) & = \begin{cases}
    \widetilde{\phi}(z) & z \not \in \mathcal{S}\\
    \hat{\phi}(z) & z \in \mathcal{S}
    \end{cases}
\end{align*}
By the discussion above, for all $z \in f_c^{-1}(V_0 \cup S(0))$, 
\begin{align*}
    F_{s(a)} \circ \phi(z) & = \phi \circ f_c(z)
\end{align*}

We note that $\pi_c$ changes the angle at $\beta_c$ from $2\pi$ to $\pi$, and has zero dilitation on $K_{f_c} \setminus \{\beta_c\}$.  Also note that $\psi_c$ has zero dilitation on $\pi_c(K_{f_c})$. 

On the other hand, the cutting procedure in Section~\ref{section:inversedefn} changes the angle $\pi$ made  by the boundary of $F^L_{s(a)}$ at $0$ to the angle $2 \pi$ in the plane of $F_{s(a)}$. Lastly, note that $k_c$ has zero dilitation on $\psi_c\circ \pi_c(K_{f_c}) \setminus \{0\}$.

Combined, this information tells us that we have constructed a quasiconformal map $\phi: V_0 \cup S(0) \longrightarrow Y$
that has zero dilitation on $K_c$, and conjugates $f_c$ to $F_{s(a)}$. 

Now, if $z \in \phi(K_c)$, a point  $F^{\circ n}_{s(a)}$ in the orbit of $z$ cannot be in the interior of $S_\ell$ for any $\ell$. Therefore, the quasiconformal map that conjugates $F_{s(a)}$ to $y_{s(a)}$ has zero dilitation on $\phi(K_c)$. That is, $y_{s(a)}$ and $f_c$ are hybrid equivalent. Thus, $f_{c'}$ and $f_c$ are hybrid equivalent,  implying $c=c'$. 

In a similar manner, we can show that  $p_{\widetilde{a}}$, where $\widetilde{a} = \Phi_d \circ \Psi_d(a)$, is hybrid equivalent to $p_a$. That is, $\Phi_d \circ \Psi_d(a) = a$.

This finishes the proof of Theorem~\ref{thm:maintheorem}.

We will end with a discussion of how the image under $\Phi_d$ fits inside $\mathcal{CBO}_d$.
\begin{lemma}
$\mathcal{CBO}^{(+,-)}_d$ disconnects $\mathcal{CBO}_d$ into infinitely many components. 
\end{lemma}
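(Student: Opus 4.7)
The idea is to exhibit, for each integer $n \geq 2$, a hyperbolic component $S_n$ of $\mathcal{CBO}_d$ on which $P_{s(a)}$ carries a single attracting cycle of period $n$ that is invariant under $z \mapsto -z$ as a set (so that the two critical points $\pm s(a)\sqrt{d}$ converge to the same, symmetric cycle). By oddness, such a cycle cannot lie in either $F^R_{s(a)}$ or $F^L_{s(a)}$ alone; it necessarily intersects both sides of the separating arc $\mathcal{R}_0(s(a)) \cup \mathcal{R}_{1/2}(s(a))$. Both critical orbits then visit both $F^R_{s(a)}$ and $F^L_{s(a)}$, so $S_n \subset \mathcal{CBO}_d \setminus \mathcal{CBO}_d^{(+,-)}$.

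Nonemptiness of $S_n$ can be verified concretely for small $n$: for $n=2$, symmetric cycles satisfy $P_{s(a)}(z) = -z$, yielding an explicit open disk of attracting parameters (for $d=1$ a direct calculation gives $\{a : |2a+3| < 1\}$). For general $n$, $S_n$ is obtained by perturbing a superattracting parameter whose critical point lies on a symmetric $n$-cycle; existence of such combinatorial data and its realization by an actual parameter of $\mathcal{CBO}_d$ follows from standard Thurston-style arguments in the spirit of \cite{MR3444240}.

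The heart of the argument is to show that the $S_n$ for distinct $n$ lie in pairwise distinct connected components of $\mathcal{CBO}_d \setminus \mathcal{CBO}_d^{(+,-)}$. For this, the plan is to verify that $\partial S_n \subset \mathcal{CBO}_d^{(+,-)}$. At a boundary parameter $a^* \in \partial S_n$ the symmetric attracting cycle becomes indifferent; the parabolic implosion picture produces, locally around one petal per symmetric half of the cycle, a polynomial-like restriction of $P_{s(a^*)}$ of degree $d+1$ with a single critical point (namely $s(a^*)\sqrt{d}$). Straightening and applying the inverse construction $\Psi_d$ from Section~\ref{section:injectivity} identifies $a^*$ with $\Phi_d(c)$ for some $c \in \partial \mathcal{M}_{d+1}$, so $a^* \in \mathcal{CBO}_d^{(+,-)}$. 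Granting this, each $S_n$ is clopen in $\mathcal{CBO}_d \setminus \mathcal{CBO}_d^{(+,-)}$ (possibly together with satellite limbs attached to it), hence a union of full components; since the period is locally constant on hyperbolic components, the $S_n$ are pairwise in distinct components, giving the infinite family.

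\textbf{Main obstacle.} The delicate step is $\partial S_n \subset \mathcal{CBO}_d^{(+,-)}$: one must check that the renormalization at a parabolic boundary point of $S_n$ really produces a polynomial-like map of degree $d+1$ with a single critical point, and that the hybrid equivalent unicritical polynomial has connected filled Julia set. This extends the surgery of Section~\ref{section:injectivity} (which was set up for $a \in \mathcal{CBO}_d^{(+,-)}$ with $0$ repelling) to parabolic parameters via parabolic renormalization; the fact that the cycle is $(-\mathrm{id})$-invariant is essential here, as it guarantees the renormalized map is unicritical rather than bicritical.
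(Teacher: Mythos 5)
The construction of the components $S_n$ and the observation that a $(-\mathrm{id})$-invariant attracting cycle forces both critical orbits to cross the separating rays are fine (note, though, that a free involution on an $n$-cycle forces $n$ to be even, so you only get even periods — still infinitely many). The fatal gap is the claim $\partial S_n \subset \mathcal{CBO}_d^{(+,-)}$, on which the entire separation argument rests. It is false already in your own test case: for $d=1$ the component of the symmetric $2$-cycle is $\{|2a+3|<1\}$, whose boundary contains $a=-1$, and the paper explicitly records that $a=-1\notin\mathcal{CBO}_1^{(+,-)}$ (at $a=-1$ the origin is parabolic with multiplier $-1$ and the critical orbits alternate between the two petals, i.e.\ between the two sides). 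More structurally, $\mathcal{CBO}_d^{(+,-)}$ is by definition contained in the single component $H$ of $\mathcal{CBO}_d\setminus\D$ that meets the right half-plane, so it cannot possibly contain the full boundary of a hyperbolic component sitting near $a=-3/2$. Without the clopen claim, exhibiting infinitely many pairwise disjoint open subsets of $\mathcal{CBO}_d\setminus\mathcal{CBO}_d^{(+,-)}$ proves nothing: they could all lie in one connected component of the complement. The parabolic-renormalization repair you sketch as the "main obstacle" cannot close this gap, because the obstruction is not technical but set-theoretic — most of $\partial S_n$ simply is not in the image.

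The mechanism that actually works, and the one the paper uses, is to locate infinitely many \emph{cut points of $\mathcal{CBO}_d$ that lie inside $\mathcal{CBO}_d^{(+,-)}$}. Take $c$ a tip of $\mathcal{M}_{d+1}$ whose critical orbit lands on the $\beta$-fixed point; for $a=\Phi_d(c)$ both critical orbits of $p_a$ land on $0$, and two distinct parameter rays land at $s(a)$ in $\mathcal{MBO}_d$, so $a$ is a cut point of the connected set $\mathcal{CBO}_d$. Perturbing to the far side of such a cut point produces parameters $a'$ whose critical orbits cross over, so $a'\notin\mathcal{CBO}_d^{(+,-)}$. Each of the infinitely many tips thus contributes a pinch point of $\mathcal{CBO}_d$ inside the image, with complement points on its far side, and these far sides lie in pairwise distinct components of $\mathcal{CBO}_d\setminus\mathcal{CBO}_d^{(+,-)}$. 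If you want to keep your hyperbolic components in the picture, you would need to show that each $S_n$ lies beyond a distinct such cut point — which is exactly the cut-point argument again.
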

\begin{proof}
Let $f_c(z) = z^{d+1}+c$ be a polynomial where the orbit of $c$  contains the $\beta-$fixed point where the dynamical ray at angle $0$ lands. There are infinitely many values of $c$ in $\mathcal{M}_{d+1}$ that satisfy this condition - these are precisely the landing points of parameter rays at angles $\frac{i}{d^n}$ for $n\geq 1$ and $0 < i < d^n$. These are included in the set of ``tips'' of $\mathcal{M}_{d+1}$. 

Given such a $c$, let $a=\Phi_d(c)$. Then the orbit of both critical points $\pm \sqrt{d}$ of $p_a$ eventually lands on $0$ - that is, there exists $k$ such that $p_a^{\circ k}(\pm \sqrt{d}) = 0$.

In the dynamical plane of the monic representative $P_{s(a)}$, the dynamical rays at angles $0,\frac{1}{2}$ land at $0$. Thus there exist two angles $\theta_1,\theta_2$ such that $(2d+1)^{k-1} \theta_1  \equiv 0 $ and $(2d+1)^{k-1} \theta_2 \equiv \frac{1}{2}$, which both land at the critical value $P_{s(a)}\big(-s(a)\sqrt{d}\big)$. In the parameter plane of $\mathcal{MBO}_d$, the parameter rays at angle $\theta_1,\theta_2$ both land at $s(a)$- which means that $s(a)$ is a cut-point of $\mathcal{MBO}_d$, which is equivalent to saying that $a$ is a cut-point of $\mathcal{CBO}_d$.

Another way to show this is to see that  exists $a' \in \mathcal{CBO}_d$ close to $a$ such that  $P_{s(a')}^{
\circ k}(\sqrt{d}) \in F^L_{s(a')}$ and $P_{s(a')}^{
\circ k}(-\sqrt{d}) \in F^R_{s(a')}$. That is, the orbits of both critical points eventually ``cross over'' to the other side. So $a' \not \in \mathcal{CBO}_d^{(+,-)}$.
\end{proof}
\bibliography{bibtemplate}

\newcommand{\etalchar}[1]{$^{#1}$}
\providecommand{\bysame}{\leavevmode\hbox to3em{\hrulefill}\thinspace}
\providecommand{\MR}{\relax\ifhmode\unskip\space\fi MR }
% \MRhref is called by the amsart/book/proc definition of \MR.
\providecommand{\MRhref}[2]{%
  \href{http://www.ams.org/mathscinet-getitem?mr=#1}{#2}
}
\providecommand{\href}[2]{#2}
\begin{thebibliography}{BDH{\etalchar{+}}00}

\bibitem[BD88]{10.1007/BFb0081395}
Bodil Branner and Adrien Douady, \emph{Surgery on complex polynomials},
  Holomorphic dynamics ({M}exico, 1986), Lecture Notes in Math., vol. 1345,
  Springer, Berlin, 1988, pp.~11--72.

\bibitem[BDH{\etalchar{+}}00]{MR1785056}
Clara Bodel\'{o}n, Robert~L. Devaney, Michael Hayes, Gareth Roberts, Lisa~R.
  Goldberg, and John~H. Hubbard, \emph{Dynamical convergence of polynomials to
  the exponential}, J. Differ. Equations Appl. \textbf{6} (2000), no.~3,
  275--307.

\bibitem[BF99]{MR1757453}
Bodil Branner and N\'{u}ria Fagella, \emph{Homeomorphisms between limbs of the
  {M}andelbrot set}, J. Geom. Anal. \textbf{9} (1999), no.~3, 327--390.

\bibitem[DH82]{Hubbarditer}
Adrien Douady and John~Hamal Hubbard, \emph{It\'{e}ration des polyn\^{o}mes
  quadratiques complexes}, C. R. Acad. Sci. Paris S\'{e}r. I Math. \textbf{294}
  (1982), no.~3, 123--126.

\bibitem[DH85a]{MR812271}
A.~Douady and J.~H. Hubbard, \emph{\'{E}tude dynamique des polyn\^{o}mes
  complexes. {P}artie {II}}, Publications Math\'{e}matiques d'Orsay
  [Mathematical Publications of Orsay], vol.~85, Universit\'{e} de Paris-Sud,
  D\'{e}partement de Math\'{e}matiques, Orsay, 1985, With the collaboration of
  P. Lavaurs, Tan Lei and P. Sentenac.

\bibitem[DH85b]{Douady1985}
Adrien Douady and John~Hamal Hubbard, \emph{On the dynamics of polynomial-like
  mappings}, Ann. Sci. \'{E}cole Norm. Sup. (4) \textbf{18} (1985), no.~2,
  287--343. \MR{816367}

\bibitem[DL18]{https://doi.org/10.48550/arxiv.1808.10425}
Dzmitry Dudko and Mikhail Lyubich, \emph{Local connectivity of the mandelbrot
  set at some satellite parameters of bounded type}, 2018.

\bibitem[Dou83]{MR728980}
Adrien Douady, \emph{Syst\`emes dynamiques holomorphes}, Bourbaki seminar,
  {V}ol. 1982/83, Ast\'{e}risque, vol. 105, Soc. Math. France, Paris, 1983,
  pp.~39--63.

\bibitem[DS12]{MR2888182}
Dzmitry Dudko and Dierk Schleicher, \emph{Homeomorphisms between limbs of the
  {M}andelbrot set}, Proc. Amer. Math. Soc. \textbf{140} (2012), no.~6,
  1947--1956.

\bibitem[EL92]{AIF_1992__42_4_989_0}
Alexander Eremenko and Mikhail Lyubich, \emph{Dynamical properties of some
  classes of entire functions}, Annales de l'Institut Fourier \textbf{42}
  (1992), no.~4, 989--1020 (en).

\bibitem[EMS16]{MR3444240}
Dominik Eberlein, Sabyasachi Mukherjee, and Dierk Schleicher, \emph{Rational
  parameter rays of the multibrot sets}, Dynamical systems, number theory and
  applications, World Sci. Publ., Hackensack, NJ, 2016, pp.~49--84.

\bibitem[GK86]{goldberg_keen_1986}
L.~R. Goldberg and L.~Keen, \emph{A finiteness theorem for a dynamical class of
  entire functions}, Ergodic Theory and Dynamical Systems \textbf{6} (1986),
  no.~2, 183–192.

\bibitem[GT21]{gao2019core}
Yan Gao and Giulio Tiozzo, \emph{The core entropy for polynomials of higher
  degree}, Journal of the European Mathematical Society (2021).

\bibitem[Hub93]{MR1215974}
J.~H. Hubbard, \emph{Local connectivity of {J}ulia sets and bifurcation loci:
  three theorems of {J}.-{C}. {Y}occoz}, Topological methods in modern
  mathematics ({S}tony {B}rook, {NY}, 1991), Publish or Perish, Houston, TX,
  1993, pp.~467--511.

\bibitem[Hub16]{MR3675959}
John~Hamal Hubbard, \emph{Teichm\"{u}ller theory and applications to geometry,
  topology, and dynamics. {V}ol. 2}, Matrix Editions, Ithaca, NY, 2016, Surface
  homeomorphisms and rational functions.

\bibitem[Lei92]{lei_1992}
Tan Lei, \emph{Matings of quadratic polynomials}, Ergodic Theory and Dynamical
  Systems \textbf{12} (1992), no.~3, 589–620.

\bibitem[McM00]{McMullen19972TM}
Curtis~T. McMullen, \emph{The {M}andelbrot set is universal}, The {M}andelbrot
  set, theme and variations, London Math. Soc. Lecture Note Ser., vol. 274,
  Cambridge Univ. Press, Cambridge, 2000, pp.~1--17.

\bibitem[Mil00]{MR1755445}
John Milnor, \emph{Periodic orbits, externals rays and the {M}andelbrot set: an
  expository account}, 2000, G\'{e}om\'{e}trie complexe et syst\`emes
  dynamiques (Orsay, 1995), pp.~xiii, 277--333.

\bibitem[Mil06]{MR2193309}
J.~Milnor, \emph{Dynamics in one complex variable}, third ed., Annals of
  Mathematics Studies, vol. 160, Princeton University Press, Princeton, NJ,
  2006.

\bibitem[MnSS83]{mane1982dynamics}
R.~Ma\~{n}\'{e}, P.~Sad, and D.~Sullivan, \emph{On the dynamics of rational
  maps}, Ann. Sci. \'{E}cole Norm. Sup. (4) \textbf{16} (1983), no.~2,
  193--217.

\bibitem[Nev70]{nevanlinna1970analytic}
Rolf Nevanlinna, \emph{Analytic functions}, Die Grundlehren der mathematischen
  Wissenschaften, Band 162, Springer-Verlag, New York-Berlin, 1970, Translated
  from the second German edition by Phillip Emig.

\bibitem[Pom92]{pommerenke}
Ch. Pommerenke, \emph{Boundary behaviour of conformal maps}, Grundlehren der
  mathematischen Wissenschaften [Fundamental Principles of Mathematical
  Sciences], vol. 299, Springer-Verlag, Berlin, 1992.

\bibitem[Ree90]{Rees1990}
Mary Rees, \emph{Components of degree two hyperbolic rational maps.},
  Inventiones mathematicae \textbf{100} (1990), no.~2, 357--382.

\bibitem[RS98]{riedl}
Johannes Riedl and Dierk Schleicher, \emph{On the locus of crossed
  renormalization}, no. 1042, 1998, Problems on complex dynamical systems
  (Japanese) (Kyoto, 1997), pp.~11--31.

\bibitem[Sch04]{Schleicher98onfibers}
Dierk Schleicher, \emph{On fibers and local connectivity of {M}andelbrot and
  {M}ultibrot sets}, Fractal geometry and applications: a jubilee of
  {B}eno\^{\i}t {M}andelbrot. {P}art 1, Proc. Sympos. Pure Math., vol.~72,
  Amer. Math. Soc., Providence, RI, 2004, pp.~477--517.

\bibitem[Shi00]{Shishikura2000OnAT}
Mitsuhiro Shishikura, \emph{On a theorem of {M}. {R}ees for matings of
  polynomials}, The {M}andelbrot set, theme and variations, London Math. Soc.
  Lecture Note Ser., vol. 274, Cambridge Univ. Press, Cambridge, 2000,
  pp.~289--305.

\end{thebibliography}
\bibliographystyle{amsalpha}
\end{document}